\let\emptyset\varnothing
\numberwithin{equation}{section} 
\numberwithin{figure}{section} 
\numberwithin{table}{section} 
\newtheorem{definition}{Definition}[section] 
\newtheorem{lemma}{Lemma}[section] 
\newtheorem{theorem}{Theorem}[section] 
\newtheorem{proposition}{Proposition}[section] 
\newtheorem{corollary}{Corollary}[section]
\newtheorem{condition}{Condition}[section] 
\theoremstyle{remark} 
\newtheorem{remark}{Remark}[section] 
\theoremstyle{definition} 
\newtheorem{example}{Example}[section]
\newcommand{\doublewidetilde}[1]{{%
		\mathpalette\double@widetilde{#1}%
}}
\newcommand{\double@widetilde}[2]{%
	\sbox\z@{$\m@th#1\widetilde{#2}$}%
	\ht\z@=.9\ht\z@
	\widetilde{\box\z@}%
}
\newcommand*\bigcdot{\mathpalette\bigcdot@{.5}}
\newcommand*\bigcdot@[2]{\mathbin{\vcenter{\hbox{\scalebox{#2}{$\m@th#1\bullet$}}}}}
 \let\mathscr\relax
\DeclareFontFamily{U}{mathx}{}
\DeclareFontShape{U}{mathx}{m}{n}{<-> mathx10}{}
\DeclareSymbolFont{mathx}{U}{mathx}{m}{n}
\DeclareMathAccent{\widehat}{0}{mathx}{"70}
\DeclareMathAccent{\widecheck}{0}{mathx}{"71}
\def\ppn{\vskip 6pt \noindent }
\def\R{{\mathbb{R}}}
\def\N{{\mathbb{N}}}
\def\P{{\mathbb{P}}}
\newcommand{{\Xs}}{{\cal X}}
\newcommand{{\Ys}}{{\cal Y}}
\newcommand{{\Ls}}{{\cal L}}
\newcommand{{\Ss}}{{\cal S}}
\newcommand{{\Ms}}{{\cal M}}
\newcommand{{\Gs}}{{\cal G}}
\newcommand{{\Hs}}{{\cal H}}
\newcommand{{\Ns}}{{\cal N}}
\newcommand{{\Is}}{{\cal I}}
\newcommand{{\Vs}}{{\cal V}}
\newcommand{{\Ds}}{{\cal D}}
\newcommand{{\Bs}}{{\cal B}}
\newcommand{{\Cs}}{{\cal C}}
\newcommand{{\Rs}}{{\cal R}}
\newcommand{{\Es}}{{\cal E}}
\newcommand{{\Fs}}{{\cal F}}
\newcommand{{\Us}}{{\cal U}}
\newcommand{{\Ps}}{{\cal P}}
\newcommand{{\Xss}}{{\mathfrak{X}}}
\newcommand{{\Yss}}{{\mathfrak{Y}}}
\newcommand{{\ttheta}}{{\bm{\theta}}}
\newcommand{{\tthetaXY}}{{\bm{\theta}_{\scriptscriptstyle XY}}}
\newcommand{{\Ttheta}}{{\bm{\Theta}}}
\newcommand{{\Oomega}}{{\bm{\Omega}}}
\newcommand{{\oomega}}{{\bm{\omega}}}
\newcommand{{\mmu}}{{\bm{\mu}}}
\newcommand{{\aalpha}}{{\bm{\alpha}}}
\newcommand{{\bbeta}}{{\bm{\beta}}}
\newcommand{{\ggamma}}{{\bm{\gamma}}}
\newcommand{{\Ssigma}}{{\bm{\Sigma}}}
\newcommand{{\Sss}}{{\bm{\Ss}}}
\newcommand{{\pp}}{{\mathbf p}}
\newcommand{{\qq}}{{\mathbf q}}
\newcommand{{\ww}}{{\mathbf w}}
\newcommand{{\mm}}{{\mathbf m}}
\newcommand{{\ii}}{{\bm i}}
\newcommand{{\jj}}{{\bm j}}
\newcommand{{\uu}}{{\mathbf u}}
\newcommand{{\vv}}{{\mathbf v}}
\newcommand{{\ppi}}{{\bm{\pi}}}
\newcommand{{\phhi}}{{\bm{\phi}}}
\newcommand{{\pssi}}{{\bm{\psi}}}
\newcommand{{\XX}}{{\mathbf X}}
\newcommand{{\UU}}{{\mathbf U}}
\newcommand{{\BB}}{{\mathbf B}}
\newcommand{{\DD}}{{\mathbf D}}
\newcommand{{\KK}}{{\mathbf K}}
\newcommand{{\WW}}{{\mathbf W}}
\newcommand{{\HH}}{{\mathbf H}}
\newcommand{{\II}}{{\mathbf I}}
\newcommand{{\PP}}{{\mathbf P}}
\newcommand{{\yy}}{{\mathbf y}}
\newcommand{{\ee}}{{\mathbf e}}
\newcommand{{\ab}}{{\mathbf a}}
\newcommand{{\dd}}{{\mathbf d}}
\newcommand{{\zero}}{{\mathbf 0}}
\newcommand{{\uno}}{{\mathbf 1}}
\newcommand{{\dep}}{{\mathfrak{D}}}
\newcommand{{\DDelta}}{{\bm \Delta}}
\newcommand{{\pXY}}{{\pp_{{\scriptscriptstyle XY}}}}
\newcommand{{\pXYstar}}{{\pp^*_{{\scriptscriptstyle XY}}}}
\newcommand{{\SXY}}{{\Ss_{{\scriptscriptstyle XY}}}}
\newcommand{{\pX}}{{\pp_{{\scriptscriptstyle {\bm X}}}}}
\newcommand{{\pY}}{{\pp_{{\scriptscriptstyle Y}}}}
\newcommand{{\gammaXY}}{{{\bm{\gamma}}_{{\scriptscriptstyle XY}}}}
\newcommand{{\gammaX}}{{{\bm{\gamma}}_{{\scriptscriptstyle X}}}}
\newcommand{{\gammaY}}{{{\bm{\gamma}}_{{\scriptscriptstyle Y}}}}
\newcommand{{\hatpXY}}{{{\widehat{\pp}_{{\scriptscriptstyle XY};n}}}}
\newcommand{{\hatpX}}{{{\widehat{\pp}_{{\scriptscriptstyle X};n}}}}
\newcommand{{\hatpY}}{{{\widehat{\pp}_{{\scriptscriptstyle Y};n}}}}
\newcommand{{\tildehatpXY}}{{{\widetilde{\widehat{\pp}}_{{\scriptscriptstyle XY};n}}}}
\newcommand{{\hatgammaXY}}{{{\widehat{\ggamma}_{{\scriptscriptstyle XY};n}}}}
\newcommand{{\hatthetaXY}}{{{\widehat{\ttheta}_{{\scriptscriptstyle XY};n}}}}
\newcommand{{\checkpXY}}{{{\widecheck{\pp}_{{\scriptscriptstyle XY};n}}}}
\newcommand{{\tildepXY}}{{{\widetilde{\pp}_{{\scriptscriptstyle XY}}}}}
\newcommand{{\tildegammaXY}}{{{\widetilde{\ggamma}_{{\scriptscriptstyle XY}}}}}
\newcommand{{\tildehatgammaXY}}{{{\widetilde{\widehat{\ggamma}}_{{\scriptscriptstyle XY};n}}}}
\newcommand{\transp}[1]{\ensuremath{#1^{\scriptscriptstyle \top}}}
\newcommand{\vect}{\mathrm{vec}}
\newcommand\indep{\protect\mathpalette{\protect\independenT}{\perp}}
\def\independenT#1#2{\mathrel{\rlap{$#1#2$}\mkern2mu{#1#2}}}
\newcommand{\indic}[1]{
	\hbox{${\it 1}\hskip -4.5pt I_{\{ #1 \}}$}
}
\newcommand{{\toL}}{{\overset{\mathcal{L}}{\longrightarrow}\ }}
\newcommand{{\toas}}{{\ \overset{\text{a.s.}}{\longrightarrow}\ }}
\newcommand{{\dou}}{$\leadsto$\ }
\DeclareMathOperator{\diag}{diag}
\DeclareMathOperator{\Supp}{Supp}
\newcommand{\pkg}[1]{{\normalfont\fontseries{b}\selectfont #1}}
\let\proglang=\textsf
\let\code=\texttt
\newcommand{\un}{\underline}
\begin{document}

\setlength{\belowdisplayskip}{5pt} \setlength{\belowdisplayshortskip}{3pt}
\setlength{\abovedisplayskip}{5pt} \setlength{\abovedisplayshortskip}{0pt}

\title{The empirical discrete copula process}

\author[1]{\textsc{Gery Geenens}\thanks{Corresponding author: {\tt ggeenens@unsw.edu.au}.}}
\author[2]{\textsc{Ivan Kojadinovic}\thanks{{\tt ivan.kojadinovic@univ-pau.fr}.}}
\author[2,3]{\textsc{Tommaso Martini}\thanks{{\tt tommaso.martini@unito.it}.}}
\affil[1]{School of Mathematics and Statistics, UNSW Sydney, Australia}
\affil[2]{Laboratoire de math\'ematiques et applications, Universit\'e de Pau et des Pays de l'Adour, France}
\affil[3]{{Dipartimento Interateneo di Scienze, Progetto e Politiche del Territorio, Universita degli Studi di Torino, Italy}}

\date{\today}
\maketitle
\thispagestyle{empty} 


\begin{abstract} \noindent This paper develops a general inferential framework for discrete copulas on finite supports in any dimension. The copula of a multivariate discrete distribution is defined as Csiszar's $\Is$‐projection (i.e., the minimum-Kullback–Leibler divergence projection) of its joint probability array onto the polytope of uniform-margins probability arrays of the same size, and its empirical estimator is obtained by applying that same projection to the array of empirical frequencies observed on the sample. Under the assumption of random sampling, strong consistency and $\sqrt{n}$-asymptotic normality of the empirical copula array is established, with an explicit `sandwich' form for its covariance. The theory is illustrated by deriving the large-sample distribution of Yule's concordance coefficient (the natural analogue of Spearman's $\rho_\text{S}$ for bivariate discrete distributions) and by constructing a test for quasi-independence in multivariate contingency tables.  Our results not only complete the foundations of discrete-copula inference but also connect directly to entropically regularised optimal transport and other minimum-divergence problems.
	
\end{abstract}

\section{Introduction}\label{sec:intro}

The statistical theory and practice of characterising and quantifying dependence between random variables has long been dominated by Pearson's product-moment correlation, which virtually served as the only measure of dependence during most of the 20th century. It has now become well understood that Pearson's correlation, inherently anchored in the ideal world of multivariate Gaussian distributions, is often unsuitable for `the real world', where data are seldom Gaussian. As a result, the early 21st century has witnessed the resurgence of the search for modern, robust and flexible sets of tools for dependence modelling. A major step in that direction has surely been the advent of copulas.

\ppn The copula approach stems from a representation theorem due to \cite{Sklar59}, which suggests that the dependence structure in any random vector $(X_1,\ldots,X_d)$ is encapsulated in a continuous $d$-variate function $C_{X_1,\ldots,X_d}$ said {\it copula}, and this in a way insensitive to the individual behaviour of each $X_k$ ($k=1,\ldots,d$). This property, often referred to as `{\it margin-freeness}', allows the construction of very flexible multivariate models by separately specifying marginal distributions for the $X_k$'s ($k=1,\ldots,d$) on one hand and the dependence/copula on the other, and is often presented as the main benefit offered by the copula approach \citep[Section 1.6]{Joe2015}. What is often overlooked, however, is that this appealing property arises only when all the variables $X_k$ are continuous. Indeed, in the presence of some elements of discreteness, `{\it everything that can go wrong, will go wrong}', as put by \cite{Embrechts09}. The major stumbling block is that copulas lose their margin-free nature outside the `all-continuous' setting, hence their main reason-of-being as Sklar's formalism  does not allow neat separation between marginals and dependence anymore \citep{Geenens23short}.

\ppn To reconcile copulas with the discrete setting, \cite{Geenens2020} argued that they should not be imprisoned in \citet{Sklar59}'s representation, but should be understood from a more fundamental perspective as the representative of an equivalence class of distributions sharing the same dependence structure. Inspired by century-old ideas of Udny Yule \citep{Yule12}, that different interpretation led to an alternative copula construction where all the pleasant properties of copulas for modelling dependence (in particular: `margin-freeness') could smoothly carry over to the discrete setting -- see Section \ref{sec:bivdiscrcop}. \cite{Geenens2020} only laid the foundations of this `discrete copula methodology' but did not investigate empirical estimation or inference for the induced discrete copula models. That task was taken up in \cite{Kojadinovic24}, who addressed a variety of scenarios, from nonparametric, semi-parametric and parametric estimation of discrete copula models to goodness-of-fit tests, but in the bivariate case ($d=2$) only and under a possibly restrictive rectangular support condition. 

\ppn In this paper, we address the inference problem for discrete copula models in any dimension $d \in \N$, under assumptions as broad as possible. The only significant restriction is that we considered distributions with finite supports -- the case of (countably) infinite support presents unique challenges, which will be investigated in a subsequent paper. Our main theoretical contribution is Theorem \ref{thm:gammahat} below, which can be regarded as the discrete analogue of the weak convergence of the empirical copula process in the classical `all-continuous' case \citep{Fermanian04,Segers12}. That result is arguably the theoretical cornerstone of all inferential results for continuous copulas, and the same essential role is expected for Theorem \ref{thm:gammahat} in discrete settings. 

\ppn The paper is organised as follows. Section \ref{sec:bivdiscrcop} provides a brief refresher of \cite{Geenens2020}'s discrete copula in the bivariate case, which is aimed at facilitating the transition towards the novel multidimensional discrete copula construction introduced in Section \ref{sec:dvarcop}. Section \ref{sec:empcop} studies a nonparametric estimator of $d$-dimensional discrete copulas, and exposes its asymptotic properties which include the above-mentioned asymptotic normality statement of central importance (Theorem  \ref{thm:gammahat}). Section \ref{sec:ill} illustrates the usefulness of the previous result by displaying some examples of application. In particular, the asymptotic distribution of Yule's coefficient of concordance -- which may be thought of as the discrete version of Spearman's $\rho$ \citep[Section 6.6]{Geenens2020}  -- is derived in Section \ref{subsec:Yule}, while a test procedure for the hypothesis of {quasi-independence} between discrete variables is proposed in Section \ref{subsec:quasitest}. Section \ref{sec:ccl} concludes.

\section{Bivariate discrete copulas} \label{sec:bivdiscrcop}

Consider two finite sets $\Xs_1 = \{x^{(1)}_1,x^{(1)}_2,\ldots,x^{(1)}_{r_1}\} \subset \R$ and $\Xs_2=\{x^{(2)}_1,x_2^{(2)},\ldots,x^{(2)}_{r_2}\} \subset \R$, with $2 \leq r_1,r_2 < \infty$ and $x^{(1)}_1 < x^{(1)}_2 < \ldots < x^{(1)}_{r_1}$, $x^{(2)}_1 < x_2^{(2)} < \ldots < x^{(2)}_{r_2}$. The distribution of any bivariate discrete vector $(X_1,X_2)$ taking values in  $\Xs=\Xs_1 \times \Xs_2$ is a probability measure $P$ which may be unequivocally characterised by its probability mass function (pmf); viz.\ the function $\mathsf{p}: \Xs \to [0,1]$ which, to any $(x_{i_1}^{(1)},x_{i_2}^{(2)}) \in \Xs$ ($(i_1,i_2) \in \{1,\ldots,r_1\} \times \{1,\ldots,r_2\}$) assigns the probability $\mathsf{p}(x_{i_1}^{(1)},x_{i_2}^{(2)}) = P(\{x_{i_1}^{(1)},x_{i_2}^{(2)}\}) = \P(X_1=x_{i_1}^{(1)},X_2=x_{i_2}^{(2)})$. Such pmf can be broken down into three elements: $(i)$ what will be called the {\it probability array}
\begin{equation} \pp = \begin{pmatrix}
		p_{11} & p_{12} &   \ldots & p_{1 r_2} \\
		p_{21} & p_{22} &   \ldots & p_{2 r_2} \\
		\vdots & \vdots &  \ddots  & \vdots \\
		p_{r_1 1} & p_{r_1 2} &   \ldots & p_{r_1 r_2} 
	\end{pmatrix} \in \R^{r_1 \times r_2},  \label{eqn:ppRS} \end{equation}
where for $(i_1,i_2) \in \{1,\ldots,r_1\} \times \{1,\ldots,r_2\}$, $p_{i_1 i_2}  \doteq \mathsf{p}(x_{i_1}^{(1)},x_{i_2}^{(2)})$; $(ii)$ a {\it rank function} $\Phi_1: \Xs_1 \to \{1,\ldots,r_1\}$ and $(iii)$ a rank function $\Phi_2: \Xs_2 \to \{1,\ldots,r_2\}$; such that $\Phi_\ell(x^{(\ell)}_{i_\ell}) = i_\ell$, for $\ell = 1,2$ and $i_\ell \in \{1,\ldots,r_\ell\}$. We have $\mathsf{p}(x_{i_1}^{(1)},x_{i_2}^{(2)}) =  p_{\Phi_1(x^{(1)}_{i_1})\Phi_2(x^{(2)}_{i_2})}$, or equivalently, as $\Phi_1$ and $\Phi_2$ are bijective, $p_{i_1 i_2} = \mathsf{p}\left(\Phi_1^{-1}(i_1),  \Phi_2^{-1}(i_2) \right)$, for all $(i_1,i_2)$. 

\ppn Unlike the pmf, the probability array $\pp$ does not carry any information about the initial domains $\Xs_1$ and $\Xs_2$, which are only determined by $\Phi_1, \Phi_2$. In fact, $\pp$ can be understood as what have in common the distributions of all vectors $(X_1',X_2')=(\Psi_1(X_1),\Psi_2(X_2))$ where $\Psi_1:\Xs_1 \to \Xs'_1$ and $\Psi_2: \Xs_2 \to \Xs_2'$ are any two increasing functions, and $\Xs'_1$ and $\Xs'_2$ are arbitrary sets of cardinalities $r_1$ and $r_2$, respectively. This follows from the fact that the ranks returned by $\Phi_\ell$ ($\ell =1,2$) are maximally invariant under the above group of increasing transformations \citep[Example 6.2.2(ii)]{Lehmann05}. Thus, if a representation or quantification of the dependence in the vector $(X_1,X_2)$ is to be invariant under such increasing marginal transformations -- which is precisely what is called `margin-free' in the copula literature -- then it must be based on $\pp$ and on $\pp$ only. Indeed, the rank functions (and their inverses) only serve a purpose of labelling the possible values of $X_1$ and $X_2$ but should not play any further role in understanding their joint behaviour and interactions.

\ppn We will denote $\bm p^{(1)} = (p_{1\bullet},p_{2\bullet},\ldots,p_{r_1 \bullet})$ and $\bm p^{(2)} = (p_{\bullet 1},p_{ \bullet 2},\ldots,p_{ \bullet r_2})$ the {\it margins} of $\pp$: $p_{i_1 \bullet} = \sum_{i_2=1}^{r_2} p_{i_1 i_2} = \P(X_1 = x_{i_1}^{(1)})$ and $p_{\bullet i_2}= \sum_{i_1=1}^{r_1} p_{i_1 i_2} = \P(X_2 = x_{i_2}^{(2)})$. Without loss of generality, it will be assumed throughout that  $p_{i_1 \bullet} p_{\bullet i_2} >0 \ \forall (i_1,i_2)$; i.e., no row or column of $\pp$ is identically null (otherwise these could simply be deleted entirely). Finally, we will define the {\it support} of $\pp$ as $\Ss_\pp = \Supp(\pp) =  \{(i_1,i_2)\in \{1,\ldots,r_1\}\times \{1,\ldots,r_2\}: p_{i_1 i_2} > 0 \}$.

\ppn Now, let $\Gamma$ be the set of $(r_1 \times r_2)$-probability arrays (\ref{eqn:ppRS}) whose margins are uniform; viz.
\[ \Gamma \doteq \left\{\ggamma \in \R^{r_1 \times r_2}:  \forall i_1, i_2,\  \gamma_{i_1 i_2} \geq 0, \gamma_{i_1\bullet} = \frac{1}{r_1}, \gamma_{\bullet i_2} = \frac{1}{r_2} \right\}. \]
Note that $\Gamma$ is a convex subset of a vector space of dimension $(r_1-1)(r_2-1)$, known as a generalised {\it Birckhoff polytope} \citep{Brualdi06,Perrone2019}. For a given $\pp$, denote $\Gamma(\Ss_\pp) = \{\ggamma \in \Gamma: \Supp(\ggamma) = \Ss_\pp \} \subset \Gamma$. Under certain conditions on the shape of $\Ss_\pp$ (essentially, that the null entries of (\ref{eqn:ppRS}) do not form a `bulky set' in a given sense), one can show that $\Gamma(\Ss_\pp) \neq \emptyset$ (\citealp{Ryser60}; \citealp{Bacharach65}; \citealp{Sinkhorn67} -- see also \citealp[Theorem 8.1.7]{Brualdi06}). When it is the case, it appears from \citet[Theorem 6.1]{Geenens2020} that there exists a {\it unique} probability array $\ggamma_\pp \in \Gamma(\Ss_\pp) $ such that 
\begin{equation} \ggamma_\pp  \doteq \arg \min_{\ggamma \in \Gamma} \sum_{(i_1,i_2) \in \Supp(\pp)} \gamma_{i_1 i_2} \log \frac{\gamma_{i_1 i_2}}{p_{i_1 i_2}} = \arg \min_{\ggamma \in \Gamma} \Is(\ggamma||\pp), \label{eqn:defcopKL} \end{equation}
where $\Is(\cdot\|\cdot)$ will be called `$\Is$-divergence' here following \cite{Csiszar75}, but also goes by `Kullback-Leibler divergence' after \cite{Kullback51} or `Mutual Entropy' in the information theory literature \citep{Cover06}. Thus, $\ggamma_\pp \in \Gamma(\Ss_\pp)$ is the element of $\Gamma$ the closest to $\pp$ in the $\Is$-divergence sense. Furthermore, results of \citet[Section 3]{Csiszar75} and \cite{Ruschendorf93} on `$\Is$-projections' such as (\ref{eqn:defcopKL}) characterise $\ggamma_\pp$ as the unique probability array of $\Gamma$ satisfying
\begin{equation} \gamma_{i_1 i_2} = p_{i_1 i_2} \alpha_{i_1} \beta_{i_2}  \quad \forall (i_1, i_2) \label{eqn:alphabeta} \end{equation} 
for two sets of positive constants $\{\alpha_{i_1}\}_{i_1=1}^{r_1}$ and $\{\beta_{i_2}\}_{i_2=1}^{r_2}$. The probability measure $G_\pp$, reconstructed from the probability (copula) array $\ggamma_\pp$ with the inverse rank functions $\Phi^{-1}_\ell(i_\ell) = \frac{i_\ell}{r_\ell+1}$ ($i_\ell=1,\ldots,r_\ell$, $\ell = 1,2$), captures the distribution of a vector $(U_1,U_2)$ taking values in $\{\frac{1}{r_1+1},\ldots,\frac{r_1}{r_1+1} \} \times \{\frac{1}{r_2+1},\ldots,\frac{r_2}{r_2+1} \}$, such that $\P(U_1 = \frac{i_1}{r_1+1}) = \frac{1}{r_1}$ and $\P(U_2 = \frac{i_2}{r_2+1}) = \frac{1}{r_2}$ (discrete uniform marginal distributions) and, from (\ref{eqn:alphabeta}),
\begin{equation} \P\left(U_1=\frac{i_1}{r_1+1} , U_2 = \frac{i_2}{r_2+1}\right) = \P\left(X_1 = x^{(1)}_{i_1}, X_2= x^{(2)}_{i_2}\right)\alpha_{i_1} \beta_{i_2}. \label{eqn:alphabeta2} \end{equation}

\ppn In the case of a rectangular support for $(X_1,X_2)$ (i.e., all $p_{i_1 i_2} > 0$ in $\pp$), then (\ref{eqn:alphabeta})-(\ref{eqn:alphabeta2}) is equivalent to saying that $(X_1,X_2) \sim P$ and $(U_1,U_2) \sim G_\pp$ admit the same sets of $(r_1-1)(r_2-1)$ odds ratios
\begin{equation*} 
	\frac{\gamma_{11}\gamma_{i_1i_2}}{\gamma_{i_11} \gamma_{1i_2}} = \frac{p_{11} \alpha_1 \beta_1p_{i_1i_2} \alpha_{i_1} \beta_{i_2}}{p_{i_1 1}\alpha_{i_1} \beta_1 p_{1i_2}\alpha_1 \beta_{i_2}}= \frac{p_{11}p_{i_1i_2}}{p_{i_11} p_{1i_2}}  \quad \forall (i_1,i_2) \in \{2,\ldots,r_1\} \times \{2,\ldots,r_2\} , \label{eqn:ORs} \end{equation*}
which unequivocally describe the {\it dependence structure} in such bivariate discrete distributions (\citealp[Chapter 2]{Bishop75}; \citealp[Chapter 2]{Agresti13}; \citealp[Section 4.3.1(a)]{Geenens22}). Thus, $P$ and $G_\pp$ admit the same dependence structure.

\ppn More generally, the number and layout of null entries in $\pp$ form an essential element of the dependence structure of the vector $(X_1,X_2)$. Indeed, the presence of any $p_{i_1i_2} = 0$ in (\ref{eqn:ppRS}) ({\it non-rectangular support})  automatically prevents independence between $X_1$ and $X_2$.\footnote{Algebraically, $0 = p_{i_1i_2}  \neq p_{i_1 \bullet} p_{\bullet i_2}$, as it has been assumed $p_{i_1 \bullet}, p_{\bullet i_2} >0$ for all $i_1,i_2$. Intuitively, the fact that some values of $X_1$ and $X_2$ are not allowed simultaneously can be understood as a strong form of interaction, indeed.} This contribution to the dependence exclusively due to the shape of $\Ss_\pp$ -- what \cite{Holland86} called `{\it regional dependence}' -- is maintained in $\ggamma_\pp$ through (\ref{eqn:defcopKL}) as $\ggamma_\pp \in \Gamma(\Ss_\pp)$. The rest of the dependence structure (not of `regional' nature) remains unequivocally described by a certain number of odds-ratio-like quantities which essentially `avoid' the null entries $p_{i_1i_2} =0$; see Example \ref{ex:example}  below for a simple illustration, or \citet[Section 4.3.1(b)]{Geenens22} for full details. Again, these odds-ratios are maintained through (\ref{eqn:alphabeta}), and thus shared by $\pp$ and $\ggamma_\pp$. It follows that $P$ and $G_\pp$ admit the same dependence structure in the case of non-rectangular support as well.

\ppn In fact, (\ref{eqn:alphabeta2}) is what \citet[Theorem 4.1]{Geenens22} established as a necessary and sufficient condition for $(U_1,U_2) \sim G_\pp$ and $(X_1,X_2) \sim P$ to share the same dependence structure. Thus, $(U_1,U_2)$ is the random vector whose dependence structure is identical to that of $(X_1,X_2)$, but with (discrete) uniform marginals on $\{\frac{1}{r_1+1},\ldots,\frac{r_1}{r_1+1}\}$ and $\{\frac{1}{r_2+1},\ldots,\frac{r_2}{r_2+1}\}$ -- and this, irrespective of the initial marginal domains and distributions of $(X_1,X_2)$. In other words, $G_\pp$ is a margin-free distribution-morphic representation of that dependence structure, in a way obviously reminiscent of the (unique) continuous copula $C_{X_1X_2}$ of $(X_1,X_2)$ if it was a continuous bivariate vector. This justifies to call $G_\pp$ the {\it discrete copula} measure of $(X_1,X_2) \sim P$ \citep[Section 6]{Geenens2020}. By analogy with the continuous case, we will call `discrete copula' the joint cumulative distribution function of $(U_1,U_2) \sim G_\pp$ and `copula pmf' its probability mass function, as in \cite{Geenens2020}. Accordingly, we will call $\ggamma_\pp$ the {\it copula array} of $\pp$.

\begin{remark} \label{rem:mincond} The discrete copula construction is described under the assumption $\Gamma(\Ss_\pp) \neq \emptyset$, which is the case when the null entries of $\pp$ are not too prominent in a precised sense.\footnote{In language of matrix analysis, this is equivalent to the matrix $\pp$ in (\ref{eqn:ppRS}) being fully indecomposable; see \cite{Brualdi06}.} We can wonder what happens if that condition is not met. There are two cases to distinguish: $(a)$ $\Gamma(\Ss') = \emptyset$ for all $\Ss' \subseteq \Ss_\pp$; and $(b)$ $\Gamma(\Ss_\pp) = \emptyset$ but there exists $\Ss' \subset \Ss_\pp$ such that $\Gamma(\Ss') \neq \emptyset$. We discuss this two cases in Appendix \ref{app:examples}, through examples. For reasons clarified there, we refer to case $(a)$ as `{\it exclusive regional dependence}' and case $(b)$ as `{\it critical regional dependence}'.\qed 
\end{remark}

\noindent This discrete copula construction shares with Sklar's formalism in continuous settings its principal strength: it allows for the development of highly flexible bivariate models in which the marginal distributions and the dependence structure can be specified separately -- but it does so for \emph{discrete} random vectors. Importantly, under the above assumption $\Gamma(\Ss_\pp) \neq \emptyset$, the transformation (\ref{eqn:defcopKL}) is reversible: if $\gamma_\pp$ is the $\Is$-projection of $\pp$ onto the class $\Gamma$ of probability arrays with uniform margins, then $\pp$ is the $\Is$-projection of $\gamma_\pp$ onto the corresponding \emph{Fr\'echet class} of all probability arrays sharing the same margins as $\pp$ -- Proposition \ref{prop:sklar:like} below formalises this duality in full generality for the $d$-dimensional case. Furthermore, $\gamma_\pp$ can be $\Is$-projected onto other Fr\'echet classes (all probability arrays sharing other margins) to produce new probability arrays that retain the dependence structure of $\pp$ while altering the marginals. This process captures the essential idea of copula modelling, and we expect that many problems involving continuous random vectors that have benefited from Sklar’s construction (see \cite{Genest24} for a recent comprehensive review) will have natural analogues in the discrete setting via these discrete copulas. A fundamental step toward realising this potential is to develop empirical methods for estimating discrete copulas from data, and this forms the focus of the present paper.

\ppn The next section will describe -- in detail, as this was not covered in the previous literature -- the extension of the above bivariate construction to any arbitrary dimension $d \in \N$. 

\section{$d$-variate discrete copulas} \label{sec:dvarcop}

\subsection{$d$-dimensional probability arrays}

For simplicity, below we denote $[s] \doteq \{1,\ldots,s\}$ for any $s \in \N$. For given $d \in \N$ and $r_1, \dots, r_d \in \N$, let $\R^{r_1 \times \cdots \times r_d}$ be the set of all $d$-dimensional arrays whose dimension sizes are $r_1, \dots, r_d$.\footnote{Note the distinction between $\R^{r_1 \times \cdots \times r_d}$ and $\R^{\prod_{\ell = 1}^d r_\ell}$, the set of all $\left(\prod_{\ell = 1}^d r_\ell\right)$-dimensional real vectors.} Any array $\ab \in \R^{r_1 \times \cdots \times r_d}$ can be expressed explicitly `entrywise' in terms of its elements as $(a_{\bm i})_{\bm i \in \Rs}$, where 
\begin{equation} \Rs = [r_1] \times \dots \times [r_d] = \left\{ {\bm i} = (i_1, i_2, \dots i_d) : i_1 \in [r_1], \dots, i_d \in [r_d] \right\}. \label{eq:Rs} \end{equation}
The {\it support} of such array is then defined as $\Supp(\ab) = \left\{ \bm i \in \Rs : a_{\bm i} \neq 0 \right\}$. When summing the elements of $\ab$ over all dimensions but one, we obtain a one-dimensional array (that is, a vector). Specifically, for any $\ell \in [d]$, we set 
\begin{equation} a^{(\ell)}_i = \sum_{ i_1 \in [r_1]} \dots \sum_{ i_{\ell-1} \in [r_{\ell-1}]} \sum_{ i_{\ell+1} \in [r_{\ell+1}]} \dots \sum_{ i_d \in [r_d]} a_{i_1, \dots, i_{\ell-1}, i,  i_{\ell+1}, \dots ,i_d}, \qquad i \in [r_{\ell}], \label{eqn:amarg} \end{equation}
corresponding to the $r_\ell$ components of the marginal sum vector when summing the elements of $\ab$ over all the dimensions but the $\ell$th one. We will write $\bm a^{(\ell)} = (a^{(\ell)}_1,\ldots,a^{(\ell)}_{r_\ell})$ and call it the $\ell^\text{th}$ margin of $\ab$. 

\ppn For any $\ell \in [d]$, let $\Xs_\ell = \{x^{(\ell)}_1,\dots, x^{(\ell)}_{r_\ell}\}$ with $x^{(\ell)}_1 < \dots < x^{(\ell)}_{r_\ell}$. To any probability measure $P$ defined on $\Xs = \Xs_1 \times  \cdots \times \Xs_d$, we may associate $(i)$ $d$ rank functions $\Phi_\ell: \Xs_\ell \to \{1,\ldots,r_\ell\}$ ($\ell \in [d]$) such that $\Phi_\ell(x^{(\ell)}_{i_\ell}) = i_\ell$ ($i_\ell \in [r_\ell])$, and $(ii)$ a {probability array}; i.e., the element $\pp$ of $\R^{r_1 \times \cdots \times r_d}$ defined by
$$
p_{\bm i} = P \left( \left\{ (x_{i_1}^{(1)}, \dots, x_{i_d}^{(d)}) \right\} \right), \qquad \bm i = (i_1, i_2, \dots i_d) \in \Rs.
$$
Like in Section \ref{sec:bivdiscrcop}, the probability array $\pp$ represents the constituent of the distribution of the random vector $\XX=(X_1,\ldots,X_d) \sim P$ which remains invariant under increasing transformations of its individual components. Consequently, any attempt at describing the dependence structure of $(X_1,\ldots,X_d)$ in a `margin-free' way should be based on $\pp$ and on $\pp$ only. 

\ppn Probability arrays are necessarily such that $ p_{\bm i} \geq 0$ ($\forall \bm i \in \Rs$) and $\sum_{\bm i \in \Rs} p_{\bm i} = 1$.   
We define the $d$ margins of a probability array $\pp$ to be the $d$ univariate probability arrays (i.e., vectors) $\bm p^{(1)}, \dots, \bm p^{(d)}$, where ${\bm p}^{(\ell)} = (p^{(\ell)}_1, \dots, p^{(\ell)}_{r_\ell} ) \in \R^{r_\ell}$ ($\ell \in [d]$) and $p^{(\ell)}_i$ ($i \in [r_{\ell}]$) is defined as in (\ref{eqn:amarg}). Without loss of generality, it will be assumed throughout that any probability array $\pp$ under consideration belongs to the set
\begin{equation} \Ps = \left\{ \pp \in \R^{r_1 \times \cdots \times r_d} :  p_{\bm i} \in [0,1], \, \bm i \in \Rs, \, \sum_{\bm i \in \Rs} p_{\bm i} = 1 \text{ and } p^{(\ell)}_i> 0  \text{ for all } i \in [r_{\ell}] \text{ and } \ell \in [d] \right\}, \label{eq:Ps} \end{equation}
that is, probability arrays which do not include any identically null slice. If it happens that $p^{(\ell)}_i = 0$ for some $\ell \in [d],i \in [r_{\ell}]$, it suffices to remove the element $x^{(\ell)}_i$ from $\Xs_\ell$ and decrement $r_\ell$ by 1 from the outset.

\subsection{Fréchet classes and $\Is$-projections} \label{subsec:IprojFrech}

Consider $d$ univariate positive probability arrays (that is, vectors whose entries are positive and sum to one) $\bm q^{(1)}, \dots, \bm q^{(d)}$ in $\R^{r_1},\dots,\R^{r_d}$, respectively, and call 
\begin{equation*} \Fs(\bm q^{(1)}, \dots, \bm q^{(d)}) = \{ \pp \in \Ps : \bm p^{(\ell)} = \bm q^{(\ell)}, \  \forall\,\ell \in [d] \} \label{eq:frechet} \end{equation*}
the {\it Fr\'echet class} of probability arrays in $\R^{r_1 \times \cdots \times r_d}$ with margins $\bm q^{(1)}, \dots, \bm q^{(d)}$. Any such class $\Fs(\bm q^{(1)}, \dots, \bm q^{(d)})$ is necessarily a non-empty subset of $\Ps \subset \R^{r_1 \times \cdots \times r_d}$ \eqref{eq:Ps}, as it contains at least the `independence' array denoted $[\bm q^{(1)} \bigcdot \dots \bigcdot \bm q^{(d)}]$ and defined as
\begin{equation} [\bm q^{(1)} \bigcdot \dots \bigcdot \bm q^{(d)}]_{\bm i} = \prod_{\ell = 1}^d q_{i_\ell}^{(\ell)}, \quad \bm i = (i_1, i_2, \dots i_d) \in \Rs. \label{eqn:indeparray} \end{equation} 
It is easy to see that any such Fr\'echet class $\Fs(\bm q^{(1)}, \dots, \bm q^{(d)})$ is convex and closed in total variation.\footnote{For $\pp, \pp' \in  \Fs(\bm q^{(1)}, \dots, \bm q^{(d)})$ and $w \in [0,1]$, $[w \pp + (1-w) \pp']^{(\ell)} = w \bm p^{(\ell)} + (1-w) \bm p'^{(\ell)} =w \bm q^{(\ell)} + (1-w) \bm q^{(\ell)} =  \bm q^{(\ell)}$ $\forall \ell \in [d]$, so  $w \pp + (1-w) \pp' \in \Fs(\bm q^{(1)}, \dots, \bm q^{(d)})$. Also, the total variation distance between two probability measures $P$ and $P'$ on $\Xs$ is equivalent to the $\ell_1$-distance between their probability arrays $\pp$ and $\pp'$. Let $\{\pp_n\}_{n \geq 1} \in \Fs(\bm q^{(1)}, \dots, \bm q^{(d)})$ be a sequence of probability arrays converging to some probability array $\pp$. A necessary condition for this is that $\bm p_n^{(\ell)}$ converges to $\bm p^{(\ell)}$ $\forall \ell \in [d]$; and as $\bm p_n^{(\ell)}  = \bm q^{(\ell)}$ $\forall n \geq 1$, $\bm p^{(\ell)} = \bm q^{(\ell)}$ and $\pp \in \Fs(\bm q^{(1)}, \dots, \bm q^{(d)})$.   }

\ppn For any two probability arrays $\pp, \qq \in \Ps$, the $\Is$-divergence of $\qq$ with respect to $\pp$ is defined as
\begin{equation} \Is(\qq \| \pp) = \begin{cases}
		\displaystyle \sum_{\bm i \in \Supp(\pp)} q_{\bm i} \log{\frac{q_{\bm i}}{p_{\bm i}}}  &\text{ if }\Supp{(\qq)} \subseteq \Supp{(\pp)},\\
		\infty&\text{ otherwise,}
	\end{cases} \label{eq:KL:divergence} \end{equation}
with the convention that $0 \log 0 = 0$. We say that $\qq^* \in \Fs(\bm q^{(1)}, \dots, \bm q^{(d)})$ is an $\Is$-projection of $\pp$ on $\Fs(\bm q^{(1)}, \dots, \bm q^{(d)})$ if
\begin{equation*}
	\Is(\qq^* \| \pp) = \inf_{\qq \in \Fs(\bm q^{(1)}, \dots, \bm q^{(d)})} \Is(\qq \| \pp).
\end{equation*}
In the considered finite discrete framework, $\Is(\qq \| \pp) < \infty \iff \Supp{(\qq)} \subseteq \Supp{(\pp)}$. 
If $\Fs(\bm q^{(1)}, \dots, \bm q^{(d)})$ does not contain any array $\qq$ such that $\Supp{(\qq)} \subseteq \Supp{(\pp)}$, then all $\qq \in \Fs(\bm q^{(1)}, \dots, \bm q^{(d)})$ are such that $\Is(\qq \| \pp) = \infty$ and $\pp$ does not admit a proper $\Is$-projection. By contrast, if $\Fs(\bm q^{(1)}, \dots, \bm q^{(d)})$ contains at least one probability array $\qq$ such that $\Supp(\qq) \subseteq \Supp(\pp)$, then any $\Is$-projection $\qq^*$ is necessarily such that $\Supp(\qq^*) \subseteq \Supp(\pp)$. In fact, the strict convexity of $\Is$ as a function of $\qq$ and the convexity and closedness of $\Fs(\bm q^{(1)}, \dots, \bm q^{(d)})$ guarantee the uniqueness of that $\Is$-projection \citep[Theorem 2.1]{Csiszar75}, which we will then denote
\begin{equation} \qq^* \doteq \Is_{\Fs(\bm q^{(1)}, \dots, \bm q^{(d)})}(\pp).  	\label{eq:I:proj:Frechet} \end{equation}
Furthermore, if there exists $\qq \in \Fs(\bm q^{(1)}, \dots, \bm q^{(d)})$ such that $\Supp(\qq) = \Supp(\pp)$, then it follows directly from \citet[Theorem 3.1]{Csiszar75} that $\Supp(\qq^*) = \Supp(\pp)$ and $\qq^*$ is the unique element of $\Fs(\bm q^{(1)}, \dots, \bm q^{(d)})$ such that, for all $\bm i = (i_1,\ldots,i_d)\in \Rs$:
\begin{equation} q^*_{\bm i} = p_{\bm i} \prod_{\ell =1}^d \alpha^{(\ell)}_{i_\ell} \label{eqn:alphasd} \end{equation}
for sets of positive constants $\{\alpha^{(\ell)}_1,\ldots,\alpha^{(\ell)}_{r_\ell}\}$ ($\ell = 1,\ldots,d$). Reversely, it follows from  \citet[Theorem 3.1]{Csiszar75} again that, if one can find $\qq^* \in  \Fs(\bm q^{(1)}, \dots, \bm q^{(d)})$ admitting the representation (\ref{eqn:alphasd}), then it is automatically the (unique) $\Is$-projection of $\pp$ on $\Fs(\bm q^{(1)}, \dots, \bm q^{(d)})$.

\begin{remark} \label{rmk:IPF} Obtaining the $\Is$-projection of a probability array onto a Fréchet class in closed form is generally impossible, with explicit solutions available only in simple cases (Example \ref{ex:example} below provides one such instance). However, when an $\Is$-projection exists, it can always be numerically approximated using the Iterated Proportional Fitting (IPF) procedure  \citep{Ireland68,Fienberg70,Ruschendorf95,Idel16}; see also \citet[Section 12.2]{Rudas18}. This algorithm, also known as Sinkhorn algorithm \citep{Sinkhorn64,Sinkhorn67} in Optimal Transport theory \citep{Cuturi13,Carlier22}, offers fast convergence and accuracy guarantees. Practical implementation (for any dimension $d$) is provided by the R package {\tt mipfp} \citep{Barthelemy18}. Consequently, we can consider any required $\Is$-projection to be effectively accessible to us.\footnote{More precisely: capable of numerical approximation to any arbitrary precision.} \qed 
\end{remark}

\subsection{Copula arrays and discrete copulas}

For all $\ell \in [d]$, let $\bm u^{(\ell)} = (\frac{1}{r_\ell},\ldots,\frac{1}{r_\ell})$, the univariate probability arrays corresponding to (discrete) uniform distributions. We define the Fr\'echet class  
\begin{equation} \Gamma \doteq \Fs({\bm u}^{(1)},\ldots,{\bm u}^{(d)}) = \left\{ \ggamma \in \Ps : \gamma^{(\ell)}_i = \frac{1}{r_\ell} \text{ for all } i \in [r_\ell] \text{ and } \ell \in [d] \right\}, 	\label{eq:Gamma} \end{equation}
and call its elements \emph{copula arrays}. We note that $\Gamma$ is related to the convex polytope of the polystochastic arrays of $d$-dimensions (`$d$-dimensional Birckhoff polytope'; \citealp{Jurkat68,Linial14,Taranenko16}). 

\ppn For any non-empty subset $\Ss \subseteq \Rs$ in (\ref{eq:Rs}), let
\begin{align} \Ps(\Ss)  = \left\{ \pp \in \Ps : \Supp(\pp) = \Ss \right\} \qquad \text{ and } \qquad  \Gamma(\Ss)  = \left\{ \ggamma \in \Gamma : \Supp(\ggamma) = \Ss \right\}. \label{eq:Gamma:S}
\end{align}
Consider a given $\pp \in \Fs({\bm p^{(1)},\dots,\bm p^{(d)}}) \subset \Ps$, whose support is $\Ss_{\pp} = \Supp(\pp)$. According to Section \ref{subsec:IprojFrech}, $\Gamma(\Ss_{\pp}) \neq \emptyset$ is a sufficient condition for $\pp$ to admit a unique $\Is$-projection on $\Gamma$; viz. 
\begin{equation} \ggamma_{\pp} \doteq \Is_{\Gamma}(\pp), \label{eqn:copproj} \end{equation}
where $\Is_{\Gamma}(\cdot) = \Is_{\Fs({\bm u}^{(1)},\ldots,{\bm u}^{(d)})}(\cdot)$ is defined as in~\eqref{eq:I:proj:Frechet}. Explicitly, from (\ref{eqn:alphasd}), $\ggamma_\pp \in \Gamma(\Ss_\pp)$ and is the unique element of $\Gamma$ such that 
\begin{equation} \gamma_{\pp;\bm i} = p_{\bm i} \prod_{\ell =1}^d \beta_{i_\ell}^{(\ell)} \label{eqn:copprojbetas} \end{equation}
for some sets of positive constants $\{\beta_{i_\ell}^{(\ell)}\}$, $\ell \in [d], i_\ell \in [r_\ell]$. Reversing (\ref{eqn:copprojbetas}), we have directly 
\begin{equation} p_{\bm i} = \frac{\gamma_{\pp;\bm i}}{\prod_{\ell =1}^d \beta_{i_\ell}^{(\ell)}} = \gamma_{\pp;\bm i}\prod_{\ell =1}^d \frac{1}{\beta_{i_\ell}^{(\ell)}}, \label{eqn:copprojinvbetas} \end{equation}
and as $\{1/\beta_{i_\ell}^{(\ell)}\}$ ($\ell = [d], i_\ell \in [r_\ell]$) are evidently positive constants as well, we deduce that $\pp$ is the unique $\Is$-projection of $\gamma_\pp$ on $\Fs({\bm p^{(1)},\dots,\bm p^{(d)}})$. The following result -- essentially the $d$-variate analog to \citet[Proposition~3.1]{Kojadinovic24} -- formalises the previous observations.

\begin{proposition} \label{prop:sklar:like} Let $\pp \in \Fs(\bm p^{(1)},\ldots,\bm p^{(d)}) \subset \Ps$ have support $\Ss_{\pp} \subseteq \Rs$. Then, the following assertions are equivalent:
	\begin{enumerate}[(i)]
		\item $\Gamma(\Ss_{\pp}) \neq \emptyset$;
		\item $\pp$ admits a unique $\Is$-projection on $\Gamma$, viz.\  $\ggamma_\pp = \Is_\Gamma(\pp)$, and $\Supp(\ggamma_\pp) = \Ss_{\pp}$; 
		\item if there exists $\ggamma \in \Gamma$ such that $\pp = \Is_{\Fs({\bm p^{(1)},\dots,\bm p^{(d)}})}(\ggamma)$, then $\ggamma =  \Is_\Gamma(\pp) $; that is, $\ggamma_\pp \doteq  \Is_\Gamma(\pp)$ is the only element of $\Gamma$ whose $\Is$-projection on $\Fs({\bm p^{(1)},\dots,\bm p^{(d)}})$ is $\pp$.
	\end{enumerate}
\end{proposition}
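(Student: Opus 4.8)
The plan is to prove the cycle $(i)\Rightarrow(ii)\Rightarrow(iii)\Rightarrow(i)$, using only Csiszár's characterisation of $\Is$-projections (Theorems~2.1 and~3.1 of \citealp{Csiszar75}, recalled in Section~\ref{subsec:IprojFrech}) together with the elementary fact that an $\Is$-projection is supported within the support of the array being projected. The implication $(i)\Rightarrow(ii)$ is essentially the computation \eqref{eqn:copproj}--\eqref{eqn:copprojbetas} carried out just before the statement: if $\Gamma(\Ss_{\pp})\neq\emptyset$ then $\Gamma$ contains an array whose support is $\Ss_{\pp}=\Supp(\pp)$, so $\ggamma_{\pp}=\Is_{\Gamma}(\pp)$ exists and is unique (strict convexity of $\qq\mapsto\Is(\qq\|\pp)$; convexity and closedness of $\Gamma$; Theorem~2.1), while Theorem~3.1 --- whose support hypothesis is precisely $\Gamma(\Ss_{\pp})\neq\emptyset$ --- yields $\Supp(\ggamma_{\pp})=\Ss_{\pp}$ and the product form \eqref{eqn:copprojbetas}. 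The converse $(ii)\Rightarrow(i)$ is immediate, since $\Supp(\ggamma_{\pp})=\Ss_{\pp}$ means $\ggamma_{\pp}\in\Gamma(\Ss_{\pp})$.

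For $(ii)\Rightarrow(iii)$ I would first deal with existence: rewriting \eqref{eqn:copprojbetas} in the form \eqref{eqn:copprojinvbetas} exhibits $\pp$ in the shape \eqref{eqn:alphasd} relative to $\ggamma_{\pp}$ with positive constants $1/\beta^{(\ell)}_{i_\ell}$, and since $\pp\in\Fs(\bm p^{(1)},\dots,\bm p^{(d)})$ and $\Supp(\pp)=\Supp(\ggamma_{\pp})$, the converse direction of Theorem~3.1 identifies $\pp=\Is_{\Fs(\bm p^{(1)},\dots,\bm p^{(d)})}(\ggamma_{\pp})$; thus $\ggamma_{\pp}$ is an element of $\Gamma$ whose $\Is$-projection onto the Fréchet class of $\pp$ is $\pp$. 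For uniqueness, let $\ggamma\in\Gamma$ be any array with $\Is_{\Fs(\bm p^{(1)},\dots,\bm p^{(d)})}(\ggamma)=\pp$. The support inclusion gives $\Ss_{\pp}=\Supp(\pp)\subseteq\Supp(\ggamma)$, and the real work is to promote this to an equality. Here I would use that $\Supp(\ggamma)$, being the support of an element of $\Gamma$, is a ``good'' (fully indecomposable) pattern in the sense of Section~\ref{subsec:IprojFrech}, and that for such patterns a Fréchet class which is non-empty and contains an array supported within $\Supp(\ggamma)$ --- namely $\Fs(\bm p^{(1)},\dots,\bm p^{(d)})$, which contains $\pp$ --- must in fact contain an array with support exactly $\Supp(\ggamma)$ (a combinatorial fact of Ryser--Bacharach--Sinkhorn type; see \citealp{Ryser60,Bacharach65,Sinkhorn67,Brualdi06}). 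The support-equality case of Theorem~3.1 then forces $\Supp\!\big(\Is_{\Fs(\bm p^{(1)},\dots,\bm p^{(d)})}(\ggamma)\big)=\Supp(\ggamma)$, i.e.\ $\Supp(\ggamma)=\Ss_{\pp}$. With equal supports, Theorem~3.1 puts $\pp$ in the shape \eqref{eqn:alphasd} relative to $\ggamma$; inverting that relation and applying the converse of Theorem~3.1 once more gives $\ggamma=\Is_{\Gamma}(\pp)=\ggamma_{\pp}$.

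For $(iii)\Rightarrow(i)$ the argument is short. Statement $(iii)$ asserts in particular that $\ggamma_{\pp}:=\Is_{\Gamma}(\pp)$ is well defined and that $\Is_{\Fs(\bm p^{(1)},\dots,\bm p^{(d)})}(\ggamma_{\pp})=\pp$. Being the $\Is$-projection of $\pp$ onto $\Gamma$, it satisfies $\Supp(\ggamma_{\pp})\subseteq\Supp(\pp)=\Ss_{\pp}$; being an array whose $\Is$-projection onto $\Fs(\bm p^{(1)},\dots,\bm p^{(d)})$ equals $\pp$, it satisfies $\Ss_{\pp}=\Supp(\pp)\subseteq\Supp(\ggamma_{\pp})$ by the same support inclusion. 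Hence $\Supp(\ggamma_{\pp})=\Ss_{\pp}$, so $\ggamma_{\pp}\in\Gamma(\Ss_{\pp})$ and $\Gamma(\Ss_{\pp})\neq\emptyset$.

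The one genuinely delicate point is the step highlighted inside $(ii)\Rightarrow(iii)$: excluding the possibility that a rival $\ggamma\in\Gamma$ mapping to $\pp$ under the Fréchet-class $\Is$-projection carries support strictly larger than $\Ss_{\pp}$. Everything else is bookkeeping with support inclusions and the product-form characterisation of $\Is$-projections. Settling that point cleanly is exactly what forces the appeal to the combinatorics of attainable support patterns, so that the support-equality case of Csiszár's Theorem~3.1 becomes available for the projection of $\ggamma$; without it one is left with only the one-sided inclusion $\Ss_{\pp}\subseteq\Supp(\ggamma)$.
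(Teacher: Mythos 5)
Your cycle $(i)\Rightarrow(ii)\Rightarrow(iii)\Rightarrow(i)$ is exactly the paper's, and your arguments for $(i)\Rightarrow(ii)$ and $(iii)\Rightarrow(i)$ coincide with the paper's proof (two-sided support inclusions in the latter case). The divergence, and the gap, is in $(ii)\Rightarrow(iii)$. Your uniqueness step rests on the claim that if $\Supp(\ggamma)$ carries an element of $\Gamma$ and $\Fs(\bm p^{(1)},\dots,\bm p^{(d)})$ contains an array supported \emph{within} $\Supp(\ggamma)$, then that Fr\'echet class must contain an array with support \emph{exactly} $\Supp(\ggamma)$. You gesture at Ryser--Bacharach--Sinkhorn but do not prove this, and as stated it is false. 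Take $d=2$, $r_1=r_2=3$, $\Ss'=\{(i_1,i_2): i_1\neq i_2\}$ (the support of the uniform off-diagonal copula array of Example~\ref{ex:example}, so $\Gamma(\Ss')\neq\emptyset$), and margins $\bm p^{(1)}=\bm p^{(2)}=(\tfrac12,\tfrac14,\tfrac14)$: the class $\Fs(\bm p^{(1)},\bm p^{(2)})$ contains the array with $p_{12}=p_{13}=p_{21}=p_{31}=\tfrac14$ and all other entries zero, which is supported within $\Ss'$, yet no member of the class has support exactly $\Ss'$, since $q_{21}+q_{23}=\tfrac14$, $q_{31}+q_{32}=\tfrac14$ and $q_{21}+q_{31}=\tfrac12$ force $q_{23}=q_{32}=0$. (Here the inner support does not itself satisfy Condition~\ref{cond:Gamma:S}, so this is not a counterexample to the Proposition --- but it shows your auxiliary lemma needs hypotheses you have not isolated, so the point you yourself flag as ``genuinely delicate'' remains open in your write-up.)

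The paper avoids this detour entirely: it never establishes $\Supp(\ggamma)=\Ss_{\pp}$. Instead, it writes the hypothesised projection in product form, $p_{\bm i}=\gamma_{\bm i}\prod_{\ell}\alpha^{(\ell)}_{i_\ell}$, combines it with $p_{\bm i}=\gamma_{\pp;\bm i}\prod_{\ell}\bigl(1/\beta^{(\ell)}_{i_\ell}\bigr)$ from \eqref{eqn:copprojinvbetas} to get $\gamma_{\pp;\bm i}=\gamma_{\bm i}\prod_{\ell}\alpha^{(\ell)}_{i_\ell}\beta^{(\ell)}_{i_\ell}$ for all $\bm i$, and then invokes the converse (``sufficiency'') direction of \citet[Theorem 3.1]{Csiszar75} --- which carries no support hypothesis --- to conclude that $\ggamma_\pp$ is the unique $\Is$-projection of $\ggamma$ onto $\Gamma$; since $\ggamma\in\Gamma$, that projection is $\ggamma$ itself, whence $\ggamma=\ggamma_\pp$. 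Two lines of algebra replace your combinatorial lemma, and equality of supports drops out as a consequence rather than being a prerequisite. (The support subtlety is not wholly eliminated --- it is absorbed into asserting the product-form representation \eqref{eqn:alphasd} for the hypothesised projection $\pp=\Is_{\Fs(\bm p^{(1)},\dots,\bm p^{(d)})}(\ggamma)$ ``by the same token'' --- but once that representation is granted nothing further is needed.) To salvage your route you would have to prove the support-equality lemma under the hypotheses actually available (namely $\Gamma(\Ss_{\pp})\neq\emptyset$ \emph{and} $\Gamma(\Supp(\ggamma))\neq\emptyset$ simultaneously); the simpler fix is to adopt the paper's chaining argument.
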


\begin{proof}  See Appendix \ref{app:proofs}. \end{proof}

\noindent Any probability array $\pp$ may thus be understood as formed by a copula array `seed' $\ggamma_\pp \in \Gamma$ on which relevant margins  ${\bm p^{(1)},\dots,\bm p^{(d)}}$ have been `mounted' via (\ref{eqn:copprojinvbetas}). Reversely, that seed $\ggamma_\pp$ may be recovered by `unmounting' those margins as in (\ref{eqn:copprojbetas}), which is equivalent to `mounting' (via the same process) uniform margins on $\pp$. From a statistical modelling perspective, the reversibility of the construction  
\begin{equation} \pp = \Is_{\Fs({\bm p^{(1)},\dots,\bm p^{(d)}})}(\ggamma_\pp) \iff \gamma_\pp = \Is_{\Gamma}(\pp) \label{eqn:sklarlike} \end{equation}  
is crucial, as it enables the development of flexible models by initially concentrating on the dependence/copula and subsequently transitioning back to the original marginal scales (`{\it there and back again}'). This is naturally reminiscent of the classical copula methodology articulated around Sklar's representation in continuous cases. In fact, (\ref{eqn:sklarlike}) may be interpreted -- to some extent -- as a discrete analogue to the usual Sklar's construction.

\ppn Now, for any margins $\bm q^{(1)}, \dots, \bm q^{(d)}$ such that $\exists \qq \in \Fs(\bm q^{(1)},\ldots,\bm q^{(d)})$ with $\Supp(\qq) = \Ss_{\pp}$, the unique $\Is$-projection $\qq^*$ of $\pp$ on $\Fs(\bm q^{(1)},\ldots,\bm q^{(d)})$ satisfies (\ref{eqn:alphasd}), and we conclude with (\ref{eqn:copprojinvbetas}):
\begin{equation} q^*_{\bm i} = p_{\bm i} \prod_{\ell =1}^d \alpha^{(\ell)}_{i_\ell} = \gamma_{\pp;\bm i} \prod_{\ell =1}^d \frac{\alpha^{(\ell)}_{i_\ell} }{\beta_{i_\ell}^{(\ell)}} \qquad \forall \bm i \in \Rs. \label{eqn:groupaction} \end{equation}
As $\alpha^{(\ell)}_{i_\ell}/\beta_{i_\ell}^{(\ell)} > 0$ for all $\ell \in[d]$ and $i_\ell \in [r_\ell]$, this means that $\qq^*$ is the $\Is$-projection of $\ggamma_\pp$ on $\Fs(\bm q^{(1)},\ldots,\bm q^{(d)})$ as well, and by Proposition \ref{prop:sklar:like}, $\ggamma_\pp$ is the copula array seed of $\qq^*$. In fact, as (\ref{eqn:alphasd}) defines a group action on the space of probability arrays $\Ps$, it defines an equivalence relation:
\[\qq \sim \pp  \iff \exists \{\alpha^{(\ell)}_{i_\ell}>0\}_{\ell \in [d], i_\ell \in [r_\ell]} \ \text{s.t. }\  q_{\bm i} = p_{\bm i} \prod_{\ell =1}^d \alpha^{(\ell)}_{i_\ell} \ \forall \bm i \in \Rs \iff  \ggamma_\qq = \ggamma_\pp  .\]
The copula array `seed' is thus a maximal invariant under the above group operation, which captures `everything but the margins' of the probability arrays under consideration -- that is, according to \cite{Geenens22}, the underlying dependence structure. For example, if $\Supp(\pp) = \Rs$ ({\it hyper-rectangular support}), then it follows from (\ref{eqn:groupaction}) that $\pp$, $\qq^*$ and $\ggamma_\pp = \ggamma_{\qq^*}$ share the same sets of conditional and higher-order odds ratios as described in \citet[Sections 6.2 and 10.1]{Rudas18}.

\ppn Like in Section \ref{sec:bivdiscrcop}, we can now define the (discrete) copula measure $G_\pp$ of a vector $\XX=(X_1,\ldots,X_d) \sim P$ with probability array $\pp$ as the probability measure of the vector $\UU=(U_1,\ldots,U_d)$ supported on $\bigtimes_{\ell \in [d]} \frac{[r_\ell]}{r_\ell+1} = \bigtimes_{\ell \in [d]} \{\frac{1}{r_\ell + 1}, \ldots, \frac{r_\ell}{r_\ell + 1} \}$ whose copula array is $\ggamma_\pp$; in other words, the discrete measure obtained by combining the probability array $\ggamma_\pp$ with the $d$ inverse rank functions $\Phi^{-1}_{\ell}: [r_\ell] \to \frac{[r_\ell]}{r_\ell+1}: \Phi^{-1}_\ell(i) = \frac{i}{r_\ell+1}$, $\ell \in [d]$. Such copula measure is a margin-free distribution-morphic representation of the dependence structure of the random vector $\XX=(X_1,\ldots,X_d) \sim P$. The above observations justify defining:

\begin{definition} \label{def:cop} Let $\XX=(X_1,\ldots,X_d) \sim P$ be a $d$-dimensional discrete random vector whose probability array $\pp \in \Ps$ has support $\Ss_\pp \subseteq \Rs$. If $\Gamma(\Ss_\pp) \neq \emptyset$, we call the unique copula array $\ggamma \in \Gamma$ satisfying $\ggamma = \Is_\Gamma(\pp)$ the copula array of $\pp$, denoted $\gamma_\pp$. The probability measure $G_\pp$ on  $\bigtimes_{\ell \in [d]} \frac{[r_\ell]}{r_\ell+1}$ whose copula array is $\ggamma_\pp$ is called the (discrete) copula measure of $\XX$, the associated joint cumulative distribution is the (discrete) copula of $\XX$ and the associated joint probability mass function is the copula pmf of $\XX$.
\end{definition}

\subsection{Parameterisation of copula arrays with a given support $\Ss$}

As for $d=2$ (Section \ref{sec:bivdiscrcop}), it appears through Proposition \ref{prop:sklar:like} that the condition $\Gamma(\Ss_\pp) \neq \emptyset$ ensures that the discrete copula machinery fits neatly into place for distributions with probability array $\pp$.\footnote{In the spirit of Remark \ref{rem:mincond}, $d$-dimensional cases of `exclusive' and `critical' regional dependence, for which that condition is violated, could be discussed in more detail. We will leave this for future research.} The support $\Ss_\pp$ of $\pp$ is pivotal when identifying its copula array $\ggamma_\pp$, as $\ggamma_\pp$ admits the same support as $\pp$ (Proposition \ref{prop:sklar:like}$(ii)$). Consider a given non-empty subset $\Ss \subseteq \Rs$, and assume that:
\begin{condition} The set $\Gamma(\Ss)$ in~\eqref{eq:Gamma:S} is nonempty.   \label{cond:Gamma:S} \end{condition}
\noindent This condition simply assumes the existence of copula arrays with support $\Ss$. In this section, we construct a general parameterisation of those copula arrays; that is, the elements of $\Gamma(\Ss)$. This parameterisation will be instrumental to define and study estimators of copula arrays in Section \ref{sec:empcop}.

\subsubsection{$\Gamma(\Ss)$ as a subset of an affine space} \label{sec:aff}

Take the probability array $\qq \in \Ps(\Ss)$ defined entrywise as 
\begin{equation} q_{\ii} = \frac{1}{|\Ss|}\indic{\ii \in \Ss}, \quad \bm i \in \Rs. \label{eqn:qquasi}\end{equation}
(Here $\indic{\bullet}$ denotes the indicator function, taking the value 1 if statement $\bullet$ is true and 0 otherwise.) Since $\Supp(\qq) = \Ss$, under Condition~\ref{cond:Gamma:S}, $\qq$ admits a unique $\Is$-projection on $\Gamma$ (Proposition \ref{prop:sklar:like}):
\begin{equation}  \ggamma^{(q,\Ss)} \doteq \Is_\Gamma(\qq) .  \label{eq:gamma:q}
\end{equation}
It follows from the definition of $\qq$ and (\ref{eqn:copprojbetas}) that 
\[\gamma_{\ii}^{(q,\Ss)}  = \left\{\prod_{\ell =1}^d \beta^{(\ell)}_{i_\ell}\right\} \indic{\bm i \in \Ss} \]
for some positive constants $\{\beta_{i_\ell}^{(\ell)}\}$, $\ell \in [d], i_\ell \in [r_\ell]$. If $\Ss = \Rs$ (hyper-rectangular support), then $\gamma_{\ii}^{(q,\Rs)}$ factorises as in (\ref{eqn:indeparray}) for all $\ii$, and $\ggamma^{(q,\Rs)}$ is simply the independence copula array $\ggamma^{(\indep)} \doteq [{\bm u}^{(1)} \bigcdot \dots \bigcdot {\bm u}^{(d)}]$ defined as
\[ \gamma_\ii^{(\indep)}  = \frac{1}{\prod_{\ell =1}^d  r_\ell}, \quad \forall \ii \in \Rs. \] 
In contrast, if $\Ss \neq \Rs$ (non hyper-rectangular support), then the dependence structure described by $\ggamma^{(q,\Ss)}$ is that of non-independent variables -- due to $\gamma_{\ii}^{(q,\Ss)} = 0$ for some $\ii \in \Rs$; recall Section \ref{sec:bivdiscrcop} -- which behave {\it as if they were} on their joint support (their joint distribution factorises on $\Ss$). This is what \cite{Goodman68} called `{quasi-independence}'; see also \citet[Section 4.2]{Geenens22} and Section \ref{subsec:quasitest} below. Therefore, we will refer to $\ggamma^{(q,\Ss)}$ as the {\it quasi-independence copula array} with support $\Ss$. 

\ppn Let $\ggamma \in \Gamma(\Ss)$ and define
\begin{equation} \Delta \doteq \ggamma - \ggamma^{(q,\Ss)} \quad \in \R^{r_1 \times \cdots \times r_d}. 	\label{eq:gamma:rep} \end{equation}
As both $\ggamma$ and $\ggamma^{(q,\Ss)}$ are copula arrays of support $\Ss$, $\Delta$ actually belongs to the set
\begin{equation} \bm \Delta^\circ(\Ss) = \{ \ab \in \R^{r_1 \times \cdots \times r_d} : \Supp(\ab) \subseteq \Ss \text{ and } a^{(\ell)}_i= 0 \ \forall\,  \ell \in [d], \, i \in [r_\ell]   \} \label{eq:DDelta}
\end{equation}
of the arrays whose support is included in $\Ss$ and all univariate margins are null. As a linear subspace, $\bm \Delta^\circ(\Ss)$ contains at least the zero array, and either only that one or infinitely many other arrays as well. Denote $d_\circ$ the dimension of $\bm \Delta^\circ(\Ss)$. It appears from Condition~\ref{cond:Gamma:S}, \eqref{eq:gamma:q} and (\ref{eq:gamma:rep}) that, if $d_\circ = 0$ (i.e., $\bm \Delta^\circ(\Ss)$ contains only the zero array), then $\Gamma(\Ss)$ contains only $\ggamma^{(q,\Ss)}$. In such a case, parameterising $\Gamma(\Ss)$ appears vacuous. In contrast, if $d_\circ \geq 1$, there are infinitely many copula arrays in $\Gamma(\Ss)$. Therefore, to proceed, we will assume: 

\begin{condition} $d_\circ = \dim(\bm \Delta^\circ(\Ss)) \geq 1$. 
	\label{cond:Gamma:S:infinite} \end{condition}
\noindent Under Condition~\ref{cond:Gamma:S:infinite}, we may find $d_\circ$ arrays $\Delta_1, \dots, \Delta_{d_\circ} \in \bm \Delta^\circ(\Ss) \subset \R^{r_1 \times \cdots \times r_d}$ such that any $\ggamma$ in $\Gamma(\Ss)$ can be expressed as
\begin{equation*} \ggamma = \ggamma^{(q,\Ss)} + \sum_{k=1}^{d_\circ}\theta_k \Delta_k,  \end{equation*}
in terms of a unique vector $\bm \theta = (\theta_1, \dots, \theta_{d_\circ}) \in \R^{d_\circ}$. Incidentally, this means that $\Gamma(S)$ is a subset of an {\it affine space} with $\Delta_1, \dots, \Delta_{d_\circ}$ as basis; see \citet[Chapter 2]{Berger09}. Given this basis, any copula array of $\Gamma(\Ss)$ is unequivocally identified by its coefficients  $ (\theta_1, \dots, \theta_{d_\circ})$. We may, therefore, represent arrays of $\Gamma(\Ss)$ as $\ggamma \doteq \ggamma(\ttheta)$, where $\ggamma: \R^{d_\circ} \to \R^{r_1 \times \cdots \times r_d}$ is the function
\begin{equation} \ttheta = (\theta_1,\ldots,\theta_{d_\circ}) \ \mapsto \  \ggamma(\bm \theta) = \ggamma^{(q,\Ss)} + \sum_{k=1}^{d_\circ}\theta_k \Delta_k. 	\label{eq:gamma:decomp}  \end{equation}
Identifying $\ggamma = \ggamma(\ttheta)$ means that $\ggamma$ is sometimes understood as a copula array and sometimes as the function of $\ttheta$ returning that copula array. The context clarifies what is intended and this should not cause any confusion.

\subsubsection{Practical identification of a basis} \label{subsec:basis}

It will be necessary here to vectorise the multi-dimensional arrays under consideration, so that we may apply standard operations on vectors and matrices. While this can produce somewhat unwieldy expressions, their complexity is merely notational and the underlying ideas remain entirely grounded in elementary principles of linear algebra. Let $\vect(\cdot)$ denote the $d$-dimensional extension of the column-major vectorisation operator which transforms an array in $\R^{r_1 \times \cdots \times r_d}$ into a vector of $\R^{\prod_{\ell = 1}^d r_\ell}$. Specifically, given an array $\ab \in  \R^{r_1 \times \cdots \times r_d}$, $\vect(\ab)$ is a vector of $\R^{\prod_{\ell = 1}^d r_\ell}$ whose elements are given by
\begin{equation*} \vect{(\ab)}_{\phi(\bm i)} = a_{\ii}, \qquad \ii \in \Rs, \label{eq:vect} \end{equation*}
where $\phi:\Rs \to \left[\prod_{\ell = 1}^d r_\ell \right]$ is
\begin{equation}
	\label{eq:phi}
	\phi(\bm i) = i_1 + \sum_{\ell_1=2}^{d} \left( (i_{\ell_1} - 1) \prod_{\ell_2=1}^{\ell_1-1} r_{\ell_2} \right), \qquad \ii = (i_1,\ldots,i_d) \in \Rs.
\end{equation}
The inverse mapping $\vect^{-1}: \R^{\prod_{\ell = 1}^d r_\ell } \to \R^{r_1 \times \cdots \times r_d}$ is such that, for any starting vector $\bm v \in \R^{\prod_{\ell = 1}^d r_\ell }$, the elements of the resulting array $\vect^{-1}(\bm v) \in \R^{r_1 \times \cdots \times r_d}$ are given by
\begin{equation*} \vect^{-1}(\bm v)_{\phi^{-1}(j)} = v_j, \qquad j \in \left[\prod_{\ell = 1}^d r_\ell \right], \label{eq:vect:inv} \end{equation*}
where $\phi^{-1}(j) = {\bm i} = (i_1, i_2, \dots, i_d)$ is computed as follows: initialise $k \leftarrow j - 1$; for $\ell \in  [d]$: set $i_\ell = (k \bmod r_\ell) + 1$ and update $k \leftarrow \left\lfloor \frac{k}{r_\ell} \right\rfloor$. 

\ppn Throughout, we will denote $\un{\bm p} \doteq \vect(\pp) \in \R^{\prod_{\ell = 1}^d r_\ell }$ and $\un{\ggamma} \doteq \vect(\ggamma) \in \R^{\prod_{\ell = 1}^d r_\ell }$, and more generally the vectorised versions of $\Ps(\Ss)$ and $\Gamma(\Ss)$ in (\ref{eq:Gamma:S}) as
\begin{align*} \un{\Ps}(\Ss) & \doteq \{  \un{\bm p} \in \R^{\prod_{\ell = 1}^d r_\ell} : \un{\bm p} = \vect(\pp), \pp \in \Ps(\Ss) \},  \\
	\un{\Gamma}(\Ss) &  \doteq \{ \un{\ggamma} \in \R^{\prod_{\ell = 1}^d r_\ell} : \un{\ggamma} = \vect(\bm \gamma), \bm \gamma \in \Gamma(\Ss) \}.  
\end{align*}
Likewise, from $\bm \Delta^\circ(\Ss)$ in (\ref{eq:DDelta}), let 
\begin{equation} \un{\bm \Delta}^\circ(\Ss) = \{ \bm x \in \R^{ \prod_{\ell = 1}^d r_\ell }: \bm x = \vect(\Delta), \Delta \in \bm \Delta^\circ(\Ss) \}. \label{eq:DDelta:vect} \end{equation}
The linear and bijective nature of $\vect(\cdot):\R^{r_1 \times \cdots \times r_d} \to \R^{\prod_{\ell = 1}^d r_\ell}$ has two implications. First, $\dim\left(\un{\bm \Delta}^\circ(\Ss)\right) = \dim\left(\bm \Delta^\circ(\Ss)\right) = d_\circ$; and second, the vectors $\un{\Delta}_1, \dots, \un{\Delta}_{d_\circ} \in \R^{ \prod_{\ell = 1}^d r_\ell }$ form a basis of $\un{\bm \Delta}^\circ(\Ss)$ if and only if the arrays $\vect^{-1}(\un{\Delta}_1), \dots, \vect^{-1}(\un{\Delta}_{d_\circ}) \in \R^{r_1 \times \cdots \times r_d}$ form a basis of $\bm \Delta^\circ(\Ss)$. Hence, to obtain a basis of $\bm \Delta^\circ(\Ss)$, it suffices to obtain a basis of $\un{\bm \Delta}^\circ(\Ss)$. We can proceed as follows.

\ppn As $\bm x \in \un{\bm \Delta}^\circ(\Ss) \iff \bm x = \vect(\Delta)$ for some $\Delta \in {\bm \Delta}^\circ(\Ss)$, any vector $\bm x \in \un{\bm \Delta}^\circ(\Ss)$ is necessarily such that $x_{\phi(\bm i)} = 0$ for all $\ii \in \Rs \setminus \Ss$ and 
\[ \sum_{ i_1 \in [r_1]} \dots \sum_{ i_{\ell-1} \in [r_{\ell-1}]} \sum_{ i_{\ell+1} \in [r_{\ell+1}]} \dots \sum_{ i_d \in [r_d]} x_{\phi(i_1, \dots, i_{\ell-1}, i,  i_{\ell+1}, \dots ,i_d)} = 0\]
for all $\ell \in [d]$ and $i \in [r_\ell]$. Let $C_\Ss$ be the $\left\{\left( \prod_{\ell = 1}^d r_\ell - | \Ss | + \sum_{\ell = 1}^d r_\ell \right) \times \prod_{\ell = 1}^d r_\ell\right\}$-matrix whose rows encode the above linear constraints, and consider the homogeneous system 
\begin{equation} C_\Ss \, \bm x = \bm 0. 	\label{eq:linsystem} \end{equation}
It appears that $\un{\bm \Delta}^\circ(\Ss)$ is the null space of the matrix $C_\Ss$, and $d_\circ$ is the dimension of that null space. Any algorithm computing a basis of the null space of a matrix can be used for identifying a basis for $\un{\bm \Delta}^\circ(\Ss)$. For example, the function \code{null(...)} from the \proglang{R} package \pkg{pracma} \citep{Borchers22} does this by exploiting the so-called QR-factorization of the input matrix \citep[pp.\, 49-50]{Trefethen97}. Indeed $\code{null}(C_\Ss)$ returns a matrix whose column vectors form a basis of the null space of $C_\Ss$. 

\begin{remark} \label{rmk:Cs} It is seen that Condition~\ref{cond:Gamma:S:infinite} holds if and only if the null space of $C_\Ss$ is non-trivial. A convenient practical test is to attempt to solve~\eqref{eq:linsystem}, at least numerically. If the system admits only the trivial solution (the zero vector), then Condition~\ref{cond:Gamma:S:infinite} does not hold. Otherwise, the existence of infinitely many solutions confirms that Condition~\ref{cond:Gamma:S:infinite} is satisfied.	\qed
\end{remark}
 
\subsubsection{Resulting parameterisation} \label{subsec:param} 

Assume Conditions \ref{cond:Gamma:S}-\ref{cond:Gamma:S:infinite} and suppose that a basis $\Delta_1, \dots, \Delta_{d_\circ}$ of $\bm \Delta^\circ(\Ss)$ has been identified as suggested above (or otherwise). Consider the vectorised version of $\ggamma(\ttheta)$ in \eqref{eq:gamma:decomp}; viz.\ the function $\un{\ggamma}: \R^{d_\circ} \to \R^{\prod_{\ell = 1}^d r_\ell}$ defined as $\un{\ggamma}(\bm \theta)  \doteq \vect(\ggamma(\ttheta))$, or explicitly
\begin{equation}  \un{\ggamma}(\bm \theta) = \vect \left( \ggamma^{(q,\Ss)} + \sum_{k = 1}^{d_\circ} \theta_{k} \Delta_k \right) = \un{\ggamma}^{(q,\Ss)} + \sum_{k = 1}^{d_\circ} \theta_{k} \un{\Delta}_k = \un{\ggamma}^{(q,\Ss)} + A_\Ss\bm \theta, 	\label{eq:un:G:vartheta}  \end{equation}
with $A_\Ss$ the $\left(\prod_{\ell = 1}^d r_\ell  \times d_\circ\right)$-matrix whose columns vectors are $\un{\Delta}_1, \dots, \un{\Delta}_{d_\circ}$.\footnote{This is exactly the outcome of the \code{null(...)} function of the \pkg{pracma} R package mentioned at the end of Section \ref{subsec:basis}.} Reversely, we may rewrite \eqref{eq:gamma:decomp} as
\begin{equation} \ggamma(\bm \theta) = \vect^{-1} (\un{\ggamma}(\bm \theta)) = \ggamma^{(q,\Ss)} + \vect^{-1}(A_\Ss\bm \theta). 	\label{eq:G:vartheta} \end{equation}
Now, for any $\ttheta \in \R^{d_\circ}$, this array $\ggamma(\ttheta)$ has support $\Ss$ and discrete uniform margins ($[\ggamma(\bm \theta)]_i^{(\ell)} = 1/r_{\ell} \ \forall\, i \in [r_\ell]$ and $\ell \in [d]$) by construction. However, it is not necessarily the case that $0 < [\ggamma(\ttheta)]_{\ii} < 1$ for all $\ii \in \Ss$; in other words, without restriction on $\ttheta$, $\ggamma(\ttheta)$ is not necessarily a valid copula array of $\Gamma(\Ss)$.

\ppn Let $R_\Ss$ be the $(|\Ss| \times \prod_{\ell = 1}^d r_\ell)$-matrix obtained from the $(\prod_{\ell = 1}^d r_\ell \times \prod_{\ell = 1}^d r_\ell)$-identity matrix by deleting its rows with index $\phi(\bm i)$, $\bm i \in \Rs \setminus \Ss$, where $\phi$ is defined in~\eqref{eq:phi}. See that $R_\Ss$ is an `entry selection matrix'; that is, left-multiplying by $R_\Ss$ allows us to, roughly speaking, `retain only the elements of vectorised arrays that are in the support $\Ss$' while multiplying by its transpose performs the inverse operation, which amounts to `adding zeros at the right spots' in vectors. Call $\Theta \subset \R^{d_\circ}$ the admissible parameter set; that is, $\ttheta \in \Theta \iff \ggamma(\ttheta) \in \Gamma(\Ss) \iff 0 < [\ggamma(\ttheta)]_{\ii} < 1$ for all $\bm i \in \Ss$. The latter set of inequalities can be written under matrix form as $\bm 0_{\R^{|\Ss|}} < R_\Ss (\un{\ggamma}^{(q,\Ss)} + A_\Ss\bm \theta) < \bm 1_{\R^{|\Ss|}}$. Hence,
\begin{equation} \Theta = \left\{ \bm \theta \in \R^{d_\circ} : - R_\Ss\un{\ggamma}^{(q,\Ss)}  < \widetilde{A}_\Ss\bm \theta < \bm 1_{\R^{|\Ss|}} - R_\Ss\un{\ggamma}^{(q,\Ss)} \right\}, \label{eq:Theta} \end{equation}
where $\widetilde{A}_\Ss \doteq R_\Ss A_\Ss$, the $(|\Ss| \times d_\circ)$-matrix obtained from $A_\Ss$ by deleting its null rows (corresponding to the structurally null entries of all arrays of $\Delta^\circ(\Ss)$). Note that $R_\Ss\un{\ggamma}^{(q,\Ss)}$ is the vector listing the non-null entries of $\ggamma^{(q,\Ss)}$. As a relatively obvious consequence of the construction, we have:

\begin{lemma} \label{lem:bij} Under Conditions \ref{cond:Gamma:S}-\ref{cond:Gamma:S:infinite}, the function $\ttheta \mapsto \un{\ggamma}(\ttheta) $ in~\eqref{eq:un:G:vartheta} is an affine bijection between $\Theta$ and $\un{\Gamma}(\Ss)$, and the function $\ttheta \mapsto \ggamma(\ttheta) $ in~\eqref{eq:G:vartheta} is a bijection between $\Theta$ and $\Gamma(\Ss)$.
\end{lemma}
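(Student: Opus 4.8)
The plan is to establish the two claimed bijections by unwinding the definitions and leaning on the affine-space structure already set up in Section \ref{sec:aff}. I would split the argument into two halves, treating $\ttheta \mapsto \un{\ggamma}(\ttheta)$ first and then transferring everything through $\vect^{-1}$.

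For the first half, I would argue as follows. The map $\ttheta \mapsto \un{\ggamma}(\ttheta) = \un{\ggamma}^{(q,\Ss)} + A_\Ss\ttheta$ in \eqref{eq:un:G:vartheta} is visibly affine. Its injectivity on all of $\R^{d_\circ}$ (hence on $\Theta$) follows because the columns $\un{\Delta}_1, \dots, \un{\Delta}_{d_\circ}$ of $A_\Ss$ form a basis of $\un{\bm \Delta}^\circ(\Ss)$ and are therefore linearly independent, so $A_\Ss$ has full column rank and $A_\Ss\ttheta = A_\Ss\ttheta'$ forces $\ttheta = \ttheta'$. For surjectivity onto $\un{\Gamma}(\Ss)$, I would take any $\un{\ggamma} \in \un{\Gamma}(\Ss)$, let $\ggamma = \vect^{-1}(\un{\ggamma}) \in \Gamma(\Ss)$, and recall from \eqref{eq:gamma:rep}--\eqref{eq:DDelta} that $\Delta \doteq \ggamma - \ggamma^{(q,\Ss)}$ lies in $\bm \Delta^\circ(\Ss)$ (both $\ggamma$ and $\ggamma^{(q,\Ss)}$ are copula arrays of support exactly $\Ss$, so their difference has support inside $\Ss$ and all univariate margins $\tfrac{1}{r_\ell} - \tfrac{1}{r_\ell} = 0$). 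Hence $\vect(\Delta) \in \un{\bm \Delta}^\circ(\Ss)$ and can be written $\vect(\Delta) = \sum_k \theta_k \un{\Delta}_k = A_\Ss\ttheta$ for a unique $\ttheta \in \R^{d_\circ}$, giving $\un{\ggamma} = \un{\ggamma}(\ttheta)$. It remains to check that this $\ttheta$ lies in $\Theta$: but $\un{\ggamma} \in \un{\Gamma}(\Ss)$ means exactly $0 < \gamma_{\ii} < 1$ for $\ii \in \Ss$, which after selecting support coordinates with $R_\Ss$ is precisely the defining inequality of $\Theta$ in \eqref{eq:Theta}. Conversely, for $\ttheta \in \Theta$ the array $\ggamma(\ttheta)$ has support $\Ss$ and uniform margins by construction, and the $\Theta$-inequalities guarantee its entries lie strictly in $(0,1)$, so $\un{\ggamma}(\ttheta) \in \un{\Gamma}(\Ss)$; thus the map really lands in the claimed codomain. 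This establishes the affine bijection $\Theta \to \un{\Gamma}(\Ss)$.

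For the second half, the bijection $\ttheta \mapsto \ggamma(\ttheta)$ between $\Theta$ and $\Gamma(\Ss)$ is immediate once the first is in hand: by definition \eqref{eq:G:vartheta}, $\ggamma(\ttheta) = \vect^{-1}(\un{\ggamma}(\ttheta))$, i.e. $\ggamma(\cdot)$ is the composition of the bijection $\un{\ggamma}(\cdot): \Theta \to \un{\Gamma}(\Ss)$ with the bijection $\vect^{-1}: \un{\Gamma}(\Ss) \to \Gamma(\Ss)$ (the latter being a restriction of the global linear bijection $\vect^{-1}$, which carries $\un{\Gamma}(\Ss)$ onto $\Gamma(\Ss)$ by the very definition of $\un{\Gamma}(\Ss)$). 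A composition of bijections is a bijection, which finishes the proof.

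There is no serious obstacle here — the statement is, as the authors say, "a relatively obvious consequence of the construction." The one place that deserves a sentence of care is the surjectivity argument: one must not merely assert that every $\ggamma \in \Gamma(\Ss)$ differs from $\ggamma^{(q,\Ss)}$ by an element of $\bm \Delta^\circ(\Ss)$, but explicitly verify both the support condition ($\Supp(\ggamma - \ggamma^{(q,\Ss)}) \subseteq \Ss$, using that $\ggamma^{(q,\Ss)}$ has support \emph{equal to} $\Ss$, not merely contained in it) and the vanishing-margins condition, and then separately check that the resulting coefficient vector satisfies the strict inequalities cutting out $\Theta$ — so the whole argument is genuinely an ``iff''. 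Everything else is bookkeeping with the linear-algebraic facts already assembled in Sections \ref{sec:aff}--\ref{subsec:param}.
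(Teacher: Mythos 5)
Your proposal is correct and follows essentially the same route as the paper's own proof: injectivity from the full column rank of $A_\Ss$, surjectivity by writing $\ggamma - \ggamma^{(q,\Ss)}$ in the basis of $\bm \Delta^\circ(\Ss)$ and noting that the resulting $\ttheta$ lies in $\Theta$ by the very definition of $\Theta$, then transferring through the linear bijection $\vect^{-1}$. Your version merely spells out a few of the verifications (the support and zero-margin conditions on the difference, and the strict inequalities cutting out $\Theta$) that the paper leaves implicit.
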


\begin{proof} See Appendix \ref{app:proofs}. \end{proof}

\noindent Given this one-to-one correspondence between $\Theta$ and $\Gamma(\Ss)$, the unique $\Is$-projection of $\pp \in \Ps(\Ss)$ onto $\Gamma(\Ss)$ can be characterised in terms of its vector $\ttheta$. Specifically, if $\ggamma_\pp$ is the $\Is$-projection (\ref{eqn:copproj}) of $\pp$ on $\Gamma(\Ss)$, then we have $\ggamma_\pp = \ggamma(\ttheta_\pp)$ where
\begin{equation} \ttheta_\pp  \doteq \arg \min_{\ttheta \in \Theta} \Is(\ggamma(\ttheta) \| \pp). \label{eqn:thetastar}\end{equation}
Under Conditions \ref{cond:Gamma:S}-\ref{cond:Gamma:S:infinite}, such $\ttheta_\pp$ exists and is unique in $\Theta$, as $\ggamma_\pp$ exists and is unique in $\Gamma(\Ss)$. All this is illustrated below on two simple examples: one bivariate ($d=2$) and one trivariate ($d=3$).

\begin{example} \label{ex:example} Let $\pp$ be a $(3 \times 3)$ probability array with $\Ss_\pp = \{(i_1,i_2) \in [3]^2 = \{1,2,3\} \times \{1,2,3\} \text{ s.t.\ } i_1 \neq i_2\}$, viz.
	\begin{equation} \pp =\begin{pmatrix} 0 & p_{12} & p_{13} \\ p_{21} & 0 & p_{23} \\ p_{31} & p_{32} & 0 \end{pmatrix} \label{eqn:pxyzeros}\end{equation}
	for some positive probabilities $p_{12}, p_{13}, p_{21}, p_{23}, p_{31}, p_{32}$ summing to 1. On this support $\Ss_\pp$, the quasi-independence copula array is
	\[\ggamma^{(q,\Ss_\pp)}=\begin{pmatrix} 0 & \frac{1}{6} & \frac{1}{6} \\ \frac{1}{6} & 0 & \frac{1}{6} \\ \frac{1}{6} & \frac{1}{6} & 0 \end{pmatrix}, \]
	which is in fact directly the array defined by (\ref{eqn:qquasi}). Each row and column of any array of $\DDelta^\circ(\Ss_\pp)$ contains at most two non-zero entries, hence any single positive value propagates through the matrix via the zero row and column sum constraints, thereby uniquely determining all remaining entries. The set $\DDelta^\circ(\Ss_\pp)$ is thus unidimensional ($d_\circ = 1$), and we can take for its basis element:
	\[\Delta_1 \doteq \begin{pmatrix} 0 & 1 & -1 \\ -1 & 0 & 1 \\ 1 & -1 & 0 \end{pmatrix}. \]
	According to (\ref{eq:gamma:decomp}), any copula array in $\Gamma(\Ss_\pp)$ can be written as:
	\[\ggamma(\theta_1) = \ggamma^{(q,\Ss_\pp)} + \theta_1 \Delta_1 = \begin{pmatrix} 0 & \frac{1}{6} + \theta_1 & \frac{1}{6} - \theta_1 \\ \frac{1}{6} - \theta_1 & 0 & \frac{1}{6} + \theta_1 \\ \frac{1}{6} + \theta_1 & \frac{1}{6} - \theta_1 & 0 \end{pmatrix}, \]
	where $\theta_1$ is restricted to  $\Theta = (-1/6,1/6)$. Some basic algebra reveals that 
	\[\Is(\ggamma(\theta_1)||\pp) = \sum_{(i,j) \in \Ss_\pp} \gamma_{ij} \log \frac{\gamma_{ij}}{p_{ij}} = \left( \frac{1}{6} + \theta_1 \right) \log \frac{\left( \frac{1}{6} + \theta_1 \right)^3}{p_{12} p_{23} p_{31}} + \left( \frac{1}{6} - \theta_1 \right) \log \frac{\left( \frac{1}{6} - \theta_1 \right)^3}{p_{13} p_{21} p_{32}}\]
	admits a unique minimum for 
	\[\theta_{\pp;1} = \frac{1}{6}\, \frac{{\omega_\pp}^{1/3}-1}{{\omega_\pp}^{1/3}+1}, \qquad \text{ with } \omega_\pp =\frac{p_{13}p_{21}p_{32}}{p_{12}p_{23}p_{31}}. \]
	The copula array $\ggamma_\pp$ of $\pp$ in (\ref{eqn:pxyzeros}) is thus 
	\[ \ggamma_\pp= \ggamma(\theta_{\pp;1}) = \begin{pmatrix} 0 & \frac{1}{3}\frac{{\omega_\pp}^{1/3}}{1+{\omega_\pp}^{1/3}} &  \frac{1}{3}\frac{1}{1+{\omega_\pp}^{1/3}} \\ \frac{1}{3}\frac{1}{1+{\omega_\pp}^{1/3}} & 0 &  \frac{1}{3}\frac{{\omega_\pp}^{1/3}}{1+{\omega_\pp}^{1/3}} \\  \frac{1}{3}\frac{{\omega_\pp}^{1/3}}{1+{\omega_\pp}^{1/3}} & \frac{1}{3}\frac{1}{1+{\omega_\pp}^{1/3}} & 0 \end{pmatrix}.\]
	This shows that the dependence of any distribution of type (\ref{eqn:pxyzeros}) is entirely described by the `odds-ratio-like' quantity $\omega_\pp$. In particular, quasi-independence corresponding to the value $\theta_1 = 0$, we conclude that a probability array $\pp$ as in (\ref{eqn:pxyzeros}) reflects quasi-independence when $\omega_\pp =1$ or equivalently $p_{13}p_{21}p_{32}=p_{12}p_{23}p_{31}$, confirming \citet[equation (2.23)]{Goodman68}. \qed
\end{example}

\begin{example}  \label{ex:example2} Consider a trivariate $(d=3$) discrete vector $(X_1,X_2,X_3) \sim P$ with $r_1 = 4$, $r_2 =2$ and $r_3 =2 $, and probability array $\pp$ of the form:
	\begin{equation}  \pp =
	\left\{
	\begin{bmatrix}
		p_{111} & p_{121} \\
		0 & 0 \\
		p_{311} & p_{321} \\
		p_{411} & p_{421}
	\end{bmatrix}, 
	\begin{bmatrix}
		p_{112} & p_{122} \\
		p_{212} & p_{222} \\
		p_{312} & p_{322} \\
		p_{412} & p_{422}
	\end{bmatrix}
	\right\}. \label{eqn:pex2}
	\end{equation}
Here the array $\pp$ is represented as a collection of \( r_3 = 2 \) matrices of size \( (r_1 \times r_2)=(4 \times 2) \) -- each matrix corresponds to a fixed level of \( X_3 \), with entries \( p_{i_1 i_2 i_3} = \mathbb{P}(X_1 = i_1, X_2 = i_2, X_3 = i_3) \) arranged in standard matrix form. It is assumed that $p_{i_1 i_2 i_3} > 0$ for all $(i_1,i_2,i_3 ) \in \Rs = [4] \times [2] \times [2]$, except for $p_{211} = p_{221} = 0$. For this specific support $\Ss_\pp$, the quasi-independence copula array is
\begin{equation}  \ggamma^{(q,\Ss_\pp)} =
\left\{
\begin{bmatrix}
	1/12 & 1/12 \\
	0 & 0 \\
	1/12 & 1/12 \\
	1/12 & 1/12 
\end{bmatrix}, 
\begin{bmatrix}
	1/24 & 1/24 \\
	1/8 & 1/8 \\
	1/24 & 1/24 \\
	1/24 & 1/24 
\end{bmatrix}
\right\}, \label{eqn:coparrayqSp}
\end{equation}
which is the $\Is$-projection of the `quasi-uniform' array $\qq$ \eqref{eqn:qquasi} (that is, the probability array \eqref{eqn:pex2} with all non-zero entries equal to $1/14$) onto the relevant copula Fr\'echet class (\ref{eq:Gamma}); viz.
\begin{equation} \Gamma = \Fs({\bm u}^{(1)},{\bm u}^{(2)},{\bm u}^{(3)}) = \left\{ \ggamma \in \Ps : \gamma^{(1)}_i = \frac{1}{4} \text{ for all } i \in [4], \gamma^{(2)}_i = \gamma^{(3)}_i = \frac{1}{2} \text{ for all } i \in [2] \right\}. \label{eqn:Gamma422} \end{equation}
In this configuration, with two structural zeros and 8 marginal constraints, the 0-1 matrix $C_{\Ss_\pp}$ in (\ref{eq:linsystem}) has 10 rows for $r_1 r_2 r_3 =16$ columns, and can easily be written explicitly (Appendix \ref{app:ex2}). As suggested in Section \ref{subsec:basis}, we used the R procedure {\tt pracma::null} to identify the null space of $C_{\Ss_\pp}$, which revealed its dimension $d_\circ = 8$ (in particular, Condition \ref{cond:Gamma:S:infinite} is fulfilled for the support $\Ss_\pp$). From (\ref{eq:G:vartheta}), it appears that the copula array of any probability array $\pp$ like (\ref{eqn:pex2}) can be written as 
\[\ggamma_\pp =  \ggamma^{(q,\Ss_\pp)} + \vect^{-1}(A_{\Ss_\pp} \ttheta), \]
for some unique $\ttheta \in \Theta \subset \R^8$ and a given $(16 \times 8)$-matrix  $A_{\Ss_\pp}$ encoding a basis for the null space of $C_{\Ss_\pp}$ -- the procedure {\tt pracma::null} returned such as matrix, see Appendix \ref{app:ex2}. Each $\ttheta \in \Theta$ unequivocally represents a specific dependence structure compatible with the support of $\pp$ in (\ref{eqn:pex2}). In particular, $ \ttheta = \zero$ captures the quasi-independence of $(X_1,X_2,X_3)$; see Section \ref{subsec:quasitest}. \qed 
\end{example}

\subsubsection{Differentiability results}

Consider now the analogue to (\ref{eqn:thetastar}) seen as a function from $\un{\Ps}(\Ss)$ to $\Theta$; viz.
\begin{equation} \ttheta^*(\un{\bm p}) \doteq \arg \inf_{\bm \theta \in \Theta} \Is(\ggamma(\bm \theta) \| \vect^{-1}(\un{\bm p})). 
	\label{eq:vartheta:*}  \end{equation}
In other words, $ \ttheta^*(\un{\bm p})$ is the (unique) vector $\ttheta$ identifying the $\Is$-projection onto $\Gamma(\Ss)$ of the probability array whose vectorisation is $\un{\bm p}$; or equivalently, $\ttheta^*(\vect(\pp)) = \ttheta_\pp$.
In the same vein, let $\un{\ggamma}^*: \un{\Ps}(\Ss) \to \un{\Gamma}(\Ss)$:
\begin{equation} \un{\ggamma}^*(\un{\bm p}) = \un{\ggamma} (\ttheta^*(\un{\bm p})), 	\label{eq:gamma:*} \end{equation}
where $\un{\ggamma}$ is given (\ref{eq:un:G:vartheta}), and which returns the vectorised copula array of the probability array whose vectorisation is $\un{\bm p}$; i.e.\ $\un{\ggamma}^*(\vect(\pp)) = \un{\ggamma}(\ttheta_\pp)$ and thus  $\vect^{-1}(\un{\ggamma}^*(\vect(\pp))) = \ggamma(\ttheta_\pp) = \ggamma_\pp$, the copula array of $\pp$. The following result -- essentially a consequence of \citet[Corollary 31]{Geenens24-diff} -- establishes that $\ttheta^*$ is a continuously differentiable function of $\un{\bm p}$ over the whole of $\un{\Ps}(\Ss)$, and provides its Jacobian.

\begin{proposition} \label{prop:diff} Assume Conditions \ref{cond:Gamma:S}-\ref{cond:Gamma:S:infinite}. Then, 
	the function $\ttheta^*$ (\ref{eq:vartheta:*}) is continuously differentiable at any $\un{\bm p} \in \un{\Ps}(\Ss)$, and its Jacobian matrix is 
	\begin{equation} J_{\ttheta^*}(\un{\bm p}) = \left[\transp{\widetilde{A}_\Ss} \diag^{-1}\left( R_\Ss \un{\ggamma}^*(\un{\bm p})\right) \widetilde{A}_\Ss\right]^{-1} \transp{\widetilde{A}_\Ss} \diag^{-1}\left( R_\Ss \un{\bm p}\right) R_\Ss \label{eqn:jactheta} \end{equation}
	with the matrices $R_\Ss$, $A_\Ss$ and $\widetilde{A}_\Ss = R_\Ss  A_\Ss$ defined in Section \ref{subsec:param}.
\end{proposition}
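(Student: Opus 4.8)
The plan is to exhibit $\ttheta^*(\un{\bm p})$ as the unique interior critical point of a smooth, strictly convex objective and then apply the implicit function theorem to the resulting first-order condition. For $\bm\theta \in \Theta$ and $\un{\bm p} \in \un{\Ps}(\Ss)$, write
\[ f(\bm\theta,\un{\bm p}) \doteq \Is(\ggamma(\bm\theta) \| \vect^{-1}(\un{\bm p})) = \sum_j g_j(\bm\theta)\,\bigl(\log g_j(\bm\theta) - \log h_j\bigr), \]
where $\bm g(\bm\theta) \doteq R_\Ss\un{\ggamma}(\bm\theta) = R_\Ss\un{\ggamma}^{(q,\Ss)} + \widetilde{A}_\Ss\bm\theta$ collects the support entries of $\ggamma(\bm\theta)$ and $\bm h \doteq R_\Ss\un{\bm p}$ those of $\pp$, all strictly positive. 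Then $f$ is $C^\infty$ near any such $(\bm\theta,\un{\bm p})$, and its Hessian in $\bm\theta$ equals $\transp{\widetilde{A}_\Ss}\diag^{-1}(\bm g(\bm\theta))\widetilde{A}_\Ss$, which I will show is positive definite; hence $f(\cdot,\un{\bm p})$ is strictly convex on the convex open set $\Theta$ and its minimiser $\ttheta^*(\un{\bm p})$ is its only stationary point there. This minimiser is \emph{interior} to $\Theta$: by Proposition~\ref{prop:sklar:like}$(ii)$, $\ggamma(\ttheta^*(\un{\bm p})) = \ggamma_\pp$ has support exactly $\Ss$, so all its support entries lie in $(0,1)$ -- which is precisely the strict-inequality system defining $\Theta$ in~\eqref{eq:Theta}. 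Consequently the first-order condition $\nabla_{\bm\theta} f(\ttheta^*(\un{\bm p}),\un{\bm p}) = \bm 0$ holds.

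Next I would write out that condition explicitly. Differentiation gives $\partial f/\partial\theta_k = \sum_j [\widetilde{A}_\Ss]_{jk}\bigl(\log g_j(\bm\theta) - \log h_j + 1\bigr)$, and the constant term drops out because $\sum_j [\widetilde{A}_\Ss]_{jk} = \sum_{\bm i \in \Rs}[\Delta_k]_{\bm i} = 0$ for every $k$: indeed $\Delta_k \in \bm\Delta^\circ(\Ss)$ has null univariate margins and the total mass of any array equals the sum of any one of them. Thus $\nabla_{\bm\theta} f(\bm\theta,\un{\bm p}) = F(\bm\theta,\un{\bm p})$ with
\[ F(\bm\theta,\un{\bm p}) \doteq \transp{\widetilde{A}_\Ss}\Bigl(\log\bigl(R_\Ss\un{\ggamma}^{(q,\Ss)} + \widetilde{A}_\Ss\bm\theta\bigr) - \log\bigl(R_\Ss\un{\bm p}\bigr)\Bigr) = \bm 0, \]
the logarithm being applied entrywise. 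This map $F$ is $C^\infty$ on the open set where $R_\Ss\un{\bm p} > \bm 0$ and $\bm g(\bm\theta) > \bm 0$, and $\ttheta^*(\un{\bm p})$ is its unique zero in $\Theta$.

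It then remains to apply the implicit function theorem at $(\ttheta^*(\un{\bm p}),\un{\bm p})$. The partial Jacobian $\partial F/\partial\bm\theta = \transp{\widetilde{A}_\Ss}\diag^{-1}(\bm g(\bm\theta))\widetilde{A}_\Ss$ equals, at the zero, $\transp{\widetilde{A}_\Ss}\diag^{-1}(R_\Ss\un{\ggamma}^*(\un{\bm p}))\widetilde{A}_\Ss$; this is positive definite because the diagonal weights are strictly positive and $\widetilde{A}_\Ss = R_\Ss A_\Ss$ has full column rank $d_\circ$ -- the columns of $A_\Ss$ are the linearly independent basis vectors $\un{\Delta}_1,\dots,\un{\Delta}_{d_\circ}$ of $\un{\bm\Delta}^\circ(\Ss)$, and $R_\Ss$ only deletes rows that vanish in every $\un{\Delta}_k$, so it is injective on $\Ran(A_\Ss)$. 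Since $F$ depends on $\un{\bm p}$ only through $-\log(R_\Ss\un{\bm p})$, one has $\partial F/\partial\un{\bm p} = -\transp{\widetilde{A}_\Ss}\diag^{-1}(R_\Ss\un{\bm p})R_\Ss$, and the implicit function theorem yields that $\ttheta^*$ is $C^\infty$ (in particular continuously differentiable) near $\un{\bm p}$, with
\[ J_{\ttheta^*}(\un{\bm p}) = -\Bigl(\tfrac{\partial F}{\partial\bm\theta}\Bigr)^{-1}\tfrac{\partial F}{\partial\un{\bm p}} = \bigl[\transp{\widetilde{A}_\Ss}\diag^{-1}(R_\Ss\un{\ggamma}^*(\un{\bm p}))\widetilde{A}_\Ss\bigr]^{-1}\transp{\widetilde{A}_\Ss}\diag^{-1}(R_\Ss\un{\bm p})R_\Ss, \]
i.e.\ \eqref{eqn:jactheta}. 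One small point to close is that the local solution furnished by the implicit function theorem coincides with the global minimiser $\ttheta^*$; this is immediate since, for every nearby $\un{\bm p}'$, $f(\cdot,\un{\bm p}')$ stays strictly convex on $\Theta$, so a stationary point in $\Theta$ must be the minimiser.

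The step I expect to be the main obstacle is the invertibility of $\partial F/\partial\bm\theta$ at the zero -- that is, the full-column-rank property of $\widetilde{A}_\Ss$ together with the fact that the support entries of $\ggamma_\pp$ are bounded away from $0$ -- alongside the interiority of $\ttheta^*(\un{\bm p})$ in $\Theta$, which is what makes the first-order condition available at all and which rests on Proposition~\ref{prop:sklar:like}$(ii)$. The remaining computations are routine. Alternatively, one may simply invoke \citet[Corollary~31]{Geenens24-diff}, which already establishes the $C^1$ property and the Jacobian of $\Is$-projections of this kind, the argument above being its specialisation to the present affine parameterisation.
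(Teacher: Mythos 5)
Your proof is correct, but it takes a genuinely different route from the paper's. The paper reduces the problem to the support coordinates $\bm p_\Ss = R_\Ss\un{\bm p}$, observes via Lemma~\ref{lem:bij} that $\ttheta \mapsto R_\Ss\un{\ggamma}(\ttheta)$ is an affine bijection onto a convex set of positive probability vectors, and then simply invokes \citet[Corollary~31]{Geenens24-diff} with $\phi(x)=x\log x$ to obtain both the $C^1$ property and the Jacobian, finishing with the chain rule to reinstate the factor $R_\Ss$. You instead give a self-contained first-principles argument: you compute the gradient and Hessian of $\ttheta \mapsto \Is(\ggamma(\ttheta)\|\pp)$ explicitly, verify that the $+1$ term in the gradient vanishes because the columns of $\widetilde{A}_\Ss$ sum to zero, establish strict convexity from the full column rank of $\widetilde{A}_\Ss$ (correctly noting that $R_\Ss$ is injective on $\Ran(A_\Ss)$ since it only deletes coordinates that vanish on all of $\un{\bm\Delta}^\circ(\Ss)$), use Proposition~\ref{prop:sklar:like}$(ii)$ to place the minimiser in the interior of $\Theta$ so that the first-order condition is available, and then apply the implicit function theorem. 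Your computations check out and the resulting Jacobian matches~\eqref{eqn:jactheta}. What the paper's route buys is brevity and reuse of a result stated for general $\phi$-divergences; what yours buys is transparency -- it isolates exactly which hypotheses do the work (interiority of $\ttheta_\pp$, positivity of the support entries of $\ggamma_\pp$, full column rank of $\widetilde{A}_\Ss$), it upgrades the conclusion from $C^1$ to $C^\infty$, and, since $F$ is defined on an open neighbourhood of $\un{\Ps}(\Ss)$ in the ambient space $\R^{\prod_{\ell=1}^d r_\ell}$, it gives a clean meaning to the full $(d_\circ \times \prod_{\ell=1}^d r_\ell)$ Jacobian with respect to all coordinates. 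Your closing remark that the argument specialises \citet[Corollary~31]{Geenens24-diff} correctly identifies the paper's actual proof.
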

\begin{proof} See Appendix \ref{app:proofs}. \end{proof} 

\begin{remark} \label{rmk:rect} The construction described in this section is valid for any general support $\Ss \subseteq \Rs$; that is, with potentially irregular patterns of zeros in $\pp \in \Ps(\Ss)$. In more structured contexts, however, specific natural parameterisations may emerge for $\Gamma(\Ss)$. In particular, for $\Ss = \Rs$ (hyper-rectangular support) in (\ref{eq:DDelta}), ${\DDelta}^\circ(\Ss)={\DDelta}^\circ(\Rs)$ is the linear subspace of $\R^{r_1 \times \ldots \times r_d}$ comprising all arrays whose entries sum to zero along each slice. The dimension of this well-studied subspace is $d_\circ = \prod_{\ell =1}^d (r_\ell -1)$, with a `canonical' basis which may be defined as follows: let $\jj = (j_1,\ldots,j_d)  \in \bigtimes_{\ell=1}^d [r_\ell -1]$, and define the array $\Delta_\jj$ entrywise as 
	\[  \Delta_{\jj;\ii} = \begin{cases}
		1  &\text{ if } \ii = \jj,\\
		-1 &\text{ if exactly one entry } i_\ell = r_\ell \text{ and all other entries of $\ii$ match $\jj$} \\
		1 & \text{ if exactly two entries } i_{\ell_1} = r_{\ell_1},\  i_{\ell_2} = r_{\ell_2}\text{ and all other entries of $\ii$ match $\jj$}  \\
		\vdots & \vdots \\ 
		(-1)^d & \text{ if } \ii = (r_1,\ldots,r_d), \\
		0 & \text{ otherwise.}
	\end{cases}  \]
	For example, for the $(3 \times 3)$-matrices $(d=2$) of full supports, the dimension of $\DDelta^\circ(\Rs)$ is $d_\circ = 4$ and the canonical basis elements can be listed as
	\begin{equation} \Delta_{(1,1)} = \begin{pmatrix} 1 & 0 & -1 \\ 0 & 0 & 0 \\ -1 & 0 & 1\end{pmatrix}, \Delta_{(1,2)} = \begin{pmatrix} 0 & 1 & -1 \\ 0 & 0 & 0 \\ 0 & -1 & 1\end{pmatrix}, \Delta_{(2,1)} = \begin{pmatrix} 0 & 0 & 0 \\ 1 & 0 & -1 \\ -1 & 0 & 1\end{pmatrix}, \Delta_{(2,2)} = \begin{pmatrix} 0 & 0 & 0 \\ 0 & 1 & -1 \\ 0 & -1 & 1\end{pmatrix}.  \label{eqn:canon33} \end{equation}
	In addition, for $\Ss = \Rs$, $R_\Ss = I$ (the $(\prod_{\ell = 1}^d r_\ell \times \prod_{\ell = 1}^d r_\ell)$-identity matrix), and $\widetilde{A}_\Rs = A_\Rs$ is a matrix containing a (vectorised version of a) canonical basis of $\DDelta^\circ(\Rs)$ as described above. The Jacobian matrix of Proposition \ref{prop:diff} then simplifies to
	\begin{align*}
		J_{\ttheta^*}(\un{\bm p}) & = \left[\transp{{A}_\Rs} \diag^{-1}\left( \un{\ggamma}^*(\un{\bm p})\right) {A}_\Rs\right]^{-1} \transp{{A}_\Rs} \diag^{-1}\left( \un{\bm p}\right). 
	\end{align*} \qed
\end{remark}

\ppn Finally, see that 
\begin{equation} \un{\ggamma}^*(\un{\bm p}) = \un{\ggamma}^{(q,\Ss)} + A_\Ss \ttheta^*(\un{\bm p}),  \label{eqn:Jgamma} \end{equation}  
which is a simple re-arrangement of (\ref{eq:un:G:vartheta}) and (\ref{eq:gamma:*}). It directly follows from this and Proposition \ref{prop:diff} that:
\begin{proposition} \label{prop:diffgamma} Assume Conditions \ref{cond:Gamma:S}-\ref{cond:Gamma:S:infinite}. Then, 
	the function $\un{\ggamma}^*$ (\ref{eq:gamma:*}) is continuously differentiable at any $\un{\bm p} \in \un{\Ps}(\Ss)$, and its Jacobian matrix is 
\begin{equation} J_{\un{\ggamma}^*}(\un{\bm p}) = A_\Ss J_{\ttheta^*}(\un{\bm p})  =  A_\Ss \left[\transp{\widetilde{A}_\Ss} \diag^{-1}\left( R_\Ss \un{\ggamma}^*(\un{\bm p})\right) \widetilde{A}_\Ss\right]^{-1} \transp{\widetilde{A}_\Ss} \diag^{-1}\left( R_\Ss \un{\bm p}\right) R_\Ss, \label{eq:Jacobian:gamma:*} \end{equation}
	with the matrices $R_\Ss$, $A_\Ss$ and $\widetilde{A}_\Ss = R_\Ss  A_\Ss$ defined in Section \ref{subsec:param}.
\end{proposition}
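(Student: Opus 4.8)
The plan is to derive Proposition \ref{prop:diffgamma} as an immediate corollary of Proposition \ref{prop:diff}, exploiting the affine relation \eqref{eqn:Jgamma}. First I would recall that, by construction (combining \eqref{eq:un:G:vartheta} and \eqref{eq:gamma:*}), the map $\un{\ggamma}^*$ factors as
\[ \un{\ggamma}^*(\un{\bm p}) = \un{\ggamma}^{(q,\Ss)} + A_\Ss\,\ttheta^*(\un{\bm p}), \qquad \un{\bm p} \in \un{\Ps}(\Ss), \]
i.e.\ $\un{\ggamma}^*$ is the composition of $\ttheta^*:\un{\Ps}(\Ss)\to\Theta$ with the fixed affine map $\bm\theta \mapsto \un{\ggamma}^{(q,\Ss)} + A_\Ss\bm\theta$ from $\R^{d_\circ}$ to $\R^{\prod_{\ell=1}^d r_\ell}$, in which $\un{\ggamma}^{(q,\Ss)}$ is a constant vector and $A_\Ss$ a constant matrix (both fixed in Section \ref{subsec:param}). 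Lemma \ref{lem:bij} ensures this is a well-defined map onto $\un{\Gamma}(\Ss)$.

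Next I would invoke Proposition \ref{prop:diff}: under Conditions \ref{cond:Gamma:S}--\ref{cond:Gamma:S:infinite}, $\ttheta^*$ is continuously differentiable on $\un{\Ps}(\Ss)$ with Jacobian given by \eqref{eqn:jactheta}. Since the outer affine map is everywhere $C^\infty$ with constant Jacobian $A_\Ss$, the chain rule gives at once that $\un{\ggamma}^*$ is continuously differentiable on $\un{\Ps}(\Ss)$ and that $J_{\un{\ggamma}^*}(\un{\bm p}) = A_\Ss\,J_{\ttheta^*}(\un{\bm p})$. Substituting the explicit form \eqref{eqn:jactheta} of $J_{\ttheta^*}(\un{\bm p})$ into this identity yields exactly \eqref{eq:Jacobian:gamma:*}, which completes the argument.

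I do not anticipate a genuine obstacle here: the substantive analytic work — establishing $C^1$-regularity of the $\Is$-projection as a function of the probability array and computing its Jacobian — has already been done in Proposition \ref{prop:diff} (itself resting on \citet[Corollary 31]{Geenens24-diff}). The only points deserving an explicit line are that $\un{\ggamma}^*$ is legitimately obtained by post-composing $\ttheta^*$ with an affine map (so differentiability transfers through composition) and that this affine map's Jacobian is precisely $A_\Ss$; everything else is bookkeeping.
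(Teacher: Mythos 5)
Your proposal is correct and matches the paper's own reasoning exactly: the paper derives Proposition \ref{prop:diffgamma} as an immediate consequence of the affine identity \eqref{eqn:Jgamma} and Proposition \ref{prop:diff}, via the chain rule with the constant Jacobian $A_\Ss$ of the outer affine map. Nothing is missing.
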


\begin{remark}  \label{rem:invbasis}   The $(\prod_{\ell = 1}^d r_\ell \times \prod_{\ell = 1}^d r_\ell)$-matrix $J_{\un{\ggamma}^*}(\un{\bm p})$  is invariant under a change of basis for $\DDelta^\circ(\Ss)$. Indeed, let $\un{\Delta}_1', \dots, \un{\Delta}_{d_\circ}'$ be another basis of $\un{\bm \Delta}^\circ(\Ss)$. Call $A_\Ss'$ the $\left((\prod_{\ell = 1}^d r_\ell) \times d_\circ\right)$- matrix whose columns are $\un{\Delta}_1', \dots, \un{\Delta}_{d_\circ}'$. Then, there exists a $(d_\circ \times d_\circ)$-invertible matrix $B$ such $ A_\Ss = A'_\Ss B$ and, defining $\widetilde{A'}_\Ss =  R_\Ss A'_\Ss$,
\begin{align*} J_{\un{\ggamma}^*}(\un{\bm p}) & = A'_\Ss B \left[\transp{B}\transp{\widetilde{A'}_\Ss} \diag^{-1}\left( R_\Ss \un{\ggamma}^*(\un{\bm p})\right) \widetilde{A'}_\Ss B \right]^{-1} \transp{B}\transp{\widetilde{A'}_\Ss} \diag^{-1}\left( R_\Ss \un{\bm p}\right) R_\Ss \\
	& = A'_\Ss B B^{-1}\left[\transp{\widetilde{A'}_\Ss} \diag^{-1}\left( R_\Ss \un{\ggamma}^*(\un{\bm p})\right) \widetilde{A'}_\Ss  \right]^{-1} [\transp{B}]^{-1}\transp{B}\transp{\widetilde{A'}_\Ss} \diag^{-1}\left( R_\Ss \un{\bm p}\right) R_\Ss \\
	& = A'_\Ss \left[\transp{\widetilde{A'}_\Ss} \diag^{-1}\left( R_\Ss \un{\ggamma}^*(\un{\bm p})\right) \widetilde{A'}_\Ss  \right]^{-1} \transp{\widetilde{A'}_\Ss} \diag^{-1}\left( R_\Ss \un{\bm p}\right) R_\Ss.
\end{align*} 
We recover (\ref{eq:Jacobian:gamma:*}) where $A_\Ss$ is simply replaced by $A'_\Ss$. \qed
\end{remark} 

\section{The empirical discrete copula} \label{sec:empcop}

Suppose that we have $n$ copies $\{\XX_1,\dots,\XX_n\}$ of a $d$-dimensional discrete random vector $\XX = (X_1,\ldots,X_d) \in \Xs$ drawn from an unknown distribution $P$, whose probability array $\pp$ belongs to $\Ps(\Ss_\pp)$ for some support $\Ss_\pp \subseteq \Rs$ defined as in (\ref{eq:Rs}). It will be assumed that $\Ss_\pp$ is known: this is usually the case in practice, as entries $p_{\ii} = 0$ typically arise from strict physical incompatibilities between some specific values of $X_1,\ldots,X_d$ and these are normally well identified -- \cite{Goodman68} calls them `{\it a priori considerations}'.  Knowledge of $\Ss_\pp$ allows verification of $\Gamma(\Ss_\pp) \neq \emptyset$ (Condition \ref{cond:Gamma:S}),\footnote{While finding explicit criteria for this condition remains a non-trivial combinatorial problem when $d>2$ \citep{Fontana25}, one can easily check it in practice by running IPF (Remark \ref{rmk:IPF}) on any probability array with support $\Ss_\pp$ (e.g., $\qq$ in (\ref{eqn:qquasi}) with $\Ss = \Ss_\pp$) in an attempt to compute its $\Is$-projection onto $\Gamma$: convergence occurs if and only if Condition \ref{cond:Gamma:S} holds.} which ensures the existence and uniqueness of the copula array $\ggamma_\pp$ and thus the copula measure $G_\pp$ of $P$ (Proposition \ref{prop:sklar:like}/Definition \ref{def:cop}). If that is the case, we would like to estimate $\ggamma_\pp$ from a realisation of $\{\XX_1,\dots,\XX_n\}$, as a `margin-free' representation of the inner dependence structure of $\XX$.

\ppn A natural estimator of $\pp$ is the empirical probability array $\pp_n$ defined entrywise by 
\begin{equation} p_{\ii;n} = \frac{1}{n} \sum_{k=1}^n \indic{\XX_k =  (x_{i_1}^{(1)}, \dots, x_{i_d}^{(d)})}, \qquad \bm i = (i_1, i_2, \dots i_d) \in \Rs.   \label{eq:pn} \end{equation}
If $\pp_n$ is a strongly consistent estimator of $\pp$ (for example, when $\{\XX_1,\ldots,\XX_n\}$ is a random sample from $P$), we have $\pp_n \in \Ps(\Ss_\pp)$ with probability 1 as $n \to \infty$. However, in finite samples, it may be the case that $p_{\ii;n} = 0$ for some $\ii \in \Rs$ such that $p_{\ii} > 0$ due to random sampling. In other words, in practice there is no guarantee that $\pp_n \in \Ps(\Ss_\pp)$ or even $\pp_n \in \Ps$ (when $p_{i;n}^{(\ell)} = 0$ for some $\ell \in [d]$ and $i \in [r_\ell]$). If $\min_{\ii \in \Ss_\pp} p_{\ii;n} = 0$, we may want to use some `smoothed' version of $\pp_n$ which would slightly spread the observed probability mass over the empty cells of $\Ss_\pp$. For example, we may use the estimator\footnote{Other approaches may be considered, for example see \cite{Fienberg73} or \cite{Painsky24}.}
\begin{equation} \widehat{\pp}_n = \frac{n}{n+1} \pp_n + \frac{1}{n+1} \qq, \label{eqn:hatpn} \end{equation}
which is a $(\frac{n}{n+1},\frac{1}{n+1})$-mixture between the empirical probability array (\ref{eq:pn}) and $\qq \in \Ps(\Ss_\pp)$ the probability array defined by (\ref{eqn:qquasi}). As the resulting $\widehat{\pp}_n$ is guaranteed to belong to $\Ps(\Ss_\pp)$ for all $n \geq 1$, under the condition $\Gamma(\Ss_\pp)\neq \emptyset$ it admits a unique $\Is$-projection on $\Gamma$ (Proposition \ref{prop:sklar:like}).\footnote{If the raw empirical cell probabilities are all positive on $\Ss_\pp$ (i.e., $\min_{\ii \in \Ss_\pp} p_{\ii;n} > 0$), then one may safely take $\widehat{\pp}_n = \pp_n$ and all results go through unchanged. For uniformity of exposition, and since a small amount of smoothing often improves finite‐sample performance \citep{Fienberg73}, we shall continue to work with the mixture estimator \eqref{eqn:hatpn}.} We define:

\begin{definition} Let $\{\XX_1,\dots,\XX_n\}$ be a sample drawn from $P$, where the support $\Ss_\pp$ of the probability array $\pp$ of $P$ is such that $\Gamma(\Ss_\pp)\neq \emptyset$. Then, the copula array of $\widehat{\pp}_n$,viz. 
	\begin{equation} \widehat{\ggamma}_n \doteq \Is_\Gamma(\widehat{\pp}_n) = \arg \inf_{\ggamma \in \Gamma} \Is(\ggamma\|\widehat{\pp}_n) , \label{eqn:empcoparray}\end{equation}
	is called the empirical copula array of the sample, and the corresponding probability measure $\widehat{G}_n$ supported on $\bigtimes_{\ell \in [d]} \frac{[r_\ell]}{r_\ell + 1}$ is called the empirical discrete copula measure of the sample. In turn, the associated cumulative distribution function and probability mass function will be called the empirical discrete copula and empirical copula pmf of the sample.
\end{definition}

\noindent Assume Condition \ref{cond:Gamma:S:infinite} is fulfilled for $\Ss_\pp$ and suppose that a basis $\Delta_1, \dots, \Delta_{d_\circ}$ of $\bm \Delta^\circ(\Ss_\pp)$ has been identified, defining a basis matrix $A_{\Ss_\pp}$ and a parameter space $\Theta$ as in Section \ref{subsec:param}. As $\widehat{\ggamma}_n \in \Gamma(\Ss_\pp)$, it follows from Lemma \ref{lem:bij} that there exists a unique $\widehat{\ttheta}_n \in \Theta$ such that 
\begin{equation} \widehat{\ggamma}_n = \ggamma(\widehat{\ttheta}_n ), \label{eqn:hatgamma} \end{equation} 
and by definition (\ref{eqn:empcoparray}), 
\begin{equation} \widehat{\ttheta}_n = \arg \inf_{\ttheta \in \Theta} \Is(\ggamma(\ttheta) \| \widehat{\pp}_n). \label{eqn:hattheta}  \end{equation}
As $\dim(\Theta) = d_\circ < \infty$, we may regard $\{\ggamma(\ttheta): \ttheta \in \Theta\}$ as a parametric model which we try to fit to the sample $\{\XX_1,\ldots,\XX_n\}$. From this perspective, (\ref{eqn:hatgamma})-(\ref{eqn:hattheta}) is a `minimum divergence estimator' (MDE) \citep{Basu11}. Of course, in general the sample was {\it not} generated by a copula (in particular: $\pp \not\in \Gamma$), hence the `model' $\ggamma(\ttheta)$ is (knowingly) misspecified. Such a problem of MDE under model misspecification may be attacked via the approach of \cite{Jimenez11}, which was revisited in \cite{Geenens24-diff}. Through Proposition \ref{prop:diff}, we can establish the asymptotic properties of $\widehat{\ttheta}_n$ in (\ref{eqn:hattheta}).

\begin{proposition} \label{prop:thetahat} Let $\{\XX_1,\dots,\XX_n\}$ be a sample drawn from $P$, where the support $\Ss_\pp$ of the probability array $\pp$ of $P$ satisfies Conditions \ref{cond:Gamma:S} and \ref{cond:Gamma:S:infinite}. Let $\widehat{\un{\bm p}}_n \doteq \vect(\widehat{\pp}_n)$ and $\un{\bm \ggamma}_\pp = \vect(\ggamma_\pp)$. 
	\begin{enumerate}[(a)] 
		\item If the estimator (\ref{eqn:hatpn}) is a (strongly) consistent estimator of $\pp$; that is, $\widehat{\pp}_n  \overset{\text{P}}{\to} \pp$ ($\widehat{\pp}_n  \overset{\text{as}}{\to} \pp$) as $n \to \infty$, then the estimator $\widehat{\ttheta}_n$ in (\ref{eqn:hattheta}) is a (strongly) consistent estimator of $\ttheta_\pp$ in (\ref{eqn:thetastar}); that is,
		\[ \widehat{\ttheta}_n \overset{\text{P}}{\to} \ttheta_\pp \quad (\widehat{\ttheta}_n \overset{\text{as}}{\to} \ttheta_\pp)\qquad \text{ as } n \to \infty;\]
		\item If $r_n\,\left(\un{\widehat{\bm p}}_n- \un{\bm p}\right) \toL \bm Z$ as $n \to \infty$, for some  $r_n \to \infty$ and a given $(\prod_{\ell = 1}^d r_\ell)$-dimensional random vector $\bm Z$; then:
		\[  r_n\,\left(\widehat{\ttheta}_n - \ttheta_\pp \right) \toL 	J_{\ttheta^*}(\un{\bm p}) \bm Z, \] 
		where $J_{\ttheta^*}(\un{\bm p})$ is the Jacobian matrix defined in (\ref{eqn:jactheta}) with $\Ss = \Ss_\pp$;
		\item In particular, if $\{\XX_1,\dots,\XX_n\}$ is an i.i.d.\ sample drawn from $P$ so that
		\begin{equation} \sqrt{n}\,\left(\un{\widehat{\bm p}}_n- \un{\bm p}\right) \toL \Ns_{\prod_{\ell = 1}^d r_\ell}\left(\bm 0,\diag(\un{\bm p}) - \un{\bm p} \, \transp{\un{\bm p}}\right), \label{eqn:empprop} \end{equation}
		then
		\begin{equation*} \sqrt{n}\,\left(\widehat{\ttheta}_n - \ttheta_\pp \right) \toL \Ns_{d_\circ}\left(\bm 0, \bm \Sigma_\ttheta\right) \qquad \text{as } n \to \infty, \label{eqn:asymptnormtheta} \end{equation*}
		where 
		\begin{equation} \bm \Sigma_\ttheta = \left[\transp{\widetilde{A}_{\Ss_\pp}} \diag^{-1}\left( R_{\Ss_\pp} \un{\ggamma}_\pp\right) \widetilde{A}_{\Ss_\pp}\right]^{-1} \left[\transp{\widetilde{A}_{\Ss_\pp}} \diag^{-1}\left( R_\Ss \un{\bm p}\right) \widetilde{A}_{\Ss_\pp} \right]\left[\transp{\widetilde{A}_{\Ss_\pp}} \diag^{-1}\left( R_{\Ss_\pp} \un{\ggamma}_\pp\right) \widetilde{A}_{\Ss_\pp}\right]^{-1}, \label{eqn:Sigma} \end{equation}
		with the matrices $R_{\Ss_\pp}$, $A_{\Ss_\pp}$ and $\widetilde{A}_{\Ss_\pp} = R_{\Ss_\pp}  A_{\Ss_\pp}$ are as defined in Section \ref{subsec:param}.
	\end{enumerate}
\end{proposition}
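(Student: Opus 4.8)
The plan is to route all three claims through the deterministic map $\ttheta^*$ of \eqref{eq:vartheta:*} and its differentiability, Proposition~\ref{prop:diff}. The preliminary fact I would record first is that the smoothing in \eqref{eqn:hatpn} forces $\widehat{\pp}_n \in \Ps(\Ss_\pp)$ for \emph{every} $n$, hence $\widehat{\un{\bm p}}_n = \vect(\widehat{\pp}_n) \in \un{\Ps}(\Ss_\pp)$, the set on which $\ttheta^*$ is defined and continuously differentiable. Since $\ttheta^*(\vect(\pp)) = \ttheta_\pp$ and $\widehat{\ttheta}_n = \ttheta^*(\widehat{\un{\bm p}}_n)$ by \eqref{eqn:hattheta} and \eqref{eq:vartheta:*}, each part reduces to transporting convergence of $\widehat{\un{\bm p}}_n$ through $\ttheta^*$.

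For part~(a), since $\vect(\cdot)$ is a linear bijection, $\widehat{\pp}_n \overset{\text{P}}{\to}\pp$ (resp.\ a.s.) gives $\widehat{\un{\bm p}}_n \overset{\text{P}}{\to}\un{\bm p}$ (resp.\ a.s.) with every term in $\un{\Ps}(\Ss_\pp)$, and continuity of $\ttheta^*$ at $\un{\bm p}$ plus the continuous mapping theorem yield $\widehat{\ttheta}_n = \ttheta^*(\widehat{\un{\bm p}}_n) \to \ttheta_\pp$ in the same mode. For part~(b), I would apply the first-order delta method at $\un{\bm p}$: differentiability of $\ttheta^*$ there with Jacobian $J_{\ttheta^*}(\un{\bm p})$ gives $\ttheta^*(\widehat{\un{\bm p}}_n) - \ttheta^*(\un{\bm p}) = J_{\ttheta^*}(\un{\bm p})(\widehat{\un{\bm p}}_n - \un{\bm p}) + o(\|\widehat{\un{\bm p}}_n - \un{\bm p}\|)$, and since $r_n \to \infty$ with $r_n(\widehat{\un{\bm p}}_n - \un{\bm p}) \toL \bm Z$ force $\widehat{\un{\bm p}}_n \overset{\text{P}}{\to}\un{\bm p}$ and $r_n(\widehat{\un{\bm p}}_n - \un{\bm p}) = O_{\text{P}}(1)$, the scaled remainder is $o_{\text{P}}(1)$ and Slutsky's lemma gives $r_n(\widehat{\ttheta}_n - \ttheta_\pp) \toL J_{\ttheta^*}(\un{\bm p})\bm Z$. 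The preliminary fact is exactly what licenses this step: $\widehat{\un{\bm p}}_n$ never leaves the set where $\ttheta^*$ is $C^1$.

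For part~(c), I would begin from the classical multinomial CLT $\sqrt n(\vect(\pp_n) - \un{\bm p}) \toL \Ns(\bm 0, \diag(\un{\bm p}) - \un{\bm p}\transp{\un{\bm p}})$ for the raw empirical array, and write $\widehat{\un{\bm p}}_n - \un{\bm p} = \tfrac{n}{n+1}(\vect(\pp_n) - \un{\bm p}) + \tfrac{1}{n+1}(\vect(\qq) - \un{\bm p})$; then $\tfrac{n}{n+1}\to 1$, $\sqrt n\,\tfrac{1}{n+1}\to 0$ and Slutsky deliver \eqref{eqn:empprop} for the smoothed array. Part~(b) with $r_n = \sqrt n$ then gives asymptotic normality with covariance $\bm\Sigma_\ttheta = J_{\ttheta^*}(\un{\bm p})\bigl(\diag(\un{\bm p}) - \un{\bm p}\transp{\un{\bm p}}\bigr)\transp{J_{\ttheta^*}(\un{\bm p})}$, and what remains is the algebraic collapse to \eqref{eqn:Sigma}. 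Writing $J_{\ttheta^*}(\un{\bm p}) = M^{-1}B$ with the symmetric $M = \transp{\widetilde{A}_{\Ss_\pp}}\diag^{-1}(R_{\Ss_\pp}\un{\ggamma}_\pp)\widetilde{A}_{\Ss_\pp}$ and $B = \transp{\widetilde{A}_{\Ss_\pp}}\diag^{-1}(R_{\Ss_\pp}\un{\bm p})R_{\Ss_\pp}$, so that $\bm\Sigma_\ttheta = M^{-1}B(\diag(\un{\bm p}) - \un{\bm p}\transp{\un{\bm p}})\transp{B}M^{-1}$ by symmetry of $M$, I would then invoke two identities: $R_{\Ss_\pp}\diag(\un{\bm p})\transp{R_{\Ss_\pp}} = \diag(R_{\Ss_\pp}\un{\bm p})$, whence $B\diag(\un{\bm p})\transp{B} = \transp{\widetilde{A}_{\Ss_\pp}}\diag^{-1}(R_{\Ss_\pp}\un{\bm p})\widetilde{A}_{\Ss_\pp}$; and $B\un{\bm p} = \transp{\widetilde{A}_{\Ss_\pp}}\bm 1_{|\Ss_\pp|} = \bm 0$, because each column of $\widetilde{A}_{\Ss_\pp}$ vectorises (up to structural zeros) an array of $\bm\Delta^\circ(\Ss_\pp)$, whose null univariate margins force its entries to sum to zero, so the rank-one term disappears. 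Substituting gives $\bm\Sigma_\ttheta = M^{-1}\transp{\widetilde{A}_{\Ss_\pp}}\diag^{-1}(R_{\Ss_\pp}\un{\bm p})\widetilde{A}_{\Ss_\pp}M^{-1}$, which is exactly \eqref{eqn:Sigma}.

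I do not expect a genuine analytic obstacle here: Proposition~\ref{prop:diff} supplies the $C^1$-ness and the explicit Jacobian, after which part~(a) is the continuous mapping theorem and part~(b) is the delta method. The two points that need care are bookkeeping: (i) the legitimacy checks — that \eqref{eqn:hatpn} keeps $\widehat{\pp}_n$ inside $\Ps(\Ss_\pp)$, so $\ttheta^*$ applies and is $C^1$ at every argument encountered, and that the multinomial CLT plus the smoothing correction genuinely yield \eqref{eqn:empprop}; and (ii) the linear-algebra reduction of the naive delta-method covariance to the sandwich \eqref{eqn:Sigma}, which rests squarely on the two identities above — in particular on $\transp{\widetilde{A}_{\Ss_\pp}}\bm 1_{|\Ss_\pp|} = \bm 0$, i.e.\ on the fact that the basis arrays of $\bm\Delta^\circ(\Ss_\pp)$ have vanishing total mass.
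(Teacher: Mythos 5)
Your proposal is correct and follows essentially the same route as the paper's proof: continuity of $\ttheta^*$ plus the continuous mapping theorem for (a), the delta method via the stochastic expansion $\ttheta^*(\widehat{\un{\bm p}}_n) = \ttheta^*(\un{\bm p}) + J_{\ttheta^*}(\un{\bm p})(\widehat{\un{\bm p}}_n - \un{\bm p}) + o_P(\widehat{\un{\bm p}}_n - \un{\bm p})$ for (b), and the same two identities ($R_{\Ss_\pp}\diag(\un{\bm p})\transp{R_{\Ss_\pp}} = \diag(R_{\Ss_\pp}\un{\bm p})$ and $\transp{\widetilde{A}_{\Ss_\pp}}\uno_{|\Ss_\pp|} = \bm 0$) to collapse the delta-method covariance to the sandwich form \eqref{eqn:Sigma} in (c). Your additional bookkeeping (verifying that the smoothing keeps $\widehat{\pp}_n$ in $\Ps(\Ss_\pp)$, and deriving \eqref{eqn:empprop} from the raw multinomial CLT) is sound but not needed, since the paper treats \eqref{eqn:empprop} as a hypothesis of part~(c).
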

\begin{proof} See Appendix \ref{app:proofs}. \end{proof}

\noindent We may recognise in (\ref{eqn:Sigma}) some sort of `sandwich' covariance matrix expression for a parametric estimator in a misspecified model \citep{Huber67, White82,Kauermann01}. We are now in position to state our main theorem. Recall that $\widehat{\un{\bm p}}_n \doteq \vect(\widehat{\pp}_n)$ and $\widehat{\un{\ggamma}}_n \doteq \vect(\widehat{\ggamma}_n)$, where $\widehat{\pp}_n$ and $\widehat{\ggamma}_n$ are defined in (\ref{eqn:hatpn}) and (\ref{eqn:empcoparray}). Write 
\[\widehat{\un{\ggamma}}_n =  \un{\ggamma}(\widehat{\ttheta}_n) =  \un{\ggamma}^{(q,\Ss_\pp)} + A_{\Ss_\pp} \widehat{\ttheta}_n \]
by plugging (\ref{eqn:hattheta}) into (\ref{eq:un:G:vartheta}). Then, it directly follows from Proposition \ref{prop:thetahat} that:

\begin{theorem} \label{thm:gammahat} Let $\{\XX_1,\dots,\XX_n\}$ be a sample drawn from $P$, where the support $\Ss_\pp$ of the probability array $\pp$ of $P$ is such that $\Gamma(\Ss_\pp)\neq \emptyset$ (Condition \ref{cond:Gamma:S}). 
	\begin{enumerate}[(a)] 
		\item If estimator (\ref{eqn:hatpn}) is a (strongly) consistent estimator of $\pp$; that is, $\widehat{\pp}_n  \overset{\text{P}}{\to} \pp$ ($\widehat{\pp}_n  \overset{\text{as}}{\to} \pp$) as $n \to \infty$, then the empirical copula array $\widehat{\ggamma}_n$  (\ref{eqn:hatgamma}) is a (strongly) consistent estimator of the copula array $\ggamma_\pp$ of $\pp$; that is,
		\begin{equation}  \widehat{\ggamma}_n \overset{\text{P}}{\to} \ggamma_\pp \quad (\widehat{\ggamma}_n \overset{\text{as}}{\to} \ggamma_\pp)\qquad \text{ as } n \to \infty. \label{eqn:consist} \end{equation}
	\end{enumerate}
	Assume that Condition \ref{cond:Gamma:S:infinite} holds for $\Ss_\pp$ and that a basis $\Delta_1, \dots, \Delta_{d_\circ}$ of $\bm \Delta^\circ(\Ss_\pp)$ (\ref{eq:DDelta}) has been identified and formed into the matrix $A_{\Ss_\pp}$ as described in Section \ref{subsec:param}. 
	\begin{enumerate}[(a)] \setcounter{enumi}{1}
		\item If $r_n\left(\un{\widehat{\bm p}}_n- \un{\bm p}\right) \toL \bm Z$ as $n \to \infty$, for some  $r_n \to \infty$ and a given $(\prod_{\ell = 1}^d r_\ell)$-dimensional random vector $\bm Z$, then:
			\begin{equation*}  r_n\left(\un{\widehat{\ggamma}}_n- \un{\ggamma}_\pp\right) \toL 	J_{\ggamma^*}(\un{\bm p}) \bm Z, \label{eqn:gammaasymptZ}\end{equation*}
		where $J_{\ggamma^*}(\un{\bm p})$ is the Jacobian matrix defined in (\ref{eq:Jacobian:gamma:*}) with $\Ss = \Ss_\pp$;
		\item In particular, if $\{\XX_1,\dots,\XX_n\}$ is an i.i.d.\ sample drawn from $P$ so that
		\begin{equation*} \sqrt{n}\,\left(\un{\widehat{\bm p}}_n- \un{\bm p}\right) \toL \Ns_{\prod_{\ell = 1}^d r_\ell}\left(\bm 0,\diag(\un{\bm p}) - \un{\bm p} \, \transp{\un{\bm p}}\right), \label{eqn:classicnorm}  \end{equation*}
		then
		\begin{equation*} \sqrt{n}\,\left(\widehat{\un{\ggamma}}_n - \un{\ggamma}_\pp \right) \toL \Ns_{\prod_{\ell = 1}^d r_\ell}\left(\bm 0, \bm\Sigma_\ggamma \right) \qquad \text{as } n \to \infty, \label{eqn:gammaasympt} \end{equation*}
		where $\bm\Sigma_\ggamma = A_{\Ss_\pp} \bm\Sigma_\ttheta \transp{A}_{\Ss_\pp} $ with $\bm \Sigma_\ttheta$ defined in (\ref{eqn:Sigma}); that is,
		\begin{equation} \bm\Sigma_\ggamma = A_{\Ss_\pp} \bm \left[\transp{\widetilde{A}_{\Ss_\pp}} \diag^{-1}\left( R_{\Ss_\pp} \un{\ggamma}_\pp\right) \widetilde{A}_{\Ss_\pp}\right]^{-1} \left[\transp{\widetilde{A}_{\Ss_\pp}} \diag^{-1}\left( R_{\Ss_\pp} \un{\bm p}\right) \widetilde{A}_{\Ss_\pp} \right]\left[\transp{\widetilde{A}_{\Ss_\pp}} \diag^{-1}\left( R_{\Ss_\pp} \un{\ggamma}_\pp\right) \widetilde{A}_{\Ss_\pp}\right]^{-1} \transp{A}_{\Ss_\pp}, \label{eqn:Sigmagamma} \end{equation}
        with the matrices $R_{\Ss_\pp}$ and $\widetilde{A}_{\Ss_\pp} = R_{\Ss_\pp}  A_{\Ss_\pp}$ as defined in Section \ref{subsec:param}.
	\end{enumerate}
	\qed
\end{theorem}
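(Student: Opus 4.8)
The plan is to derive all three assertions as push-forwards of Proposition \ref{prop:thetahat} through the affine reconstruction map $\ttheta \mapsto \un{\ggamma}(\ttheta) = \un{\ggamma}^{(q,\Ss_\pp)} + A_{\Ss_\pp}\ttheta$ of \eqref{eq:un:G:vartheta}. The single identity driving everything is the exact (not merely first-order) linear relation
\[ \widehat{\un{\ggamma}}_n - \un{\ggamma}_\pp \;=\; A_{\Ss_\pp}\bigl(\widehat{\ttheta}_n - \ttheta_\pp\bigr), \]
which holds because, by Lemma \ref{lem:bij} and \eqref{eqn:hatgamma}, $\widehat{\un{\ggamma}}_n = \un{\ggamma}^{(q,\Ss_\pp)} + A_{\Ss_\pp}\widehat{\ttheta}_n$ and likewise $\un{\ggamma}_\pp = \un{\ggamma}^{(q,\Ss_\pp)} + A_{\Ss_\pp}\ttheta_\pp$, so the common base point $\un{\ggamma}^{(q,\Ss_\pp)}$ cancels. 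Since $A_{\Ss_\pp}$ is a fixed (deterministic) matrix, stochastic convergence of $\widehat{\ttheta}_n$ transfers verbatim to $\widehat{\un{\ggamma}}_n$ by the continuous mapping theorem (for (a) and (b)) and by linearity of the multivariate normal family (for (c)).

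For part (a) I would split on $d_\circ = \dim(\bm\Delta^\circ(\Ss_\pp))$. If $d_\circ \geq 1$, Condition \ref{cond:Gamma:S:infinite} is in force, Proposition \ref{prop:thetahat}(a) gives $\widehat{\ttheta}_n \to \ttheta_\pp$ (in probability, resp.\ a.s.), and applying the continuous map $\ttheta \mapsto \ggamma(\ttheta)$ yields \eqref{eqn:consist}; equivalently, one may invoke continuity of $\un{\ggamma}^*$ on $\un{\Ps}(\Ss_\pp)$ from Proposition \ref{prop:diffgamma} together with the fact that $\widehat{\pp}_n \in \Ps(\Ss_\pp)$ for every $n$ by construction of \eqref{eqn:hatpn}. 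If $d_\circ = 0$, then $\Gamma(\Ss_\pp)$ is the singleton $\{\ggamma^{(q,\Ss_\pp)}\}$; since $\widehat{\pp}_n \in \Ps(\Ss_\pp)$ and $\Gamma(\Ss_\pp)\neq\emptyset$, Proposition \ref{prop:sklar:like}(ii) forces $\widehat{\ggamma}_n = \ggamma^{(q,\Ss_\pp)} = \ggamma_\pp$ deterministically, so consistency is immediate. This case analysis is exactly why part (a) is stated under Condition \ref{cond:Gamma:S} alone, whereas (b)--(c) assume Condition \ref{cond:Gamma:S:infinite} and may use the parameterisation throughout.

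For part (b), multiplying the displayed identity by $r_n$ gives $r_n(\widehat{\un{\ggamma}}_n - \un{\ggamma}_\pp) = A_{\Ss_\pp}\, r_n(\widehat{\ttheta}_n - \ttheta_\pp)$; Proposition \ref{prop:thetahat}(b) yields $r_n(\widehat{\ttheta}_n - \ttheta_\pp) \toL J_{\ttheta^*}(\un{\bm p})\bm Z$, and since left-multiplication by $A_{\Ss_\pp}$ is continuous, the continuous mapping theorem gives $r_n(\widehat{\un{\ggamma}}_n - \un{\ggamma}_\pp) \toL A_{\Ss_\pp} J_{\ttheta^*}(\un{\bm p})\bm Z = J_{\ggamma^*}(\un{\bm p})\bm Z$, the last equality being \eqref{eqn:Jgamma}--\eqref{eq:Jacobian:gamma:*} (Proposition \ref{prop:diffgamma}). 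For part (c), I would apply Proposition \ref{prop:thetahat}(c) --- whose input is the multinomial central limit theorem \eqref{eqn:empprop} for the smoothed array $\widehat{\pp}_n$, the $(n/(n+1),1/(n+1))$ mixing of \eqref{eqn:hatpn} leaving the $\sqrt n$-limit unchanged because $\sqrt n(\widehat{\pp}_n-\pp)=\tfrac{n}{n+1}\sqrt n(\pp_n-\pp)+O_{\mathrm P}(n^{-1/2})$ --- to obtain $\sqrt n(\widehat{\ttheta}_n-\ttheta_\pp)\toL \Ns_{d_\circ}(\bm 0,\bm\Sigma_\ttheta)$, then push this forward through $A_{\Ss_\pp}$ to get $\sqrt n(\widehat{\un{\ggamma}}_n - \un{\ggamma}_\pp)\toL \Ns_{\prod_{\ell=1}^d r_\ell}(\bm 0, A_{\Ss_\pp}\bm\Sigma_\ttheta\transp{A}_{\Ss_\pp})$; substituting the sandwich form \eqref{eqn:Sigma} for $\bm\Sigma_\ttheta$ produces exactly $\bm\Sigma_\ggamma$ in \eqref{eqn:Sigmagamma}.

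At this stage there is essentially no hard obstacle: the substantive work --- strong consistency of the minimum-$\Is$-divergence estimator under the knowingly misspecified copula model, $C^1$-differentiability of the $\Is$-projection $\un{\bm p}\mapsto\un{\ggamma}^*(\un{\bm p})$, and the delta-method computation of its Jacobian --- has already been done in Propositions \ref{prop:thetahat}, \ref{prop:diff} and \ref{prop:diffgamma} (building on \citealp{Geenens24-diff}). The only points requiring care are: the degenerate case $d_\circ=0$ in (a) flagged above; the observation that, by Lemma \ref{lem:bij}, the reconstruction is \emph{globally} affine, so that $\widehat{\un{\ggamma}}_n - \un{\ggamma}_\pp$ equals $A_{\Ss_\pp}(\widehat{\ttheta}_n-\ttheta_\pp)$ exactly, which is what makes the push-forward exact rather than asymptotic; and a sanity check that $\bm\Sigma_\ggamma = A_{\Ss_\pp}\bm\Sigma_\ttheta\transp{A}_{\Ss_\pp}$ is independent of the chosen basis $\Delta_1,\dots,\Delta_{d_\circ}$ (equivalently of $A_{\Ss_\pp}$), which follows from the change-of-basis computation in Remark \ref{rem:invbasis} applied to $J_{\ggamma^*}$.
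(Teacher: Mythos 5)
Your proposal is correct and follows essentially the same route as the paper: the theorem is presented there as a direct corollary of Proposition \ref{prop:thetahat} via the exact affine identity $\widehat{\un{\ggamma}}_n - \un{\ggamma}_\pp = A_{\Ss_\pp}(\widehat{\ttheta}_n - \ttheta_\pp)$, with the degenerate case $d_\circ=0$ in part (a) dispatched exactly as you do (the paper notes after the statement that $\Gamma(\Ss_\pp)$ is then the singleton $\{\ggamma^{(q,\Ss_\pp)}\}$, so $\widehat{\ggamma}_n=\ggamma_\pp$ for all $n$). Your additional observations --- that the push-forward is exact rather than first-order because the reconstruction map is globally affine, and that $\bm\Sigma_\ggamma$ is basis-invariant by Remark \ref{rem:invbasis} --- are consistent with, and slightly more explicit than, the paper's treatment.
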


\noindent Note that $(a)$ holds without Condition \ref{cond:Gamma:S:infinite} for $\Ss_\pp$; in fact, if Condition \ref{cond:Gamma:S} holds but $d_\circ = 0$, the claim is trivial: in such a case $\Gamma(\Ss_\pp)$ contains only $\ggamma^{(q,\Ss_\pp)}$, and thus $\widehat{\ggamma}_n = \ggamma^{(q,\Ss_\pp)} =\ggamma_\pp$ for all $n \geq 1$. Also, in the spirit of Remark \ref{rem:invbasis}, $\bm\Sigma_\ggamma$ is invariant under a change of basis for $\DDelta^\circ(\Ss_\pp)$.

\begin{remark} \label{rem:consistvar} If $\widehat{\pp}_n  \overset{\text{P}}{\to} \pp$ ($\widehat{\pp}_n  \overset{\text{as}}{\to} \pp$) as $n \to \infty$, then $\widehat{\ggamma}_n \overset{\text{P}}{\to} \ggamma_\pp$  ($\widehat{\ggamma}_n \overset{\text{as}}{\to} \ggamma_\pp$) by (\ref{eqn:consist}); and the covariance matrices $\bm \Sigma_\ttheta$ and $\bm \Sigma_\ggamma$ in (\ref{eqn:Sigma}) and (\ref{eqn:Sigmagamma}) may also be (strongly) consistently estimated by 
	\begin{align} \widehat{\bm \Sigma}_{\ttheta;n} & = \left[\transp{\widetilde{A}_{\Ss_\pp}} \diag^{-1}\left( R_{\Ss_\pp} \un{\widehat{\ggamma}}_n\right) \widetilde{A}_{\Ss_\pp}\right]^{-1} \left[\transp{\widetilde{A}_{\Ss_\pp}} \diag^{-1}\left( R_{\Ss_\pp} \widehat{\un{\bm p}}_n\right) \widetilde{A}_{\Ss_\pp} \right]\left[\transp{\widetilde{A}_{\Ss_\pp}} \diag^{-1}\left( R_{\Ss_\pp} \un{\widehat{\ggamma}}_n\right) \widetilde{A}_{\Ss_\pp}\right]^{-1}  \label{eqn:hatsigmatheta0}\\
		\widehat{\bm \Sigma}_{\ggamma;n} & = A_{\Ss_\pp} \left[\transp{\widetilde{A}_{\Ss_\pp}} \diag^{-1}\left( R_{\Ss_\pp} \un{\widehat{\ggamma}}_n\right) \widetilde{A}_{\Ss_\pp}\right]^{-1} \left[\transp{\widetilde{A}_{\Ss_\pp}} \diag^{-1}\left( R_{\Ss_\pp} \widehat{\un{\bm p}}_n\right) \widetilde{A}_{\Ss_\pp} \right]\left[\transp{\widetilde{A}_{\Ss_\pp}} \diag^{-1}\left( R_{\Ss_\pp} \un{\widehat{\ggamma}}_n\right) \widetilde{A}_{\Ss_\pp}\right]^{-1} \transp{A}_{\Ss_\pp}. \label{eqn:hatsigmatheta} \end{align}
\end{remark} 

\noindent In the case of a hyper-rectangular support for $\pp$ ($\Ss_\pp = \Rs$), the expression of $\bm \Sigma_\ggamma$ simplifies slightly (Remark \ref{rmk:rect}), due to $R_\Rs$ begin the identity matrix:

\begin{corollary} Let $\{\XX_1,\dots,\XX_n\}$ be a sample drawn from $P$, where the probability array $\pp$ of $P$ has hyper-rectangular support $\Rs$. Then, if $\widehat{\un{\bm p}}_n$ satisfies (\ref{eqn:empprop}), we have:
	\begin{equation*} \sqrt{n}\,\left(\widehat{\un{\ggamma}}_n - \un{\ggamma}_\pp \right) \toL \Ns_{\prod_{\ell = 1}^d r_\ell}\left(\bm 0,  \bm \Sigma_\ggamma \right) \qquad \text{as } n \to \infty, \end{equation*}
	where 
	\begin{equation*} \bm \Sigma_\ggamma = A_\Rs \left[\transp{{A}_\Rs} \diag^{-1}\left( \un{\ggamma}_\pp\right) {A}_\Rs\right]^{-1} \left[\transp{{A}_\Rs} \diag^{-1}\left( \un{\bm p}\right) {A}_\Rs \right]\left[{{A}_\Rs} \diag^{-1}\left( \un{\ggamma}_\pp\right) {A}_\Rs\right]^{-1} \transp{A}_\Rs.  \qed\label{eqn:Sigmagamiid} \end{equation*}
\end{corollary}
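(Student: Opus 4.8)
The plan is to read the statement off Theorem~\ref{thm:gammahat}$(c)$ applied with $\Ss_\pp = \Rs$, after checking that the two structural conditions required there hold automatically in the hyper-rectangular case. For Condition~\ref{cond:Gamma:S}, note that the independence copula array $\ggamma^{(\indep)} = [{\bm u}^{(1)} \bigcdot \dots \bigcdot {\bm u}^{(d)}]$, whose entries all equal $1/\prod_{\ell=1}^d r_\ell$, lies in $\Gamma$ and has support $\Rs$; hence $\Gamma(\Rs) \neq \emptyset$. For Condition~\ref{cond:Gamma:S:infinite}, Remark~\ref{rmk:rect} identifies $\DDelta^\circ(\Rs)$ as the linear subspace of arrays with all slice sums zero, of dimension $d_\circ = \prod_{\ell=1}^d (r_\ell - 1)$; provided every $r_\ell \geq 2$ (a coordinate with $r_\ell = 1$ being degenerate and removable from the outset), this is at least $1$, so Condition~\ref{cond:Gamma:S:infinite} holds and a basis $\Delta_1,\dots,\Delta_{d_\circ}$ of $\DDelta^\circ(\Rs)$ -- for instance the canonical one of Remark~\ref{rmk:rect} -- can be collected into the matrix $A_\Rs$.

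With both conditions in force, Theorem~\ref{thm:gammahat}$(c)$ applies verbatim at $\Ss_\pp = \Rs$: under~\eqref{eqn:empprop} it yields $\sqrt{n}(\widehat{\un{\ggamma}}_n - \un{\ggamma}_\pp) \toL \Ns_{\prod_{\ell=1}^d r_\ell}(\bm 0, \bm\Sigma_\ggamma)$, with $\bm\Sigma_\ggamma$ as in~\eqref{eqn:Sigmagamma} evaluated at $\Ss_\pp = \Rs$. It then only remains to simplify that covariance. By Remark~\ref{rmk:rect}, for $\Ss = \Rs$ the entry-selection matrix $R_\Rs$ is the $(\prod_{\ell=1}^d r_\ell) \times (\prod_{\ell=1}^d r_\ell)$ identity, so $\widetilde{A}_\Rs = R_\Rs A_\Rs = A_\Rs$, $R_\Rs \un{\ggamma}_\pp = \un{\ggamma}_\pp$ and $R_\Rs \un{\bm p} = \un{\bm p}$. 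Substituting these three equalities into~\eqref{eqn:Sigmagamma} removes every $R_{\Ss_\pp}$ factor and replaces each $\widetilde{A}_{\Ss_\pp}$ by $A_\Rs$, which is precisely the displayed expression for $\bm\Sigma_\ggamma$.

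The inner matrix $\transp{A}_\Rs \diag^{-1}(\un{\ggamma}_\pp) A_\Rs$ is invertible -- indeed $\Supp(\ggamma_\pp) = \Rs$ by Proposition~\ref{prop:sklar:like}$(ii)$, so $\un{\ggamma}_\pp$ has strictly positive entries, $\diag^{-1}(\un{\ggamma}_\pp)$ is positive definite, and $A_\Rs$ has full column rank $d_\circ$ -- so the expression is well defined; this is in any case already guaranteed by Propositions~\ref{prop:diff}--\ref{prop:diffgamma}, on which Theorem~\ref{thm:gammahat} rests. There is no genuine obstacle here: the corollary is essentially a substitution exercise once Theorem~\ref{thm:gammahat} and the structural description of $\DDelta^\circ(\Rs)$ in Remark~\ref{rmk:rect} are granted, the sole point needing a word of care being the identifications $R_\Rs = I$ and $\widetilde{A}_\Rs = A_\Rs$ in~\eqref{eqn:Sigmagamma} (with the understanding that the borderline case $d_\circ = 0$ reduces everything to the trivial identity $\widehat{\ggamma}_n = \ggamma_\pp = \ggamma^{(\indep)}$).
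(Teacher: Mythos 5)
Your proposal is correct and matches the paper's (implicit) argument exactly: the corollary is stated as an immediate consequence of Theorem \ref{thm:gammahat}$(c)$ together with the observations of Remark \ref{rmk:rect} that $R_\Rs = I$ and $\widetilde{A}_\Rs = A_\Rs$, which is precisely the substitution you carry out, with the verification of Conditions \ref{cond:Gamma:S}--\ref{cond:Gamma:S:infinite} being a sensible (if routine) addition. Incidentally, your simplification also makes clear that the last bracketed factor in the paper's displayed $\bm\Sigma_\ggamma$ should read $\left[\transp{A}_\Rs \diag^{-1}\left(\un{\ggamma}_\pp\right) A_\Rs\right]^{-1}$ (a transpose is missing in the printed statement).
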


\section{Applications} \label{sec:ill}

\subsection{Yule's coefficient as discrete Spearman's $\rho_\text{S}$, and its empirical estimator} \label{subsec:Yule}

In a bivariate vector $(X_1,X_2)$, concordance is the concept capturing the tendency of $X_1$ and $X_2$ to vary {in the same direction}: if one increases (resp.\ decreases), then the other one tends to increase (resp.\ decrease) as well. Discordance is the reverse effect; i.e., $X_1$ and $X_2$ tend to vary in opposite directions. As $(X_1,X_2)$ is concordant if and only if $(X_1,-X_2)$ is discordant, discordance is merely negative concordance. \cite{Scarsini84} formalised these ideas and showed that concordance reflects a partial ordering of the distributions in a given Fr\'echet class. As a result, {\it in continuous vectors}, concordance is intrinsically a property of the underlying (continuous) copula $C_{X_1X_2}$ of the vector $(X_1,X_2)$. Specifically, the vector $(X_1,X_2)$ shows more concordance than the vector $(X_1',X_2')$ if $C_{X_1X_2}(u,v) \geq C_{X_1'X_2'}(u,v)$ for all $(u,v) \in [0,1]$; i.e., $C_{X_1X_2}$ stochastically dominates $C_{X'_1X'_2}$ \citep[Definition 4]{Scarsini84}. 

\ppn Given the previous definition, in a continuous vector $(X_1,X_2)$ it must be the case that any valid index quantifying concordance is a parameter of the copula $C_{X_1X_2}$. One popular such concordance coefficient is Spearman's $\rho_\text{S}$ \citep[Section 5.1.2]{Nelsen06}, the (Pearson's) correlation coefficient associated to $C_{X_1X_2}$; that is, the correlation between $F_{X_1}(X_1)$ and $F_{X_2}(X_2)$ (where $F_{X_k}$ is the marginal cumulative distribution function of $X_k$, $k=1,2$). As $F_{X_k}(X_k) \sim \Us_{[0,1]}$ always (Probability Integral Transform, PIT), $\rho_\text{S}$ is margin-free and takes its extreme values $\pm 1$ in case of comonotonicity/countermonotonicity between $X_1$ and $X_2$; that is, when the copula $C_{X_1X_2}$ is one of the Fr\'echet bounds $M$ or $W$ \citep[p.\,11]{Nelsen06}, or yet equivalently when there exists a monotonic one-to-one relationship between $X_1$ ans $X_2$. Its empirical version is often referred to as `Spearman's rank correlation', as it is the empirical correlation coefficient between the ranks of the observations in their marginal samples. 

\ppn Now, \citet[Section 3]{Scarsini84} admitted that, for discrete vectors, the above ideas are problematic. Again, this difficulty arises because the classical copula $C_{X_1X_2}$ is ill-defined outside continuous distributions, with PIT not available and the concept of `rank' being ambiguous due to the evident possibility of ties among the observations. As a result, naive attempts at computing Spearman's $\rho_\text{S}$ on a bivariate discrete vector lead to well-documented inconsistencies \citep[Sections 4.2--4.4]{Genest07}. In particular, $\rho_\text{S}$ loses its `margin-freeness'  and generally fails to reach its bounds $\pm 1$. \cite{Scarsini84} concluded that the above definition of concordance must be replaced by a `{\it (necessarily) more complicated one}' (p.\,211) and proposed a variant based on the subcopula (\citealp[Definition 3]{Schweizer74}; \citealp{Geenens23short}), which he judged himself `{\it cumbersome}' and `{\it not easily testable in practice}' (p.\,213).

\ppn Now, equipped with the above notion of discrete copula, the basic definition applies seamlessly. In particular, inspired by \cite{Tchen80}, we define: 
\begin{definition} Let $(X_1,X_2)$ and $(X'_1, X'_2)$ be two bivariate discrete vectors with distributions $P$ and $P'$ and associated probability arrays $\pp,\pp' \in \R^{r_1 \times r_2}$ ($2 \leq r_1,r_2 < \infty$). Assume that $P$ and $P'$ admit respective copula arrays $\ggamma_\pp \doteq \left(\gamma_{i_1i_2}\right)_{i_1 \in [r_1],i_2\in [r_2]}$ and $\ggamma_{\pp'} \doteq \left(\gamma'_{i_1i_2}\right)_{i_1 \in [r_1],i_2\in [r_2]}$ and copula measures $G_\pp$ and $G_{\pp'}$. Then, we say that  $(X_1,X_2)$ show more concordance than $(X'_1, X'_2)$ if $G_\pp$ stochastically dominates $G_{\pp'}$; i.e.,
	\[ \forall (j_1,j_2) \in [r_1] \times [r_2],\quad \sum_{i_1=1}^{j_1} \sum_{i_2=1}^{j_2} \gamma_{i_1i_2} \geq \sum_{i_1=1}^{j_1} \sum_{i_2=1}^{j_2}  \gamma'_{i_1i_2}. \]
\end{definition}
\noindent Accordingly, any index quantifying the concordance within a discrete vector $(X_1,X_2)$ must be a parameter of its discrete copula. \cite{Geenens2020} proposed a discrete analogue to Spearman's $\rho_\text{S}$, called `Yule's coefficient $\Upsilon$' and defined as the (Pearson's) correlation coefficient associated to the {discrete copula measure} under consideration. Recall that $G_\pp$ is supported on $\{\frac{1}{r_1+1},\ldots,\frac{r_1}{r_1+1}\} \times \{\frac{1}{r_2+1},\ldots,\frac{r_2}{r_2+1}\}$. Then, for a bivariate discrete random vector admitting the copula array $\ggamma_\pp \doteq \left(\gamma_{i_1i_2}\right)_{i_1 \in [r_1],i_2\in [r_2]}$, Yule's coefficient is defined as 
\begin{equation} \Upsilon = \Upsilon (\ggamma_\pp) \doteq   \frac{12}{\sqrt{(r_1+1)(r_1-1)(r_2+1)(r_2-1)}}\sum_{i_1 \in [r_1]}\sum_{i_2 \in [r_2]} i_1 i_2 {\gamma}_{i_1i_2} - 3 \frac{\sqrt{(r_1+1)(r_2+1)}}{\sqrt{(r_1-1)(r_2-1)}}.  \label{eqn:Yule} \end{equation}
Call the vectors $(r_1) \doteq \transp{(1 \ 2 \ \ldots \ r_1)}$ and $(r_2) \doteq \transp{(1 \ 2 \ \ldots \ r_2)}$, and see that 
\begin{equation*} \sum_{i_1 \in [r_1]}\sum_{i_2 \in [r_2]} i_1 i_2 \gamma_{i_1i_2} =  \transp{(r_1)} \ggamma_\pp (r_2) =  \transp{[(r_2) \otimes (r_1)]} \un{\ggamma}_\pp, \end{equation*} 
where $(r_2) \otimes (r_1)$ is the Kronecker product between $(r_2)$ and $(r_1)$. Evidently, $\Upsilon$ is a linear functional of $\ggamma_\pp$ (or $\un{\ggamma}_\pp$), and it is straightforward to verify that it satisfies the (discrete analogues of the) characterising properties ($\mu 1$)-($\mu 6$) of valid concordance coefficients listed in \citet[Definition 2.4.7]{Durante15}. (The property ($\mu 7$) will actually follow from the developments below.) As noted in \cite{Geenens2020}, $\Upsilon = \pm 1$ if and only if $\ggamma_\pp$ is a diagonal matrix; that is, like in the continuous case, a monotonic one-to-one relationship between $X_1$ and $X_2$ -- which is clearly possible only if $r_1 = r_2$. When $r_1 \neq r_2$, some probability weight allocated to a given row of $\gamma_\pp$ must necessarily be shared between (at least) two columns, or vice-versa; hence no sense of perfect concordance (or discordance) can ever be found between $X_1$ and $X_2$ and $|\Upsilon|$ cannot be equal to 1. 
 
\ppn Now, consider the empirical framework of Section \ref{sec:empcop}, in which we have $n$ copies $\{\XX_1,\dots,\XX_n\}$ of $\XX = (X_1,X_2) \sim P$, and suppose that we are interested in estimating Yule's coefficient $\Upsilon$ from these data. From (\ref{eqn:hatpn}), (\ref{eqn:empcoparray}) and (\ref{eqn:Yule}), the natural `plug-in' estimator is
\begin{equation} \widehat{\Upsilon}_n = \Upsilon(\widehat{\ggamma}_n)  =  \frac{12}{\sqrt{(r_1+1)(r_1-1)(r_2+1)(r_2-1)}}\sum_{i_1 \in [r_1]}\sum_{i_2 \in [r_2]} i_1 i_2 \widehat{\gamma}_{n;i_1i_2} - 3 \frac{\sqrt{(r_1+1)(r_2+1)}}{\sqrt{(r_1-1)(r_2-1)}}\label{eqn:Upshat}
 \end{equation}  
which will be referred to as the {\it empirical Yule's coefficient}. Let $\kappa_{r_1,r_2} \doteq \frac{12}{\sqrt{(r_1-1)(r_1+1)(r_2-1)(r_2+1)}}$. Then 
\[\widehat{\Upsilon}_n - \Upsilon =  \kappa_{r_1,r_2} \transp{[(r_2) \otimes (r_1)]} (\widehat{\un{\ggamma}}_n - \ggamma_\pp), \]
and the asymptotic distribution of $\widehat{\Upsilon}_n$ follows as a straightforward consequence of Theorem \ref{thm:gammahat}. 

\begin{theorem} \label{thm:upsilonhat} Let $\{\XX_1,\dots,\XX_n\}$ be a sample drawn from $P$, where the support $\Ss_\pp$ of the probability array $\pp$ of $P$ is such that $\Gamma(\Ss_\pp)\neq \emptyset$ (Condition \ref{cond:Gamma:S}).  
	\begin{enumerate}[(a)] 
		\item If the estimator $\widehat{\pp}_n$ in (\ref{eqn:hatpn}) is a (strongly) consistent estimator of $\pp$; that is, $\widehat{\pp}_n  \overset{\text{P}}{\to} \pp$ ($\widehat{\pp}_n  \overset{\text{as}}{\to} \pp$) as $n \to \infty$, then the empirical Yule's coefficient $\widehat{\Upsilon}_n$  (\ref{eqn:Upshat}) is a (strongly) consistent estimator of Yule's coefficient $\Upsilon$; that is,
		\[ \widehat{\Upsilon}_n \overset{\text{P}}{\to} \Upsilon \quad (\widehat{\Upsilon}_n \overset{\text{as}}{\to} \Upsilon)\qquad \text{ as } n \to \infty.\]
	\end{enumerate}
	Assume Condition \ref{cond:Gamma:S:infinite} hold for $\Ss_\pp$ and a basis $\Delta_1, \dots, \Delta_{d_\circ}$ of $\bm \Delta^\circ(\Ss_\pp)$ in (\ref{eq:DDelta}) has been identified and formed into the matrix $A_{\Ss_\pp}$ as described in Section \ref{subsec:param}. 
	
	\begin{enumerate}[(a)] \setcounter{enumi}{1}
		\item If $r_n\left(\un{\widehat{\bm p}}_n- \un{\bm p}\right) \toL \bm Z$ as $n \to \infty$, for some  $r_n \to \infty$ and a given $(\prod_{\ell = 1}^d r_\ell)$-dimensional random vector $\bm Z$; then:
		\begin{equation*}  r_n\left({\widehat{\Upsilon}}_n- {\Upsilon}\right) \toL \kappa_{r_1,r_2} \transp{[(r_2) \otimes (r_1)]}	J_{\ggamma^*}(\un{\bm p}) \bm Z, \label{eqn:upsasymptZ}\end{equation*}
		where $J_{\ggamma^*}(\un{\bm p})$ is the Jacobian matrix defined in (\ref{eq:Jacobian:gamma:*}) with $\Ss = \Ss_\pp$;
		\item In particular, if $\{\XX_1,\dots,\XX_n\}$ is an i.i.d.\ sample drawn from $P$ so that
		\begin{equation*} \sqrt{n}\,\left(\un{\widehat{\bm p}}_n- \un{\bm p}\right) \toL \Ns_{\prod_{\ell = 1}^d r_\ell}\left(\bm 0,\diag(\un{\bm p}) - \un{\bm p} \, \transp{\un{\bm p}}\right),   \end{equation*}
		then
		\begin{equation} \sqrt{n}\,\left(\widehat{{\Upsilon}}_n - {\Upsilon} \right) \toL \Ns\left(\bm 0, \sigma^2_\Upsilon \right) \qquad \text{as } n \to \infty, \label{eqn:upsasympt} \end{equation}
		where 
		\begin{equation}  \sigma^2_\Upsilon = \kappa^2_{r_1,r_2} \transp{[(r_2) \otimes (r_1)]}\bm\Sigma_\ggamma  {[(r_2) \otimes (r_1)]} \label{eqn:varUps}\end{equation}
		 and $\bm \Sigma_\ggamma$ is defined in (\ref{eqn:Sigmagamma}).  \qed
	\end{enumerate}
\end{theorem}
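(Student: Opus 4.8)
The plan is to derive all three claims directly from Theorem~\ref{thm:gammahat}, exploiting the fact — already recorded in the display preceding the statement — that Yule's coefficient is an \emph{affine} functional of the copula array. Set $\bm v \doteq (r_2)\otimes(r_1) \in \R^{\prod_{\ell=1}^{d} r_\ell}$ (with $d = 2$). The identity $\sum_{i_1\in[r_1]}\sum_{i_2\in[r_2]} i_1 i_2\, \gamma_{i_1 i_2} = \transp{(r_1)}\ggamma_\pp (r_2) = \transp{\bm v}\,\un{\ggamma}_\pp$, inserted into~\eqref{eqn:Yule}, shows that $\Upsilon(\ggamma) = \kappa_{r_1,r_2}\transp{\bm v}\,\un{\ggamma} + c$ for an additive constant $c$ not depending on $\ggamma$, so that $\widehat{\Upsilon}_n - \Upsilon = \kappa_{r_1,r_2}\transp{\bm v}\,(\widehat{\un{\ggamma}}_n - \un{\ggamma}_\pp)$. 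Because $\un{\ggamma}\mapsto \kappa_{r_1,r_2}\transp{\bm v}\,\un{\ggamma}$ is continuous (indeed linear), each conclusion of Theorem~\ref{thm:upsilonhat} follows from the matching conclusion of Theorem~\ref{thm:gammahat} by the continuous mapping theorem.

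First, for~(a): under the stated (strong) consistency of $\widehat{\pp}_n$, Theorem~\ref{thm:gammahat}(a) gives $\widehat{\ggamma}_n \to \ggamma_\pp$ in probability (resp.\ almost surely), and composing with the continuous functional $\Upsilon(\cdot)$ of~\eqref{eqn:Yule} yields $\widehat{\Upsilon}_n \to \Upsilon$ in the same mode; as in Theorem~\ref{thm:gammahat} this uses only Condition~\ref{cond:Gamma:S}, the subcase $d_\circ = 0$ being trivial since then $\widehat{\ggamma}_n \equiv \ggamma^{(q,\Ss_\pp)} = \ggamma_\pp$ for every $n$. Next, for~(b): from Theorem~\ref{thm:gammahat}(b), $r_n(\widehat{\un{\ggamma}}_n - \un{\ggamma}_\pp) \toL J_{\ggamma^*}(\un{\bm p})\bm Z$; pre-multiplying this weakly convergent sequence by the fixed row vector $\kappa_{r_1,r_2}\transp{\bm v}$ gives $r_n(\widehat{\Upsilon}_n - \Upsilon) \toL \kappa_{r_1,r_2}\transp{\bm v}\, J_{\ggamma^*}(\un{\bm p})\bm Z$, the asserted limit. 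Finally, for~(c): Theorem~\ref{thm:gammahat}(c) gives $\sqrt{n}(\widehat{\un{\ggamma}}_n - \un{\ggamma}_\pp) \toL \Ns_{\prod_{\ell=1}^d r_\ell}(\bm 0, \bm\Sigma_\ggamma)$, and a linear image of a (possibly degenerate) Gaussian vector is Gaussian, so $\sqrt{n}(\widehat{\Upsilon}_n - \Upsilon) \toL \Ns\bigl(0,\ \kappa^2_{r_1,r_2}\transp{\bm v}\,\bm\Sigma_\ggamma\,\bm v\bigr) = \Ns(0,\sigma^2_\Upsilon)$ with $\sigma^2_\Upsilon$ as in~\eqref{eqn:varUps}. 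Basis-invariance of $\sigma^2_\Upsilon$ is inherited from that of $\bm\Sigma_\ggamma$ recorded in Remark~\ref{rem:invbasis}.

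I do not expect a genuine obstacle: since the target functional is literally affine, this is a delta-method argument with no Taylor remainder to control and no smoothness hypothesis on $\Upsilon$ beyond linearity. The only step warranting an explicit line is the vectorisation identity $\transp{(r_1)}\ggamma_\pp (r_2) = \transp{[(r_2)\otimes(r_1)]}\un{\ggamma}_\pp$, which is the standard rule $\vect(ABC) = (\transp{C}\otimes A)\vect(B)$ applied with $A = \transp{(r_1)}$, $B = \ggamma_\pp$, $C = (r_2)$, taken consistently with the column-major convention fixed by $\phi$ in~\eqref{eq:phi} (for $d = 2$, $\phi(i_1,i_2) = i_1 + (i_2-1)r_1$, and $(r_2)\otimes(r_1)$ lists the entries $i_1 i_2$ in exactly that order). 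Everything else is bookkeeping.
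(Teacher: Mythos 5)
Your proposal is correct and follows exactly the route the paper intends: the paper gives no separate proof of Theorem~\ref{thm:upsilonhat}, instead deriving the identity $\widehat{\Upsilon}_n - \Upsilon = \kappa_{r_1,r_2}\transp{[(r_2)\otimes(r_1)]}(\widehat{\un{\ggamma}}_n - \un{\ggamma}_\pp)$ immediately before the statement and declaring the result a straightforward consequence of Theorem~\ref{thm:gammahat}, which is precisely your affine-functional/continuous-mapping argument. Your extra care with the column-major vectorisation identity and the degenerate case $d_\circ = 0$ in part (a) is sound and only makes explicit what the paper leaves implicit.
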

\noindent From Remark \ref{rem:consistvar}, the asymptotic variance of $\widehat{\Upsilon}_n$ can be (strongly) consistently estimated by 
\begin{equation} \widehat{\sigma}^2_{\Upsilon;n} = \kappa^2_{r_1,r_2} \transp{[(r_2) \otimes (r_1)]}   \widehat{\bm \Sigma}_{\ggamma;n} {[(r_2) \otimes (r_1)]}, \label{eqn:varUpsest} \end{equation}
where $\widehat{\bm \Sigma}_{\ggamma;n}$ is given in (\ref{eqn:hatsigmatheta}). 

\ppn As a particular case, consider $r_1 = r_2 = 2$ and a probability array $\pp= (p_{i_1i_2})_{i_1=1,2;i_2=1,2}$ of rectangular support $\Ss_\pp = \Rs = [2] \times [2]$. In this case, the coefficient $\Upsilon$ in (\ref{eqn:Yule}) reduces down to {\it Yule's colligation coefficient} \citep[p.\,592]{Yule12}, often called `Yule's $Y$' in the subsequent literature; viz. 
\[\Upsilon = \frac{\sqrt{p_{11}p_{22}}-\sqrt{p_{21}p_{12}}}{\sqrt{p_{11}p_{22}}+\sqrt{p_{21}p_{12}}} = \frac{\sqrt{\omega}-1}{\sqrt{\omega}+1},\]
where $\omega = \frac{p_{11}p_{22}}{p_{21}p_{12}}$ is the usual odds ratio. The associated copula array $\ggamma_\pp$ takes a simple form linear in $\Upsilon$  \citep[Section 5.5]{Geenens2020}:
\begin{equation} \ggamma_\pp = \begin{pmatrix} \frac{1+\Upsilon}{4} & \frac{1-\Upsilon}{4} \\ \frac{1-\Upsilon}{4} & \frac{1+\Upsilon}{4} \end{pmatrix}. \label{eqn:gammap2} \end{equation}
 Here,
\[ \DDelta^\circ(\Rs) = \left\{\begin{pmatrix} c & -c \\ -c & c \end{pmatrix}: c \in \R \right\}, \]
a linear subset of dimension $d_\circ = 1$ of the set of real $(2 \times 2)$-matrices, for which an obvious (vectorised) basis element is $\un{\Delta}_1 = \transp{(1; \ -1;\ -1; \ 1)} =  A_\Rs$ -- see that, with this basis in (\ref{eq:gamma:decomp}), $\theta_1 = \frac{\Upsilon}{4}$. From (\ref{eqn:gammap2}), we get 
\[\left[\transp{{A}_\Rs} \diag^{-1}\left( \un{\ggamma}_\pp\right) {A}_\Rs\right]^{-1} = \left[\sum_{i_1=1}^2 \sum_{i_2=1}^2 \frac{1}{\gamma_{i_1i_2}}\right]^{-1} = \frac{1-\Upsilon^2}{16}\]
and likewise
\[\transp{{A}_\Rs} \diag^{-1}\left( \un{\bm p}\right) {A}_\Rs  = \sum_{i_1=1}^2 \sum_{i_2=1}^2 \frac{1}{p_{i_1i_2}}. \]
As also
\[\transp{[(r_2) \otimes (r_1)]} A_\Rs = (1;2;2;4) \transp{(1;-1;-1,1)} = 1, \qquad \text{and } \qquad \kappa^2_{2,2} = \frac{144}{9} = 16,\]
Theorem \ref{thm:upsilonhat} shows that, from an i.i.d.\ sample, the empirical Yule's colligation coefficient; viz.
\[\widehat{\Upsilon}_n = \frac{\sqrt{\widehat{p}_{11;n}\widehat{p}_{22;n}}-\sqrt{\widehat{p}_{21;n}\widehat{p}_{12;n}}}{\sqrt{\widehat{p}_{11;n}\widehat{p}_{22;n}}+\sqrt{\widehat{p}_{21;n}\widehat{p}_{12;n}}}; \]
is such that 
\begin{equation*} \sqrt{n}\, (\widehat{\Upsilon}_n - \Upsilon) \toL \Ns(0,\sigma^2_\Upsilon ) \qquad \text{as } n\to\infty, \label{eqn:asymptnormUps} \end{equation*}
where, from (\ref{eqn:varUps}):
\[\sigma^2_\Upsilon = \frac{(1-\Upsilon^2)^2}{16} \sum_{i_1=1}^2 \sum_{i_2=1}^2 \frac{1}{p_{i_1i_2}}. \]
This confirms the results in \citet[Section 11.2.2]{Bishop75}.

\begin{example} Table \ref{tab:happiness} displays a cross‐classification of $n=2,955$ respondents from the 2006 US General Social Survey reproduced from \citet[Table 2.3]{Agresti12}. Columns correspond to self‐reported happiness levels (`Very happy', `Pretty happy', `Not too happy'), and rows to self‐assessed family income relative to the national average (`Above average', `Average', `Below average'). In an attempt to answer the question whether or not money can buy happiness, we may want to highlight some concordance (i.e., positive association) between happiness levels and family income.
	
\begin{table}[ht]
		\centering
		\begin{tabular}{lccc c}
			\toprule
			& \multicolumn{3}{c}{{\bf Happiness }}   &      \\
			\cmidrule(lr){2-4}
			{\bf Family Income}    & Very happy & Pretty happy  & Not too happy & Total \\
			\midrule
			Above average         & 272           & 294     & 49           & 615   \\
			  Average    & 454           & 835     & 131           & 1420  \\
			  Below average    & 185            & 527     & 208           & 920   \\
			\midrule
			Total  & 911           & 1656    & 388           & 2955  \\
			\bottomrule
	\end{tabular}
	\caption{Happiness versus Family Income}
	\label{tab:happiness}
\end{table}
	 
\ppn For these data, the estimators (\ref{eqn:hatpn}) and (\ref{eqn:empcoparray}) return 
\begin{equation} \widehat{\pp}_n = \begin{pmatrix} 0.0921 & 0.0995 & 0.0166 \\ 
	 0.1536 & 0.2825 & 0.0444 \\
	 0.0626 & 0.1783 & 0.0704
\end{pmatrix} \quad \text{ and } \quad \widehat{\ggamma}_n = \begin{pmatrix} 0.1574 & 0.1024 & 0.0735 \\ 
0.1167 & 0.1293 & 0.0873 \\
0.0592 & 0.1016 & 0.1725 \end{pmatrix},  \label{eqn:empex} \end{equation}  
the (slightly smoothed) empirical probability array and the corresponding empirical copula array (obtained from $\widehat{\pp}_n$ by IPF; see Remark \ref{rmk:IPF}). These two empirical (probability and copula) arrays are displayed in Figure \ref{fig:Ex51} under the form of `bubble plots'. On the left-hand side (original distribution), the nature of the association (if any) between happiness and income is far from obvious, as the highly unbalanced marginals precludes any clear visual assessment. In contrast, on the copula version (right-hand side), a sense of concordance appears clearly, with a majority of the probability weight stretching along the main diagonal.

\begin{figure}
	\centering
	\includegraphics[width=0.7\textwidth]{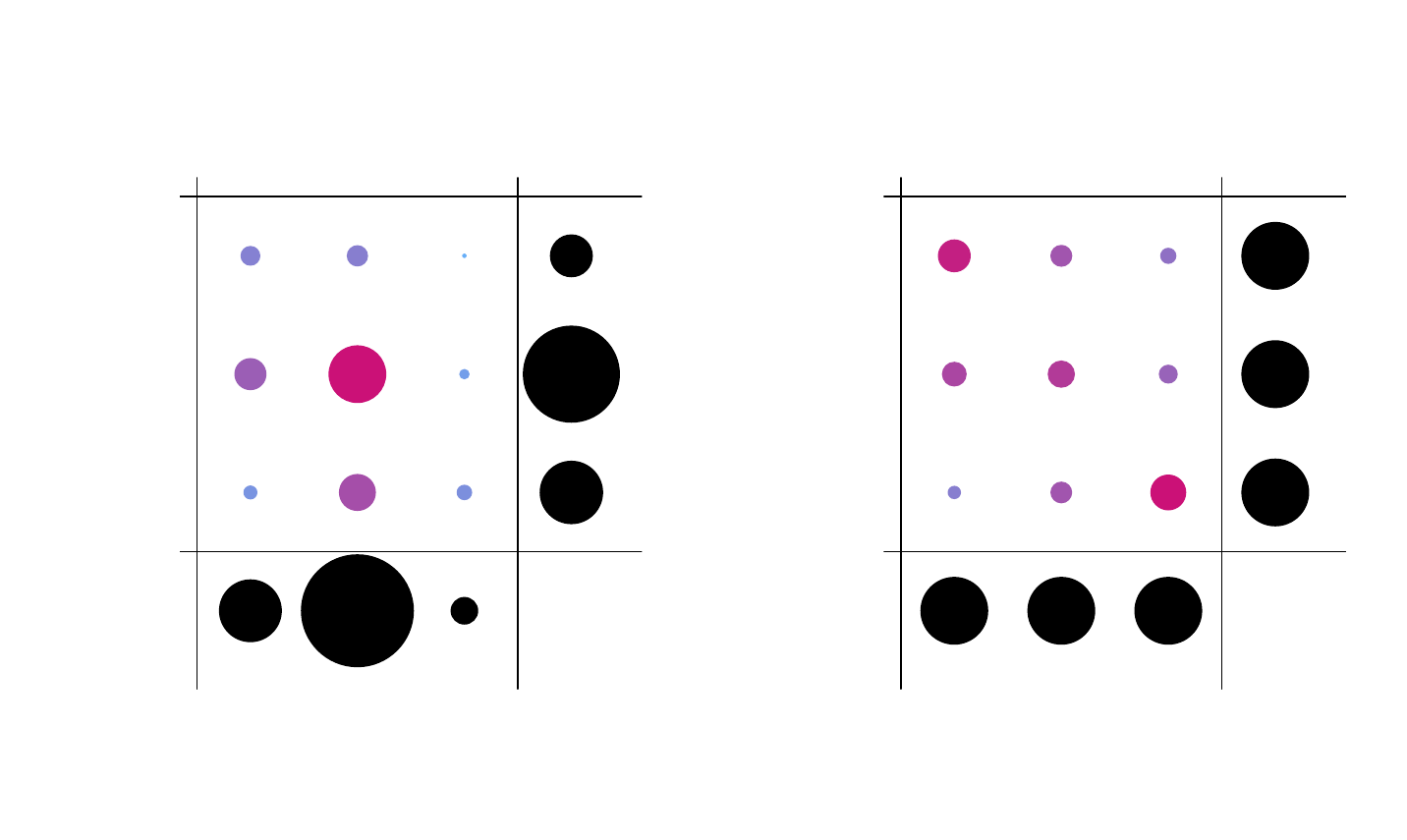}
	\caption{Bubble plots for the original empirical probability array $\widehat{\pp}_n$ (left) and empirical copula array $ \widehat{\ggamma}_n$ (right) for the data shown in Table \ref{tab:happiness}.} 
	\label{fig:Ex51}
\end{figure} 

\ppn  The empirical Yule's coefficient is directly computed as in (\ref{eqn:Upshat}) from $\widehat{\ggamma}_n$ in (\ref{eqn:empex}), and found to be $\widehat{\Upsilon}_n = 0.2956$ -- suggesting concordance between happiness levels and family income, indeed. For pursuing further inference on $\Upsilon$, we will assume that each respondent was drawn at random from a target population and answered independently of the others -- so that we can use Theorem \ref{thm:upsilonhat}$(c)$. As there is no null cell in Table \ref{tab:happiness}, it is clearly the case that the underlying probability distribution has rectangular support. Hence we can use the `canonical' basis (\ref{eqn:canon33}) in (\ref{eqn:hatsigmatheta}) and (\ref{eqn:varUpsest}) (and $R_{\Ss_\pp} = I$, the $(3 \times 3)$-identity matrix). With $\kappa_{3,3} = 1.5$, we can directly compute from (\ref{eqn:varUpsest}): $\widehat{\sigma}^2_{\Upsilon;n}  = 2.1121$. This means that the standard error of $\widehat{\Upsilon}_n$ is here estimated to $\frac{\widehat{\sigma}_{\Upsilon;n}}{\sqrt{n}}=0.0267$, which enables us to build the usual confidence interval of (approximated) level 95\% for $\Upsilon$ based on (\ref{eqn:upsasympt}); and we get: $[0.2432,\, 0.3480]$. The observed positive association is, therefore, highly significant but of low magnitude.

\ppn Incidentally, this confirms the fact that the specific values $x^{(1)}_1,...,x^{(1)}_{r_1}$ and $x^{(2)}_1,...,x^{(2)}_{r_2}$ forming the initial marginal domains $\Xs_1$ and $\Xs_2$ of $X_1$ and $X_2$ (Section \ref{sec:bivdiscrcop}) do not play any role in the discrete copula machinery. Therefore, these values need not be known or even have any physical reality, as long as $\Xs_1$ and $\Xs_2$ may be understood as two {\it ordered sets} -- which is the case in Table \ref{tab:happiness}. In particular, the estimation of $\Upsilon$ and the corresponding inference were obtained without the need to assign arbitrary scores to the categories of Table \ref{tab:happiness} -- as it would be necessary in a classical analysis of such tables (\citealp[Section 2.1]{Agresti12}; \citealp[Section 2.3.1]{Kateri14}). This is obviously a substantial benefit of the discrete copula approach, as it was already observed by \citet[p.\,740]{Goodman54} that assigning such scores was `{\it infrequently appropriate}'.
\end{example}

\subsection{Testing for quasi-independence} \label{subsec:quasitest}

Quasi-independence refers to random variables that are {\it independent within their joint support} \citep{Goodman68}. For a $d$-dimensional discrete random vector $\mathbf{X} = (X_1,\ldots,X_d)$ with distribution $P$ and probability array $\mathbf{p}$ on support $\mathcal{S}_\mathbf{p}$, quasi-independence translates into:
\[\pp_\ii  = \left\{\prod_{\ell =1}^d \beta^{(\ell)}_{i_\ell}\right\} \indic{\bm i \in \Ss_\pp},\qquad \forall \ii = (i_1,\ldots,i_d) \in \Rs,\]
for $\{\beta^{(\ell)}_{i_\ell}\}_{\ell \in [d], i_\ell \in [r_\ell]}$ a set of positive constants. When $\mathcal{S}_\mathbf{p} = \mathcal{R}$ (i.e., when the support is hyper-rectangular), this definition is equivalent to the joint independence of the variables $(X_1,\ldots,X_d)$. For a non-hyper-rectangular support, the essence of quasi-independence is that the random variables behave as if they were independent {\it except} for the constraint that certain combinations of values cannot occur together (i.e., the structural zeros of $\pp$) -- see further discussion in \citet[Sections 4 and 6]{Geenens22}. If $\Gamma(\Ss_\pp) \neq \emptyset$ (Condition \ref{cond:Gamma:S}), the copula array of a distribution showing quasi-independence on $\Ss_\pp$ is naturally the quasi-independence copula array $\ggamma^{(q,\Ss_\pp)}$ in Section \ref{sec:aff}.

\ppn Assume that $\Ss_\pp$ satisfies Condition \ref{cond:Gamma:S:infinite}, and that a basis for $\un{\DDelta}^\circ(\Ss_\pp)$ in (\ref{eq:DDelta:vect}) has been identified and formed into a matrix $A_{\Ss_\pp}$ (Section \ref{subsec:param}). Then it appears clearly from Lemma \ref{lem:bij} that $\pp$ shows quasi-independence if and only if $\ttheta = \zero$ when its copula array $\ggamma_\pp$ is expressed as in (\ref{eq:gamma:decomp}). Thus, testing for quasi-independence within $\pp$ amounts to testing for the null hypothesis $H_0 : \ttheta = \zero$ based on a sample $\{\XX_1,\ldots,\XX_n\}$ drawn from $P$. 

\ppn As a particular case of a familiar parametric hypothesis test, the procedure is relatively straightforward. From the empirical probability array $\widehat{\pp}_n$ in (\ref{eqn:hatpn}), we can compute $\widehat{\ttheta}_n$ as in (\ref{eqn:hattheta}) where $\ggamma(\ttheta) = \ggamma^{(q,\Ss_\pp)} + \vect^{-1}(A_{\Ss_\pp}\ttheta)$. (Alternatively, from the empirical copula array $\widehat{\ggamma}_n$ of the sample, we may directly identify $\widehat{\ttheta}_n$ by solving $A_{\Ss_\pp}\widehat{\ttheta}_n = \un{\widehat{\ggamma}}_n- \un{\ggamma}^{(q,\Ss_\pp)}$.) Assume that $\{\XX_1,\ldots,\XX_n\}$ is an i.i.d.\ sample. Then, Proposition \ref{prop:thetahat}$(c)$ establishes that, under $H_0$, $\sqrt{n} \, \widehat{\ttheta}_n \toL \Ns_{d_\circ}(\zero,\bm \Sigma_\ttheta)$ as $n \to \infty$, and therefore, noting that $\bm\Sigma_\ttheta$ is of (full) rank $d_\circ$, it is the case that
\[n \, \transp{\widehat{\ttheta}}_n \bm \Sigma^{-1}_\ttheta \widehat{\ttheta}_n \toL \chi^2_{d_\circ}.\]
Estimating $\bm\Sigma_\ttheta$ by $\widehat{\bm\Sigma}_{\ttheta;n}$ (\ref{eqn:hatsigmatheta0}), we may compute $t = n \, \transp{\widehat{\ttheta}}_n \widehat{\bm\Sigma}^{-1}_{\ttheta;n} \widehat{\ttheta}_n$ for the observed data, which yields an (approximated) $p$-value $p = \P(W > t)$, $W \sim \chi^2_{d_\circ}$. The decision follows.

\begin{example} Consider Table \ref{tab:teens} which presents data collected by \cite{Brunswick71} on $n=291$ teenagers' health concerns, gathered by asking: `{\it What are some of things you'd like to talk over with a doctor, if you had the chance to ask him anything you wanted?}' The data is cross-classified by age (12-15 years old versus 16-17 years old), gender, and four categories of health concerns (or absence thereof). No teenage boys reported concerns about menstrual problems, and since this can hardly be attributed to random sampling variation, it seems appropriate to regard these empty cells as structural zeros in the underlying probability array $\pp$ of interest. As the support $\Ss_\pp$ is consequently not hyper-rectangular, the variables (gender, age, health concerns) cannot be jointly independent. However, we may wonder: the incompatibility between teenage boys and menstrual problems left aside, do the data reveal any other association between the three variables? If not, this would amount to their quasi-independence.

\begin{table}[htbp]
	\centering
	
	\begin{tabular}{llccc}
		\toprule
		\textbf{Gender} & \textbf{Health concern} & \textbf{12--15 yo} & \textbf{16--17 yo} & \textbf{Total} \\
		\midrule
		\multirow{4}{*}{Males} 
		& Sex, reproduction    & 4   & 2   & 6   \\
		& Menstrual problems   & --  & --  & --  \\
		& How healthy am I     & 42  & 7   & 49  \\
		& Nothing              & 57  & 20  & 77  \\
		& \textbf{Total}       & 103 & 29  & 132 \\
		\midrule
		\multirow{4}{*}{Females} 
		& Sex, reproduction    & 9   & 7   & 16  \\
		& Menstrual problems   & 4   & 8   & 12  \\
		& How healthy am I     & 19  & 10  & 29  \\
		& Nothing              & 71  & 31  & 102 \\
		& \textbf{Total}       & 103 & 56  & 159 \\
		\bottomrule
	\end{tabular}
	\vspace{1ex}
	\caption{Teenagers’ concern with health problems based on data from \cite{Brunswick71}.}
	\label{tab:teens}
\end{table}

\noindent From these data, the (slightly smoothed) empirical probability array (\ref{eqn:hatpn}) and empirical copula array (\ref{eqn:empcoparray}) are found to be
\begin{equation}  \widehat{\pp}_n =
	\left\{
	\begin{bmatrix}
		0.0139 & 0.0071 \\
		0 & 0 \\
		0.1441 & 0.0242 \\
		0.1955 & 0.0687
	\end{bmatrix}, 
	\begin{bmatrix}
		0.0311 & 0.0242 \\
		0.0139 & 0.0276 \\
		0.0653 & 0.0345 \\
		0.2434 & 0.1064
	\end{bmatrix}
	\right\}, \quad  \widehat{\ggamma}_n =
	\left\{
	\begin{bmatrix}
		0.0662 & 0.0617 \\
		0 & 0 \\
		0.1549 & 0.0473 \\
		0.1038 & 0.0660
	\end{bmatrix}, 
	\begin{bmatrix}
		0.0507 & 0.0714 \\
		0.0550 & 0.1950 \\
		0.0244 & 0.0233 \\
		0.0448 & 0.0354
	\end{bmatrix}
	\right\},
	 \label{eqn:teenhat}
\end{equation}
where $\widehat{\ggamma}_n$ is the $\Is$-projection of $\widehat{\pp}_n$ onto the copula Fr\'echet class (\ref{eqn:Gamma422}), readily obtained via a trivariate version of the IPF procedure (Remark \ref{rmk:IPF}). Recognising that the support $\Ss_\pp$ of $\pp$ is the one studied in Example \ref{ex:example2}, we know that $d_\circ = 8$. Working with the basis matrix $A_{\Ss_\pp}$ shown in Appendix \ref{app:ex2}, we can identify
\[\widehat{\ttheta}_n = \transp{(0.0217; -0.1164; -0.0234;  0.0126;  0.0556;  0.0376; -0.0115;  0.0162)}, \]
and with $\widehat{\bm\Sigma}_{\ttheta;n}$ as in (\ref{eqn:hatsigmatheta0}), we may compute $ t = 31.49$. This yields a $p$-value of $p = \P( W > 31.49) = 0.0001$ for $W \sim \chi^2_8$; hence a clear rejection of the hypothesis of quasi-independence at all reasonable levels. This is consistent with the findings of \citet[Table 8.4]{Fienberg07} for the same data. Comparing $\widehat{\ggamma}_n$ in (\ref{eqn:teenhat}) to the quasi-independence copula (\ref{eqn:coparrayqSp}), we can easily identify the source of this rejection.    \qed

\end{example}

\section{Conclusion} \label{sec:ccl}

The discrete copula construction proposed in \cite{Geenens2020} opened new perspectives for applying copula-like methods to discrete distributions. The present paper provides full theoretical justification  for that construction through Proposition \ref{prop:sklar:like}, which may be regarded as a discrete analogue of Sklar's theorem. Articulated around the concept of $\Is$-projection \citep{Csiszar75} on Fr\'echet classes of distributions, it captures the very essence of the copula approach by establishing a clear separation between marginals and dependence -- unlike the classical Sklar's formalism when applied to discrete distributions \citep{Geenens23short}.

\ppn Building on this foundation,  the paper develops a complete inferential framework for empirical estimation of the discrete copula in arbitrary dimension and for any finite support, including non-hyper-rectangular cases. This extends the bivariate results of \cite{Kojadinovic24}, which were limited to rectangular supports.  The empirical discrete copula estimator is defined as the $\Is$-projection of the empirical probability array (i.e., the array of empirical proportions observed for each combination of the variables) onto the relevant uniform-margins Fr\'echet class. This projection is straightforward to compute in practice via the Iterative Proportional Fitting procedure (IPF, also known as Sinkhorn's algorithm).  Our main theoretical result, Theorem \ref{thm:gammahat}, can be viewed as a discrete analogue of the weak convergence of the empirical (continuous) copula process \citep{Fermanian04,Segers12}, and is expected to underpin all subsequent large-sample inference in the discrete copula setting. It establishes that the empirical discrete copula array is $\sqrt{n}$-consistent and asymptotically normal under the standard assumption of random sampling -- though our results are general enough to accommodate more complex settings, including serial dependence in the observations. An explicit `sandwich' form is provided for the estimator's asymptotic covariance, making it particularly amenable to estimation.

\ppn The practical reach of our theory has been illustrated in two examples. First, we derived the asymptotic distribution of Yule's coefficient of concordance, the natural discrete analogue of Spearman's $\rho_\text{S}$, which allows quantification and inference for the level of concordance in a bivariate discrete vector.  Second, we formulated a $\chi^2$‐test for quasi‐independence in a multivariate discrete distribution. Both procedures exploit only the empirical copula array, and hence are margin‐free.

\ppn Beyond dependence modelling, our results may also inform a variety of other areas. For example, given the perfect analogy between the definition (\ref{eqn:defcopKL}) of the discrete copula and the definition of the entropy-regularised optimal transport plan between discrete distributions \citep[equation (4.7)]{Peyre19}, Theorem \ref{thm:gammahat} may be relevant to empirical optimal transport problems -- and more generally any methodology that relies on minimum‐divergence fitting of discrete distributions. Ongoing and future work includes the case of countably infinite supports, which presents its particular challenges, and inference for discrete parametric copula models in higher dimensions, extending bivariate ideas exposed in \citet[Section 3.2]{Kojadinovic24}. We believe that the theory and tools presented here will become foundational for robust, flexible dependence modelling for discrete data across many applied fields.

\bibliographystyle{elsarticle-harv-cond-mod}
\setlength{\bibsep}{0cm}
\def\bibfont{\footnotesize} 
\bibliography{libraries-copula}

\appendix 

\section{Appendix -- Proofs} \label{app:proofs}

\subsection*{Proof of Proposition \ref{prop:sklar:like}}

$(i) \Rightarrow (ii)$ has been stated earlier, and follows directly from \citet[Theorem 3.1]{Csiszar75}.

\ppn $(ii) \Rightarrow (iii)$. As $\ggamma_\pp = \Is_\Gamma(\pp)$ and $\Supp(\ggamma_\pp) = \Supp(\pp)$, (\ref{eqn:copprojinvbetas}) holds true. If $\pp= \Is_{\Fs(\bm p^{(1)}, \dots, \bm p^{(d)})}(\ggamma)$, then by the same token:
\begin{equation*} p_{\bm i} = \gamma_{\bm i} \prod_{\ell =1}^d \alpha^{(\ell)}_{i_\ell}  \end{equation*}
for positive constants $\{\alpha^{(\ell)}_1,\ldots,\alpha^{(\ell)}_{r_\ell}\}$ ($\ell = 1,\ldots,d$). Combining this with (\ref{eqn:copprojinvbetas}), we obtain, for all $\bm i \in \Rs$,
\begin{equation*} \gamma_{\pp;\bm i} = \gamma_{\bm i} \prod_{\ell =1}^d \left(\alpha^{(\ell)}_{i_\ell} \beta^{(\ell)}_{i_\ell} \right), \end{equation*}
and $\{\alpha^{(\ell)}_{i_\ell} \beta^{(\ell)}_{i_\ell}\}$ are positive constants. Thus, $\ggamma_\pp$ is the unique $\Is$-projection of $\ggamma$ on $\Gamma$ -- that is, itself -- and we conclude $\ggamma = \ggamma_\pp$.

\ppn $(iii) \Rightarrow (i)$. As $\ggamma_\pp$ is the unique $\Is$-projection of $\pp$ on $\Gamma$, it is necessarily the case that $\Supp{(\ggamma_\pp)} \subseteq \Supp{(\pp)}$ (otherwise $\Is(\ggamma_\pp \| \pp) = \infty$ and $\ggamma_\pp$ would not be the unique $\Is$-projection). Similarly, $\pp$ being the unique $\Is$-projection of $\ggamma_\pp$ on $\Fs(\bm p^{(1)}, \dots, \bm p^{(d)})$ implies $\Supp{(\pp)} \subseteq \Supp{(\ggamma_\pp)}$. Thus, $\Supp{(\ggamma_\pp)} = \Supp{(\pp)}$ and Assertion $(i)$ follows, since $\ggamma_\pp \in \Gamma$. \qed

\subsection*{Proof of Lemma \ref{lem:bij}}

Assume Conditions \ref{cond:Gamma:S}-\ref{cond:Gamma:S:infinite}, and let $\bm \gamma \in \Gamma(\Ss)$. Then, $\vect(\bm \gamma - \ggamma^{(q,\Ss)}) \in \un{\bm \Delta}^\circ(\Ss)$ and there exists $\bm \theta \in \R^{d_\circ}$ such that $A_\Ss \bm \theta = \vect(\bm \gamma - \ggamma^{(q,\Ss)})$ since $\un{\Delta}_1, \dots, \un{\Delta}_{d_\circ}$ is a basis of $\un{\bm \Delta}^\circ(\Ss)$. In other words, there exists a $\bm \theta \in \R^{d_\circ}$ such that $\ggamma(\bm \theta) = \ggamma$. In fact, $\ttheta$ belongs to $\Theta$, as $\ggamma(\ttheta) \in \Gamma(\Ss)$. Hence $\ggamma: \Theta \to \R^{r_1 \times \cdots \times r_d}$ is surjective.	As the matrix $A_\Ss$ has full column rank (since $\un{\Delta}_1, \dots, \un{\Delta}_{d_\circ}$ are linearly independent) with $d_\circ \leq \prod_{\ell = 1}^d r_\ell$ columns, the function from $\R^{d_\circ}$ to $\R^{\prod_{\ell = 1}^d r_\ell}$ defined by $\bm \theta \mapsto A_\Ss \bm \theta$ is injective, which implies that $\ggamma: \Theta \to \R^{r_1 \times \cdots \times r_d}$ is also injective.  \qed

\subsection*{Proof of Proposition \ref{prop:diff}}

As, by definition, all $\pp \in \Ps(\Ss)$ are such that $p_\ii = 0$ for $\ii \in \Rs \backslash \Ss$, the function $\ttheta^*: \un{\Ps}(\Ss) \to \Theta$ is effectively a function of the vector $\bm p_\Ss \doteq \{p_\ii\}_{\ii \in \Ss} \in (0,1)^{|\Ss|}$ only. We make this explicit by defining the function $\widetilde{\ttheta}^*(\bm p_\Ss) \doteq \ttheta^*(\transp{R}_\Ss \bm p_\Ss)$. We can write:
\begin{align*} 
	\widetilde{\ttheta}^*(\bm p_\Ss) & = \arg \inf_{\ttheta \in \Theta} \Is\left(\ggamma(\ttheta) \| \vect^{-1} (\transp{R}_\Ss \bm p_\Ss)\right) = \arg \inf_{\ttheta \in \Theta} \Is\left(\vect^{-1}(\un{\ggamma}(\ttheta)) \| \vect^{-1} (\transp{R}_\Ss \bm p_\Ss)\right) \\
	& = \arg \inf_{\ttheta \in \Theta} \Is\left( \un{\ggamma}(\ttheta) \| \transp{R}_\Ss \bm p_\Ss\right) = \arg \inf_{\ttheta \in \Theta} \Is\left( R_\Ss \un{\ggamma}(\ttheta) \|  \bm p_\Ss\right)
\end{align*} 
where, on the second line, we slightly abuse the notation and let (\ref{eq:KL:divergence}) be applicable as-is to vectors.

\ppn By Lemma \ref{lem:bij}, $\un{\ggamma}(\cdot)$ in (\ref{eq:un:G:vartheta}) is an affine bijection between $\Theta$ and $\un{\Gamma}(\Ss)$. It follows that $R_\Ss \un{\ggamma}(\cdot)$ 
is an affine bijection from $\Theta$ to $\un{\Gamma}_\Ss(\Ss) \doteq \left\{R_\Ss \un{\ggamma}: \un{\ggamma} \in \un{\Gamma}(\Ss) \right\}$. We can now apply Corollary 31 in \cite{Geenens24-diff}, with $\phi(x) = x\log x$ (which corresponds to the $\Is$-divergence (\ref{eq:KL:divergence})); see that $\Theta$ in (\ref{eq:Theta}) is open and defined by linear constraints, while $\un{\Gamma}_\Ss(\Ss) \subset (0,1)^{|\Ss|} \subset [0,1]^{|\Ss|}$ is a convex subset of probability vectors. We conclude that $\widetilde{\ttheta}^*$ is continuously differentiable, with Jacobian matrix at $\bm p_\Ss$ given by
\[ J_{\widetilde{\ttheta}^*}(\bm p_\Ss) = \left[\transp{\widetilde{A}_\Ss} \diag^{-1}\left( R_\Ss \un{\ggamma}(\widetilde{\ttheta}^*(\bm p_\Ss))\right) \widetilde{A}_\Ss\right]^{-1} \transp{\widetilde{A}_\Ss} \diag^{-1}\left( \un{\bm p}_\Ss\right). \]
As $\ttheta^*( \un{\bm p}) = \widetilde{\ttheta}^*(R_\Ss \un{\bm p})$, we get from the chain rule and (\ref{eq:gamma:*}):
\begin{align*} J_{\ttheta^*}(\un{\bm p}) = J_{\widetilde{\ttheta}^*}(R_\Ss \un{\bm p}) R_\Ss & =   \left[\transp{\widetilde{A}_\Ss} \diag^{-1}\left( R_\Ss \un{\ggamma}(\widetilde{\ttheta}^*(R_\Ss \un{\bm p}))\right) \widetilde{A}_\Ss\right]^{-1} \transp{\widetilde{A}_\Ss} \diag^{-1}\left( R_\Ss \un{\bm p}\right) R_\Ss \\ & =  \left[\transp{\widetilde{A}_\Ss} \diag^{-1}\left( R_\Ss \un{\ggamma}^*(\un{\bm p})\right) \widetilde{A}_\Ss\right]^{-1} \transp{\widetilde{A}_\Ss} \diag^{-1}\left( R_\Ss \un{\bm p}\right) R_\Ss. \end{align*}


\subsection*{Proof of Proposition \ref{prop:thetahat}}

$(a)$ As Condition \ref{cond:Gamma:S:infinite} holds for $\Ss_\pp$, then Proposition \ref{prop:diff} guarantees that the function $\ttheta^*: \un{\Ps}(\Ss_\pp) \to \Theta$ in (\ref{eq:vartheta:*}) is continuous. Seeing that $\widehat{\ttheta}_n = \ttheta^*(\widehat{\un{\bm p}}_n)$ while $\ttheta_\pp = \ttheta^*(\un{\bm p})$, and that the operations $\vect$/$\vect^{-1}$ are continuous, the Continuous Mapping Theorem establishes that $\widehat{\ttheta}_n \overset{\text{P}}{\to} \ttheta_\pp$ ($\widehat{\ttheta}_n \overset{\text{as}}{\to} \ttheta_\pp$) as soon as $\widehat{\pp}_n \overset{\text{P}}{\to} \pp$ ($\widehat{\pp}_n \overset{\text{as}}{\to} \pp$) as $n \to \infty$.

\ppn $(b)$ As $\widehat{\pp}_n$ approaches $\pp$ as $n \to \infty$, we can write the following stochastic expansion for $\ttheta^*: \un{\Ps}(\Ss_\pp) \to \Theta$ in (\ref{eq:vartheta:*}):
\begin{equation} \ttheta^*(\widehat{\un{\bm p}}_n) = \ttheta^*(\un{\bm p}) + J_{\ttheta^*}(\un{\bm p})(\widehat{\un{\bm p}}_n - \un{\bm p}) + o_P(\widehat{\un{\bm p}}_n - \un{\bm p}) \label{eqn:stochexptheta} \end{equation}
where $J_{\ttheta^*}(\un{\bm p})$ is the Jacobian matrix given in (\ref{eqn:jactheta}) with $\Ss= \Ss_\pp$. As $\widehat{\ttheta}_n = \ttheta^*(\widehat{\un{\bm p}}_n)$ and $\ttheta_\pp = \ttheta^*(\un{\bm p})$, the claim easily follows from Proposition \ref{prop:diff} and the Delta method \citep[Theorem 3.1]{VanderVaart00}.

\ppn $(c)$ We immediately deduce from (\ref{eqn:stochexptheta}) that $\sqrt{n} \, \left(\widehat{\ttheta}_n - \ttheta_\pp\right) $ is an asymptotically normal vector if $\sqrt{n} \,(\widehat{\un{\bm p}}_n - \un{\bm p})$ is an asymptotically normal vector. Under condition (\ref{eqn:empprop}), $\sqrt{n} \, \left(\widehat{\ttheta}_n - \ttheta_\pp\right) $ is centred at $\bm 0$ and its variance-covariance matrix $\bm \Sigma_\ttheta$ is
\begin{multline*}
	\bm \Sigma_\ttheta = \left[\transp{\widetilde{A}_{\Ss_\pp}} \diag^{-1}\left( R_{\Ss_\pp} \un{\ggamma}(\ttheta_\pp)\right) \widetilde{A}_{\Ss_\pp}\right]^{-1} \transp{\widetilde{A}_{\Ss_\pp}} \diag^{-1}\left( R_{\Ss_\pp} \un{\bm p}\right) R_{\Ss_\pp} \left[ \diag(\un{\bm p}) - \un{\bm p} \, \transp{\un{\bm p}} \right]  \\ \times \transp{R_{\Ss_\pp}} \diag^{-1}\left( R_{\Ss_\pp} \un{\bm p}\right) {\widetilde{A}_{\Ss_\pp}} \left[\transp{\widetilde{A}_{\Ss_\pp}} \diag^{-1}\left( R_{\Ss_\pp} \un{\ggamma}(\ttheta_\pp)\right) \widetilde{A}_{\Ss_\pp}\right]^{-1},   
\end{multline*}
where we have used (\ref{eqn:jactheta}) with $\Ss= \Ss_\pp$. The claim follows from seeing that $R_{\Ss_\pp} \diag(\un{\bm p})  \transp{R_{\Ss_\pp}} = \diag(R_{\Ss_\pp} \un{\bm p})$ and $$\diag^{-1}\left( R_{\Ss_\pp} \un{\bm p}\right) R_{\Ss_\pp}  \un{\bm p} \,  = \uno_{|{\Ss_\pp}|},$$
 where $\uno_{|{\Ss_\pp}|}$ is the vector of length $|{\Ss_\pp}|$ whose entries are all equal to 1; and finally $\transp{\widetilde{A}_{\Ss_\pp}} \uno_{|{\Ss_\pp}|} = \transp{{A}_{\Ss_\pp}} \transp{R}_{\Ss_\pp} \uno_{|{\Ss_\pp}|} = \transp{{A}_{\Ss_\pp}} \uno_{|\Rs|} = \bm 0$ as by definition (\ref{eq:DDelta}), all vectors of $\un{\DDelta}^\circ({\Ss_\pp})$ in (\ref{eq:DDelta:vect}) have entries summing to 0, and so it is the case for the columns $\un{\Delta}_k$ ($k = 1, \ldots, d_\circ$) of $\widetilde{A}_{\Ss_\pp}$.

\section{Appendix -- Exclusive and critical regional dependence ($d=2$)} \label{app:examples}

Refer to Remark \ref {rem:mincond}. An example of $(a)$ when $r_1 = r_2 = 3$ could be a probability array 
\[\pp = \begin{pmatrix} p_{11} & p_{12} & p_{13} \\ p_{21} & 0 & 0 \\ p_{31} & 0 & 0 \end{pmatrix}, \]
where all shown $p_{i_1 i_2}$'s are positive. A copula array $\ggamma \in \Gamma$ with this support would require $\gamma_{21} = \gamma_{31} = 1/3$ for respecting the uniform row  constraints $\gamma_{2 \bullet}  = \gamma_{3 \bullet}  = \frac{1}{3}$, but this would necessarily violate the column constraint $\gamma_{\bullet 1} = \frac{1}{3}$, hence it is impossible ($\Gamma(\Ss_\pp) = \emptyset$). In fact, the support is so constrained that the dependence structure is {\it exclusively} of regional nature; in other words, the shape of the support in itself captures {\it entirely} the dependence structure. A discrete copula cannot be defined for this type of distribution -- {\it because it is not necessary}. Indeed, the support coupled with the margins are enough to identify unequivocally the distribution, as we must have $p_{21} = p_{2\bullet}$, $p_{31}= p_{3\bullet}$, $p_{12} = p_{\bullet 2}$, $p_{13}= p_{\bullet 3}$ and $p_{11} = 1- p_{2\bullet} -p_{3\bullet} - p_{\bullet 2} -p_{\bullet 3} = p_{1\bullet} + p_{\bullet 1} - 1$ (note that, for any $\pp$ with support $\Ss_\pp$, $p_{1\bullet} + p_{\bullet 1}  > 1$). We refer to this case as `{\it exclusive regional dependence}'.

\ppn An example of $(b)$ when $r_1 = r_2 = 3$ could be a probability array 
\[\pp = \begin{pmatrix} p_{11} & p_{12} & p_{13} \\ p_{21} & p_{22} & 0 \\ p_{31} & 0 & 0 \end{pmatrix}, \]
where all shown $p_{i_1 i_2}$'s are positive. A copula array $\ggamma \in \Gamma$ with this support would require $\gamma_{31} = 1/3$ for respecting the uniform row  constraint $\gamma_{3 \bullet}  = \frac{1}{3}$, but this would necessarily force $\gamma_{11}= \gamma_{21} = 0$ for fulfilling $\gamma_{\bullet 1} = \frac{1}{3}$. Hence there is no copula array with support exactly $\Ss_\pp$ ($\Gamma(\Ss_\pp) = \emptyset$). However, if we are willing to accept $\gamma_{11}= \gamma_{21} = 0$ , we may continue the process: $\gamma_{21} = 0$ induces $\gamma_{22} = \frac{1}{3}$ for guaranteeing $\gamma_{2\bullet} = \frac{1}{3}$, which in turn requires $\gamma_{12} = 0$ for allowing $\gamma_{\bullet 2} = \frac{1}{3}$; and finally $\gamma_{13}= \frac{1}{3}$. So, $\Gamma(\Ss') \neq \emptyset$, where $\Ss' = \{(i_1,i_2) \in \{1,2,3\} \times \{1,2,3\}: i_1 = 4 - i_2\} \subset \Ss_\pp$. This is again a case where the support determines entirely the dependence structure; however, here a copula array may still be constructed -- albeit with a strictly smaller support. In fact, such sets $\Ss_\pp$ appear to correspond to critical configurations at which a transition occurs from $\Gamma(\Ss_\pp) \neq \emptyset$ to $\Gamma(\Ss_\pp) = \emptyset$ as one of the positive entries of $\pp$ approaches, and eventually hits, zero. We refer to this case as `{\it critical regional dependence}'.

\section{Appendix -- Example \ref{ex:example2}} \label{app:ex2}

For a $(4 \times 2 \times 2)$-array of support $\Ss_\pp$ as in (\ref{eqn:pex2}), the matrix $C_{\Ss_\pp}$ in (\ref{eq:linsystem}) is

\[
C_{\Ss_\pp} = \left(\begin{array}{*{16}{c}}  
	0 & 1 & 0 & 0 & 0 & 0 & 0 & 0 & 0 & 0 & 0 & 0 & 0 & 0 & 0 & 0 \\
	0 & 0 & 0 & 0 & 0 & 1 & 0 & 0 & 0 & 0 & 0 & 0 & 0 & 0 & 0 & 0 \\
	1 & 1 & 1 & 1 & 1 & 1 & 1 & 1 & 0 & 0 & 0 & 0 & 0 & 0 & 0 & 0 \\
	0 & 0 & 0 & 0 & 0 & 0 & 0 & 0 & 1 & 1 & 1 & 1 & 1 & 1 & 1 & 1 \\
	1 & 1 & 1 & 1 & 0 & 0 & 0 & 0 & 1 & 1 & 1 & 1 & 0 & 0 & 0 & 0 \\
	0 & 0 & 0 & 0 & 1 & 1 & 1 & 1 & 0 & 0 & 0 & 0 & 1 & 1 & 1 & 1 \\
	1 & 0 & 0 & 0 & 1 & 0 & 0 & 0 & 1 & 0 & 0 & 0 & 1 & 0 & 0 & 0 \\
	0 & 1 & 0 & 0 & 0 & 1 & 0 & 0 & 0 & 1 & 0 & 0 & 0 & 1 & 0 & 0 \\
	0 & 0 & 1 & 0 & 0 & 0 & 1 & 0 & 0 & 0 & 1 & 0 & 0 & 0 & 1 & 0 \\
	0 & 0 & 0 & 1 & 0 & 0 & 0 & 1 & 0 & 0 & 0 & 1 & 0 & 0 & 0 & 1
\end{array} \right)
\]
The R procedure {\tt pracma::null} computed on this matrix returns the following matrix $A_{\Ss_\pp}$, whose $d_\circ = 8$ columns form a basis of the null space of $C_{\Ss_\pp}$:
\[
A_{\Ss_\pp} = \left( \begin{array}{*{8}{r}}  
	-0.4590 &  0.1005 &  0.0473 & -0.0418 & -0.1605 &  0.3990 &  0.3458 &  0.2567 \\
	0.0000 &  0.0000 &  0.0000 &  0.0000 &  0.0000 &  0.0000 &  0.0000 &  0.0000 \\
	0.2621 & -0.2955 & -0.1190 &  0.2176 &  0.4466 & -0.1109 &  0.0655 &  0.4021 \\
	-0.1980 & -0.3407 & -0.2871 & -0.4191 & -0.0842 & -0.2269 & -0.1733 & -0.3053 \\
	0.0639 & -0.1063 &  0.2149 &  0.3702 & -0.3323 & -0.5026 & -0.1813 & -0.0260 \\
	0.0000 &  0.0000 &  0.0000 &  0.0000 &  0.0000 &  0.0000 &  0.0000 &  0.0000 \\
	0.0131 &  0.4428 & -0.2971 & -0.0093 & -0.1131 &  0.3166 & -0.4233 & -0.1355 \\
	0.3179 &  0.1992 &  0.4410 & -0.1176 &  0.2435 &  0.1248 &  0.3666 & -0.1920 \\
	0.6713 &  0.0119 & -0.1664 & -0.2284 & -0.1791 &  0.1615 & -0.0167 & -0.0787 \\
	-0.0263 &  0.5089 & -0.0353 & -0.0642 &  0.0745 & -0.3903 &  0.0655 &  0.0366 \\
	-0.1639 & -0.0648 &  0.6728 & -0.1683 & -0.0922 &  0.0069 & -0.2556 & -0.0967 \\
	-0.0862 &  0.0797 & -0.1123 &  0.7042 & -0.0052 &  0.1608 & -0.0312 & -0.2147 \\
	-0.2762 & -0.0060 & -0.0958 & -0.1000 &  0.6719 & -0.0579 & -0.1477 & -0.1520 \\
	0.0263 & -0.5089 &  0.0353 &  0.0642 & -0.0745 &  0.3903 & -0.0655 & -0.0366 \\
	-0.1113 & -0.0826 & -0.2566 & -0.0400 & -0.2413 & -0.2126 &  0.6134 & -0.1699 \\
	-0.0337 &  0.0618 & -0.0417 & -0.1675 & -0.1542 & -0.0587 & -0.1622 &  0.7120
\end{array} \right)
\]
The second and sixth rows of $A_{\Ss_\pp}$ are identically null, reflecting the location of the structural zeros in (\ref{eqn:pex2}).

\end{document}